\documentclass[11pt,a4paper,english]{article}
\pdfoutput=1

\setlength{\textheight}{24cm}
\setlength{\textwidth}{16cm}
\setlength{\voffset}{-1cm}
\setlength{\hoffset}{-0,85cm}

\usepackage[affil-it]{authblk}
\usepackage{tikz}
\usetikzlibrary{decorations.pathreplacing}
\usepackage{amsthm, amsmath, amsfonts, mathtools,mathrsfs,enumerate}
\usepackage{color, verbatim}
\usepackage{todonotes}
\usepackage{graphicx}
\usepackage[hidelinks]{hyperref}

\newtheorem{thm}{Theorem}[section]
\newtheorem{cor}[thm]{Corollary}
\newtheorem{lem}[thm]{Lemma}

\theoremstyle{definition}
\newtheorem{defn}[thm]{Definition}
\newtheorem{assumption}[thm]{Assumption}
\theoremstyle{remark}
\newtheorem{rem}[thm]{Remark} 

\DeclareMathOperator*{\essinf}{ess\,inf}
\DeclareMathOperator*{\esssup}{ess\,sup}
\DeclareMathOperator*{\argmin}{arg\,min}
\DeclareMathOperator*{\argmax}{arg\,max}
\newcommand{\ie}{\textit{i.e.\ }}

\newcommand{\bbF}{\mathbb F}
\newcommand{\bbG}{\mathbb G}
\newcommand{\bbN}{\mathbb N}
\newcommand{\bbT}{\mathbb T}
\newcommand{\mcF}{\mathcal F}
\newcommand{\mcG}{\mathcal G}

\newcommand{\mcI}{\mathcal I}
\newcommand{\mcL}{\mathcal L}

\newcommand{\mcU}{\mathcal U}

\newcommand{\Prob}{\mathbb{P}}
\newcommand{\ett}{\mathbf{1}}

\newcommand{\R}{\mathbb R}
\newcommand{\EE}{\mathbb E}
\def\PP{\mathbb{P}}
\usepackage[ruled,linesnumbered]{algorithm2e}
\newcommand{\transp}{{\scriptscriptstyle\mathsf{T}}}

\newcommand{\msc}[1]{\textbf{MSC2010 Classification:} #1.}
\newcommand{\jel}[1]{\textbf{JEL Classification:} #1.}
\newcommand{\keywords}[1]{\textbf{Key words:} #1.}

\begin{document}

\title{Discrete-time risk-aware optimal switching with non-adapted costs}


\author[1]{Randall Martyr}
\author[1]{John Moriarty\thanks{Corresponding author. Email: j.moriarty@qmul.ac.uk}}
\author[2]{Magnus Perninge}

\affil[1]{School of Mathematical Sciences, Queen Mary University of London, Mile End Road, London E1 4NS, United Kingdom}
          
\affil[2]{Department of Physics and Electrical Engineering, Linnaeus University,
391 82 Kalmar, 351 95 V\"axj\"o, Sweden}

\date{\today}

\maketitle

\begin{abstract}
We solve non-Markovian optimal switching problems in discrete time on an infinite horizon, when the decision maker is risk aware and the filtration is general, and establish existence and uniqueness of solutions for the associated reflected backward stochastic difference equations. An example application to hydropower planning is provided.

\vspace{+4pt}
\keywords{infinite horizon, optimal switching, risk measures, reflected backward stochastic difference equations, hydropower planning}
\vspace{+4pt}

\msc{60G40, 91B08, 49N30}
\vspace{+4pt}

\jel{C61, D81}
\end{abstract}

	\section{Introduction}

\subsection{Optimal switching problems}

Optimal switching problems involve an agent controlling a system by successively switching an operational mode between a discrete set of choices. Time may be either continuous or discrete, and in all cases the latter is useful for numerical work (see for example \cite{Ludkowski2010}). In related contexts, risk sensitivity with respect to uncertain costs has been modelled using nonlinear expectations, see \cite{An2013} for example. This feature is particularly appropriate in data-driven settings where models themselves may be uncertain. Examples include when the probability model is derived from numerical weather predictions depending on unknown physical parameters, or, alternatively, in model-free reinforcement learning. In the latter context, recent work has applied a general analytic framework for risk sensitivity \cite{Kose20}.

Taking a probabilistic approach, in this paper we consider a general filtration, which interacts with the nonlinear expectation. More precisely, let $\mathbb{T}$ be a subset of $\bbN_0 \coloneqq \{0,1,\ldots\}$ and $\{\tilde g_{\xi_{t-1},\xi_t}(t)\}_{t \in \mathbb{T}}$ be a sequence of random costs dependent on a switching strategy $\xi$, \ie a random sequence $(\xi_t)_{t \in \{-1\}\cup\bbT}$ taking values in a finite set $\mcI\coloneqq \{1,\ldots,m\}$, representing the set of operating modes. We do not require that every cost is observable which, for example, enables study of the interaction between delayed or missing observations and risk sensitivity. The time horizon is either infinite ($\bbT=\bbN_0$) or finite ($\bbT=\{0,1,\ldots,T\}$ for some finite $T\geq 0$) and the value of the switching problem is defined under a nonlinear expectation (cf. equations \eqref{ekv:optSwitch} and \eqref{eq:value-function-infinite-horizon} below).
Optimal stopping problems (see, for example, \cite{An2013}) are recovered in the special case of two modes (\ie $m=2$), when optimisation is performed over strategies $\xi$ with a single jump.

\subsection{Setup and related work}\label{sec:lr}

We have a probability space $(\Omega,\mathcal{F},\mathbb{P})$ and a filtration $\mathbb{G} = \{\mathcal{G}_{t}\}_{t \in \mathbb{T}}$ of sub-$\sigma$-algebras of $\mathcal{F}$. Given operating modes $\mathcal{I}\coloneqq\{1,\ldots,m\}$ and  essentially bounded random variables $g = \{g_{i}(t) \colon i \in \mathcal{I}\}_{t \in \mathbb{T}}$ and $c = \{c_{i,j}(t) \colon i,j \in \mathcal{I}\}_{t \in \mathbb{T}}$ on $(\Omega,\mathcal{F},\mathbb{P})$, we are interested in solving an optimal switching problem with running costs $g$ and switching costs $c$ when the information available to the decision maker is given progressively according to $\mathbb{G}$, and where a dynamic measure of risk sensitivity is used which generalises the usual sequence $\{\mathbb{E}[\cdot \vert \mathcal{G}_{t}]$, $t \in \mathbb{T}$\} of conditional expectations with respect to $\mathbb{G}$. For the following discussion we set
\begin{align}\label{eq:defcosts}
	\tilde{g}_{\xi_{t-1},\xi_t}(t) \coloneqq g_{\xi_t}(t) + c_{\xi_{t-1},\xi_t}(t).	
\end{align}
Note that we are considering a setting where each of the costs $\{\tilde g_{i,j}(t) \colon i,j \in\mcI\}_{t\in\mathbb T}$ is measurable with respect to the $\sigma$-algebra $\mcF$ and $\mathbb G$ is any filtration with $\mcG_t\subset \mcF$ for all $t\in\bbT$. We thus may have, but do not limit ourselves to, the situation where $\mathbb G$ is the natural filtration generated by $\{\tilde g_{i,j}(t) \colon i,j \in\mcI\}_{t\in\mathbb T}$. To our knowledge, the necessary and sufficient conditions we provide for an optimal switching strategy in this infinite-horizon setting under general filtration are novel and extend results in, for example, \cite{An2013,Cheridito2006,Follmer2016,Kratschmer2010,Pichler2018}.

The rest of the paper is structured as follows. Section \ref{sec:Non-Markovian-Optimal-Stopping} presents our main results in the finite-horizon setting, and these are extended to infinite horizon in Section~\ref{sec:infinite-horizon}. In both cases, the solution to the optimal switching problem is used to establish the existence of solutions to the associated reflected backward stochastic difference equations, and we also prove uniqueness of the solution. We close the paper with two examples. Section~\ref{sec:num-ex} briefly confirms that the approach taken to missing or delayed observations is capable of changing both the value process and optimal strategy. In Section~\ref{sec:num-ex-hydro-planning} we apply neural networks to obtain numerical solutions to a non-Markovian hydropower planning problem with non-adapted costs and examine the risk sensitivity of the solutions.

\section{Finite-horizon risk-aware optimal switching under general filtration}\label{sec:Non-Markovian-Optimal-Stopping}

In the following we let
\begin{itemize}
	\item $m\mcF$ denote the space of random variables on $(\Omega,\mathcal{F},\mathbb{P})$,
	\item $L^{\infty}_{\mathcal{F}}$ the subspace of essentially bounded random variables on $(\Omega,\mathcal{F},\mathbb{P})$,
	\item $\mathbb{G} = \{\mathcal{G}_{t}\}_{t \in \mathbb{T}}$ be a filtration, with $\mathcal{G} = \bigvee_{t \in \mathbb{T}}\mathcal{G}_{t}$ the $\sigma$-algebra generated by all $\mathcal{G}_{t}$ and $\mcG\subset\mcF$,
	\item $T<\infty$ be a finite time horizon and for $0 \le t \le T$, let $\mathscr{T}_{[t,T]}$ (resp. $\mathscr{T}_{t}$) denote the set of $\mathbb{G}$-stopping times with values in $t,\ldots,T$ (resp. $t,t+1,\ldots$), 	
	\item $\rho$ be a $\mathbb{G}$-conditional risk mapping:  a family of mappings $\{\rho_{t}\}_{t \in \mathbb{T}}$, $\rho_{t} \colon L^{\infty}_{\mathcal{F}} \to L^{\infty}_{\mathcal{G}_{t}}$, satisfying normalisation, conditional translation invariance, and monotonicity (see Appendix \ref{sec:conditional-risk-mapping}),
	\item for $s, t \in \mathbb{T}$ with $s \leq t$, let $\rho_{s,t}$ be the finite-horizon aggregated (or nested) risk mapping generated by $\rho$  (\cite{CHERIDITO2011,PichlerSchlotter2020,Pichler2018,ShenStannatObermayer2013,Ugurlu2018}, see also \cite{bauerle2018stochastic}): that is, $\rho_{t,t}(W_{t}) = \rho_{t}(W_{t})$ and 	
	\[
	\begin{split}
		\rho_{s,t}(W_{s},\ldots,W_{t}) =
		\rho_{s}\bigg(W_{s} & + \rho_{s+1}\Big(W_{s+1} + \cdots + \\
		& \qquad \qquad \rho_{t-1}\big(W_{t-1} + \rho_{t}(W_{t})\big)\cdots \Big)\bigg),\; s < t,
	\end{split}
	\]
	\item all inequalities be interpreted in the $\PP$-almost sure sense,
\end{itemize}
and for the finite time-horizon setting of Section \ref{sec:Non-Markovian-Optimal-Stopping} we also set $\mathbb{T} = \{0,1,\ldots,T\}$.

The value process for the finite-horizon optimal switching problem is
\begin{equation}\label{ekv:optSwitch}
	V_t^{i} \coloneqq \essinf_{\xi\in\mcU^i_t}\rho_{t,T}(\tilde g_{\xi_{t-1},\xi_t}(t),\ldots,\tilde g_{\xi_{T-1},\xi_{T}}(T)),
\end{equation}
where $\tilde g_{i,j}(t) \coloneqq g_j(t)+c_{i,j}(t)$ and $\mcU^i_t$ is the set of strategies $\xi$ with $\xi_{t-1} = i$ and the infimum of the empty set is taken to be $\infty$.
Since for each $t$ the costs $c_{i,i}(t)$ depend only on $i$ and may therefore be accounted for in the term $g_{i,i}(t)$, without loss of generality we may make the assumption
\begin{assumption}
	For all $i \in \mathcal{I}$ we have $c_{i,i}(t) = 0$ for all $t \in \mathbb{T}$.
\end{assumption}

\subsection{Dynamic programming equations}

The use of aggregated risk mappings provides sufficient structure for dynamic programming.
In our non-Markovian setting appropriate equations are:
\begin{align}\label{ekv:VFrec}
	\begin{cases}
		\hat V_T^{i}=\min_{j\in\mcI} \rho_T(\tilde g_{i,j}(T)),& {}\\
		\hat V_t^{i}=\min_{j\in\mcI} \rho_t(\tilde g_{i,j}(t)+\hat V^{j}_{t+1}),& \text{for} \: 0\leq t<T,
	\end{cases}
\end{align}
(the random fields $\{\hat V_t^{i}: i\in\mcI, t\in \mathbb{T}\}$ coincide with Snell envelopes, see Remark \ref{rem:osf}). We note by induction that $\hat V_t^{i} \in L^\infty_{\mcG_t}$ for each $i\in\mcI$ and $t \in \mathbb{T}$.

\begin{rem}
	For comparison, in a Markovian framework with full observation and the linear expectation, randomness stems from an $\mathbb{R}^{k}$-valued Markov chain $X^{s,x} \coloneqq \{X_t^{s,x}\}_{s \le t \le T}$, where $(s,x) \in \mathbb{T} \times \mathbb{R}^{k}$ is fixed and $X_r^{s,x}=x$ for $0 \le r \le s$ almost surely under $\mathbb{P}^{(s,x)}$, and $\mathbb{G}$ is the natural filtration of $X^{s,x}$.  In the Markovian case, by virtue of each strategy $\xi$ being adapted to $\mathbb{G}$, for every $t \ge 0$ there exists a function $\Xi_{t} \colon (\mathbb{R}^{k})^{t+1} \to \mathcal{I}$ such that $\xi_{t} = \Xi_{t}(X_{0},\ldots,X_{t})$. The Bellman equation is then the appropriate formulation for dynamic programming: For any $i \in \mcI$ and $(s,x) \in \mathbb{T} \times \mathbb{R}^{k}$,
	\begin{equation} \label{mainsyst-vi}
		\begin{cases}
			v^{i}(T,x) = \min_{j\in\mcI} \tilde{g}_{i,j}(T,x), \\
			v^{i}(s,x) = \min_{j\in\mcI } (\tilde{g}_{i,j}(s,x)+\EE^{(s,x)}[v^{j}(s+1,X_{s+1})]),\;\; s < T, \,
		\end{cases}
	\end{equation} where the $v^i$ and $\tilde{g}_{i,j}$ are deterministic functions on $\mathbb{T} \times \mathbb{R}^{k}$.
\end{rem}

\begin{thm}\label{thm:vrfctn}
	The random field $\{\hat V_t^{i}: i\in\mcI, t\in \mathbb{T}\}$ consists of value processes for the optimal switching problem, in the sense that
	\[
	\hat V_t^{i}=V_t^{i} \quad \forall\, i \in \mcI, t\in\bbT.
	\]
	Moreover, starting from any $0 \le t \le T$ and $i \in \mathcal{I}$, an optimal strategy $\xi^{*} \in \mcU^{i}_t$ can be defined as follows:
	\begin{equation}\label{eq:select}
		\begin{cases}
			\xi_{t-1}^* = i, \\
			\xi_s^* \in {\arg\min}_{j \in \mcI}\rho_s(\tilde g_{\xi_{s-1}^*,j}(s)+\hat V^{j}_{s+1}), \quad t \le s < T,\\
			\xi_T^* \in {\arg\min}_{j \in \mcI}\rho_T(\tilde g_{\xi_{T-1}^*,j}(T)).
		\end{cases}
	\end{equation}
\end{thm}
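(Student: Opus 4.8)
The plan is to establish the identity $V_t^i = \hat V_t^i$ by backward induction on $t$, proving separately the lower bound $V_t^i \ge \hat V_t^i$ (which will hold for \emph{every} admissible strategy) and the matching upper bound $V_t^i \le \hat V_t^i$ (attained by the greedy strategy $\xi^*$). The single structural fact driving both halves is the recursive (tower) property of the aggregated risk mapping, $\rho_{t,T}(W_t,\ldots,W_T) = \rho_t\big(W_t + \rho_{t+1,T}(W_{t+1},\ldots,W_T)\big)$, which lets the $(T-t)$-stage problem be peeled back one step at a time and compared against the Bellman-type recursion \eqref{ekv:VFrec}. Throughout, the inductive hypothesis at time $t+1$ is the already-proved equality $V_{t+1}^{j}=\hat V_{t+1}^{j}$ for all $j\in\mcI$.

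For the lower bound I would fix an arbitrary $\xi \in \mcU^i_t$ and show $\rho_{t,T}(\tilde g_{\xi_{t-1},\xi_t}(t),\ldots,\tilde g_{\xi_{T-1},\xi_T}(T)) \ge \hat V_t^i$. Rewriting the left-hand side as $\rho_t\big(\tilde g_{i,\xi_t}(t) + \rho_{t+1,T}(\cdots)\big)$ via the tower property, the induction hypothesis applied to the tail of $\xi$ (which is a strategy started from mode $\xi_t$ at time $t+1$) gives $\rho_{t+1,T}(\cdots) \ge \hat V_{t+1}^{\xi_t}$; monotonicity of $\rho_t$ then produces the lower bound $\rho_t\big(\tilde g_{i,\xi_t}(t) + \hat V_{t+1}^{\xi_t}\big)$, and comparing with the finite minimum in \eqref{ekv:VFrec} yields $\hat V_t^i$. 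Since $\hat V_t^i$ thereby bounds the whole family from below, it bounds its essential infimum, whence $V_t^i \ge \hat V_t^i$.

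For the upper bound I would verify that the strategy $\xi^*$ of \eqref{eq:select} is admissible and achieves $\rho_{t,T}(\cdots) = \hat V_t^i$. Admissibility requires a measurable selection of the minimiser; because $\mcI$ is finite, choosing (for instance) the smallest minimising index on each cell of the induced $\mcG_s$-measurable partition of $\Omega$ makes $\xi_s^*$ a $\mcG_s$-measurable, hence $\mathbb G$-adapted, strategy in $\mcU^i_t$. A second backward induction establishing $\rho_{s,T}(\cdots) = \hat V_s^{\xi_{s-1}^*}$ for $s = T, T-1, \ldots, t$ then collapses the nested mapping: the base case is the defining equation for $\hat V_T$, and the inductive step combines the tower property with the fact that $\xi_s^*$ realises the minimum in \eqref{ekv:VFrec}. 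Evaluating at $s=t$ gives $\rho_{t,T}(\xi^*) = \hat V_t^i$, so $V_t^i \le \rho_{t,T}(\xi^*) = \hat V_t^i$; together with the previous paragraph this proves the equality and, since the infimum is attained at $\xi^*$, the claimed optimality.

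The step I expect to be delicate is reconciling the essential infimum with the pointwise-in-$j$ recursion: the mode $\xi_t$ appearing in the lower bound is a $\mcG_t$-measurable \emph{random} index rather than a fixed element of $\mcI$, so passing from the stagewise bound $\rho_{t+1,T}(\cdots) \ge \hat V_{t+1}^{j}$, valid on $\{\xi_t = j\}$, to the pointwise inequality $\rho_{t+1,T}(\cdots) \ge \hat V_{t+1}^{\xi_t}$ relies on a locality (regularity) property of the conditional risk mappings, namely $\ett_A \rho_t(X) = \ett_A \rho_t(\ett_A X + \ett_{A^c}Y)$ for $A \in \mcG_t$, which must be extracted from the axioms of Appendix \ref{sec:conditional-risk-mapping} (monotonicity together with conditional translation invariance). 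The same locality, combined with stability of $\mcU^i_t$ under pasting strategies across disjoint $\mcG_t$-measurable events, is what legitimises treating the tail of $\xi$ on each $\{\xi_t = j\}$ as an admissible strategy started from mode $j$.
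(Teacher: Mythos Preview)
Your proposal is correct and follows essentially the same approach as the paper: backward induction on $t$, with the two inequalities $\hat V_t^i \le V_t^i$ (every strategy's cost is bounded below by $\hat V_t^i$) and $\hat V_t^i \ge V_t^i$ (the greedy strategy $\xi^*$ attains $\hat V_t^i$) proved separately via the recursive/tower property of $\rho_{s,t}$ and monotonicity. The delicate step you flag---handling the random index $\xi_t$---is exactly what the paper resolves by the explicit decomposition $\sum_{j=1}^m \ett_{\{\xi'_t=j\}}\rho_t(\tilde g_{i,j}(t)+\rho_{t+1,T}(\cdots)) = \rho_t\big(\sum_{j=1}^m \ett_{\{\xi'_t=j\}}\{\cdots\}\big)$, using conditional locality (which, as you note, follows from the axioms in Appendix~\ref{sec:conditional-risk-mapping}); your anticipation of this point and of the measurable-selection issue for $\xi^*$ is accurate.
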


\begin{proof}
	Note that the result holds trivially for $t=T$. We will apply a backward induction argument and assume that for $s=t+1,t+2,\ldots,T$ and all $i\in\mcI$ we have $\hat V_s^{i}=V_s^{i}= \rho_{s,T}\left(\tilde g_{i,\xi_{s}}(s),\ldots,\tilde g_{\xi_{T-1},\xi_{T}}(T)\right)$, where $\xi_{t-1}=i$ and
	\[
	\xi_s\in {\arg\min}_{j\in\mcI}\rho_s(\tilde g_{\xi_{s-1},j}(s)+ V^{j}_{s+1}),
	\]
	with $V^{j}_{T+1} \coloneqq 0$ for all $j\in\mcI$.
	
	The induction hypothesis implies that
	\begin{align*}
		\hat V_t^{i}&= 	\min_{j\in\mcI} \rho_t(\tilde g_{i,j}(t)+ V^{j}_{t+1})
		\\
		&=\min_{j\in\mcI} \rho_t(\tilde g_{i,j}(t)+\essinf_{\xi\in\mcU^j_{t+1}}\rho_{t+1,T}(\tilde g_{j,\xi_{t+1}}(t+1),\ldots,\tilde g_{\xi_{T-1},\xi_{T}}(T))).
	\end{align*}
	For any $\xi'\in \mcU^i_{t}$ we note that by monotonicity and conditional translation invariance we have
	\begin{align*}
		\hat V_t^{i}&\leq\min_{j\in\mcI} \rho_t(\tilde g_{i,j}(t)+\rho_{t+1,T}(\tilde g_{j,\xi'_{t+1}}(t+1),\ldots,\tilde g_{\xi'_{T-1},\xi'_{T}}(T)))
		\\
		&\leq \sum_{j=1}^m \ett_{\{\xi'_t=j\}}\rho_t(\tilde g_{i,j}(t)+\rho_{t+1,T}(\tilde g_{j,\xi'_{t+1}}(t+1),\ldots,\tilde g_{\xi'_{T-1},\xi'_{T}}(T)))
		\\
		&=\rho_t\left(\sum_{j=1}^m\ett_{\{\xi'_t=j\}}\{\tilde g_{i,j}(t)+\rho_{t+1,T}(\tilde g_{j,\xi'_{t+1}}(t+1),\ldots,\tilde g_{\xi'_{T-1},\xi'_{T}}(T))\}\right)
		\\
		&=\rho_{t,T}(\tilde g_{i,\xi'_t}(t),\ldots,\tilde g_{\xi'_{T-1},\xi'_{T}}(T)).
	\end{align*}
	Taking the infimum over all $\xi'\in\mcU^i_{t}$ we conclude that $\hat V_t^{i}\leq V_t^{i}$. However, letting $\xi'_{t-1}=i$ and defining
	\[
	\xi'_s\in {\arg\min}_{j\in\mcI}\rho_t(\tilde g_{\xi'_{s-1},j}(s)+\hat V^{j}_{s+1}),
	\]
	for $s=t,\ldots,T$ with $\hat V^{j}_{T+1} \coloneqq 0$ for all $j\in\mcI$, we find that
	\begin{align*}
		\hat V_t^{i}&= \rho_t(\tilde g_{i,\xi'_t}(t)+\essinf_{\xi\in\mcU_{t+1}}\rho_{t+1,T}(\tilde g_{\xi'_t,\xi_{t+1}}(t+1),\ldots,\tilde g_{\xi_{T-1},\xi_{T}}(T)))
		\\
		&=\rho_{t,T}(\tilde g_{i,\xi'_t}(t),\ldots,\tilde g_{\xi'_{T-1},\xi'_{T}})
		\\
		&\geq V^i_t.
	\end{align*}
\end{proof}


\subsection{Relation to systems of RBS$\Delta$Es}

We now introduce a {\it reflected backward stochastic difference equation} (RBS$\Delta$E), which is a class of equations relevant to both optimal stopping and switching problems and studied systematically in \cite{An2013} for finite-state processes. Let $\mcL^{\infty}_{\bbG,T} \coloneqq \otimes_{t = 0}^{T}L^\infty_{\mcG_t}$ and, to avoid excessive notation, some notation for scalar-valued processes will be reused for vector-valued ones with the interpretation that all components are in the same space. Similarly, inequalities and martingale properties will be understood component-wise, and given $i \in \mathcal{I}$ we write $\mcI^{-i} \coloneqq \mcI \setminus\{i\}$.

\begin{defn}[Finite horizon RBS$\Delta$Es]\label{def:RBSDE}
	With $\bbT=\{0,\ldots,T\}$, where $0\leq T<\infty$, let $Y = \{Y_{t}\}_{t\in\bbT}$, $M = \{M_{t}\}_{t\in\bbT}$ and $A = \{A_{t}\}_{t\in\bbT}$ be $\mathbb{G}$-adapted $\R^m$-valued processes satisfying:
	\begin{equation}\label{ekv:rbsde}
		\begin{cases}
			Y^i_t = \min_{j\in\mcI}\rho_T(\tilde g_{i,j}(T))+\sum_{s=t}^{T-1}\rho_s(g_{i}(s)+\Delta M^i_{s+1})
			\\
			\qquad-(M^i_T-M^i_t) -(A^i_{T}-A^i_t),\quad\forall \;    t \in \mathbb{T},\\
			Y^i_t \leq  \min_{j\in\mcI^{-i}} \rho_t(\tilde g_{i,j}(t)+Y^j_{t+1}),\quad\forall \;    t \in \mathbb{T}, \\
			\sum_{t=0}^{T-1}\big(Y^i_t - \min_{j\in\mcI^{-i}} \rho_t(\tilde g_{i,j}(t)+Y^j_{t+1})\big)\Delta A^i_{t+1}=0.
		\end{cases}
	\end{equation}
	A triple $(Y,M,A)\in (\mcL^{\infty}_{\bbG,T})^3$ is said to be a solution to the system of RBS$\Delta$Es \eqref{ekv:rbsde} if M is a $\bbG$-adapted $\rho_{s,t}$-martingale (applying the definition in Section~\ref{sec:Aggregated-Martingales} of the appendix), $A$ is non-decreasing and $\bbG$-predictable (with $M_0=A_0=0$) and $(Y,M,A)$ satisfies \eqref{ekv:rbsde}. A solution $(Y,M,A)$ is called unique if any other solution $(Y',M',A')$ is indistinguishable as a process from $(Y,M,A)$.
\end{defn}

\begin{rem}
	The martingale characterisation of the optimal switching value process (see for example \cite{rieder1976optimal} under the linear expectation) may be derived from the associated RBS$\Delta$E. Under a risk mapping $\rho$, however, the ``driver'' $\rho_{t}\big(g_i(t) + \Delta M^i_{t+1}\big)$ in \eqref{ekv:rbsde} depends on the $\{\rho_{s,t}\}$-martingale difference $\Delta M^i_{t+1}$, which is natural for general (infinite state) backward stochastic difference equations -- see \cite{Cohen2011}. Note also that the driver is a function of the mappings $\omega \mapsto \Delta M^{i}_{t+1}(\omega)$ and $\omega \mapsto g_{i}(\omega,t)$ and not the realised values of these random variables. Also, we refer to the last line in equation \eqref{ekv:rbsde} as the {\it Skorokhod condition}.
\end{rem}

The optimal switching problem \eqref{ekv:optSwitch} is related to this system of reflected backward stochastic difference equations through the following result:

\begin{thm}\label{thm:rbsde}
	The system of RBS$\Delta$Es \eqref{ekv:rbsde} has a unique solution $(Y,M,A)$. Furthermore, we have $Y=V$.
\end{thm}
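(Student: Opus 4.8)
The plan is to prove existence by an explicit construction built on Theorem \ref{thm:vrfctn}, and then to prove uniqueness by a backward induction that pins down the three components one at a time. For existence I would set $Y \coloneqq \hat V = V$ and, recalling that by Assumption the continuation option $j=i$ contributes $\tilde g_{i,i}(t) = g_i(t)$, define the increments
\[
\Delta M^i_{t+1} \coloneqq \hat V^i_{t+1} - \rho_t(\hat V^i_{t+1}), \qquad \Delta A^i_{t+1} \coloneqq \rho_t(g_i(t) + \hat V^i_{t+1}) - \hat V^i_t,
\]
with $M^i_0 = A^i_0 = 0$. The two guiding principles are that $M$ should be the $\rho$-martingale part of $\hat V$ and that $\Delta A$ should record the nonnegative gap between the continuation value and $\hat V^i_t$.

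For the verification I would first check the structural requirements. Since $\rho_t(\hat V^i_{t+1}) \in L^\infty_{\mcG_t}$, each $\Delta M^i_{t+1}$ is $\mcG_{t+1}$-measurable and, by conditional translation invariance, $\rho_t(\Delta M^i_{t+1}) = \rho_t(\hat V^i_{t+1}) - \rho_t(\hat V^i_{t+1}) = 0$, so $M$ is a $\rho$-martingale. Each $\Delta A^i_{t+1}$ lies in $L^\infty_{\mcG_t}$ (hence $A$ is predictable) and is nonnegative because $\hat V^i_t = \min_{j\in\mcI}\rho_t(\tilde g_{i,j}(t) + \hat V^j_{t+1}) \le \rho_t(g_i(t)+\hat V^i_{t+1})$ by \eqref{ekv:VFrec}. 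Applying conditional translation invariance once more gives the one-step identity $\rho_t(g_i(t) + \Delta M^i_{t+1}) - \Delta M^i_{t+1} + \hat V^i_{t+1} = \rho_t(g_i(t)+\hat V^i_{t+1})$, so that $\hat V^i_t = \rho_t(g_i(t)+\Delta M^i_{t+1}) - \Delta M^i_{t+1} + \hat V^i_{t+1} - \Delta A^i_{t+1}$; telescoping this from $t$ to $T-1$ and using $\hat V^i_T = \min_{j}\rho_T(\tilde g_{i,j}(T))$ yields the first line of \eqref{ekv:rbsde}. The reflection inequality is immediate from \eqref{ekv:VFrec}, and for the Skorokhod condition I would argue pointwise: on $\{\Delta A^i_{t+1} > 0\}$ the continuation value strictly exceeds $\hat V^i_t$, forcing $\hat V^i_t = \min_{j\in\mcI^{-i}}\rho_t(\tilde g_{i,j}(t)+\hat V^j_{t+1})$, so every summand vanishes.

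For uniqueness I would take a second solution $(Y',M',A')$ and prove by backward induction on $t$ that $Y'_t = \hat V_t$, $\Delta M'_{t+1} = \Delta M_{t+1}$ and $\Delta A'_{t+1} = \Delta A_{t+1}$; the base case $t=T$ is immediate from the first line of \eqref{ekv:rbsde}. For the inductive step, assuming $Y'_{t+1} = \hat V_{t+1}$, the crucial observation is that $\rho_t(X) = X$ holds if and only if $X \in L^\infty_{\mcG_t}$ (the codomain of $\rho_t$ is $L^\infty_{\mcG_t}$, and the converse is translation invariance with normalisation). Applying $\rho_t$ to the $\mcG_t$-measurable quantity $Y'^i_t$ in the one-step form of the first equation and comparing with that equation forces $\rho_t(\hat V^i_{t+1} - \Delta M'^i_{t+1}) = \hat V^i_{t+1} - \Delta M'^i_{t+1}$; hence $\hat V^i_{t+1} - \Delta M'^i_{t+1}$ is $\mcG_t$-measurable, and the $\rho$-martingale property $\rho_t(\Delta M'^i_{t+1}) = 0$ then identifies $\Delta M'^i_{t+1} = \hat V^i_{t+1} - \rho_t(\hat V^i_{t+1}) = \Delta M^i_{t+1}$. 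Substituting back reduces the first equation to $Y'^i_t = \rho_t(g_i(t)+\hat V^i_{t+1}) - \Delta A'^i_{t+1}$, and a case analysis on $\{\Delta A'^i_{t+1} = 0\}$ versus $\{\Delta A'^i_{t+1} > 0\}$, using the reflection inequality and the termwise Skorokhod condition, yields $Y'^i_t = \hat V^i_t$ and thus $\Delta A'^i_{t+1} = \Delta A^i_{t+1}$. Since $M'_0 = A'_0 = 0$, matching increments give $M' = M$ and $A' = A$, completing the proof that $Y = V$. I expect the main obstacle to be precisely this disentangling of $M'$ and $A'$ from the single identity in which they appear together; the key that unlocks it is the equivalence $\rho_t(X) = X \Leftrightarrow X \in L^\infty_{\mcG_t}$ combined with the $\rho$-martingale normalisation, which removes the nonlinearity of $\rho_t$ at exactly the right place.
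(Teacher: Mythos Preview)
Your existence argument is the same construction as the paper's: set $Y=\hat V$, take $\Delta M^i_{t+1}=\hat V^i_{t+1}-\rho_t(\hat V^i_{t+1})$ and $\Delta A^i_{t+1}=\rho_t(g_i(t)+\hat V^i_{t+1})-\hat V^i_t$, and verify the three lines of \eqref{ekv:rbsde} using \eqref{ekv:VFrec}. Nothing to add there.

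For uniqueness the two proofs diverge. The paper first shows (exactly as you do) that for any solution $(Y,N,B)$ one has $\Delta N^i_{t+1}=Y^i_{t+1}-\rho_t(Y^i_{t+1})$ and $\Delta B^i_{t+1}=\rho_t(g_i(t)+Y^i_{t+1})-Y^i_t$; it then proves $Y^i_t\le V^i_t$ by backward induction, and for the reverse inequality it \emph{constructs a switching strategy} from $(Y,N,B)$ via the stopping times $\bar\tau^{t,i}_k=\inf\{s>\bar\tau^{t,i}_{k-1}:\Delta B_{s+1}>0\}\wedge T$ and the Skorokhod condition, obtaining $Y^i_t=\rho_{t,T}(\tilde g_{i,\bar\xi_t},\ldots)\ge V^i_t$. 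Your route is more direct: you run a single backward induction on the hypothesis $Y'_{t+1}=\hat V_{t+1}$, use the equivalence $\rho_t(X)=X\Leftrightarrow X\in L^\infty_{\mcG_t}$ together with $\rho_t(\Delta M'^i_{t+1})=0$ to pin down $\Delta M'^i_{t+1}$, reduce to $Y'^i_t=\rho_t(g_i(t)+\hat V^i_{t+1})-\Delta A'^i_{t+1}$, and finish with the case split on $\{\Delta A'^i_{t+1}=0\}$ versus $\{\Delta A'^i_{t+1}>0\}$. This works and is cleaner for bare uniqueness. What the paper's longer argument buys is the explicit representation of $Y^i_t$ as the aggregated risk along a concrete strategy built from the RBS$\Delta$E data, which is reused immediately afterwards in Corollary~\ref{cor:sufficient} to characterise optimal strategies; your proof does not produce that byproduct, but it is not needed for the theorem as stated.
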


\begin{proof}
	We divide the proof into two parts:\\
	
	\emph{Existence:} We aim to find a family of $\rho_{s,t}$-martingales $M = \{M^{i}\}_{i \in \mathcal{I}}$ and non-decreasing $\bbG$-predictable processes $A = \{A^{i}\}_{i \in \mathcal{I}}$ such that $(V,M,A)$ solves \eqref{ekv:rbsde}. For every $i \in \mathcal{I}$ define the sequence $\{A^i_t\}_{t=0}^{T}$ by
	\[
	\begin{cases}
		A^i_0=0,\\
		A^i_{t}= A^i_{t-1}+\rho_{t-1}(g_i(t-1)+V^i_t)-V^i_{t-1}, \quad t = 1,\ldots,T.
	\end{cases}
	\]
	We note that $A^i$ is $\bbG$-predictable and non-decreasing since, by Theorem \ref{thm:vrfctn} and the backward induction formula \eqref{ekv:VFrec}, $V_t^{i}\leq \rho_t(g_{i}(t)+V^{i}_{t+1})$. Furthermore, for $t < T$ we have $\Delta A^i_{t+1}=\rho_{t}(g_i(t)+V^i_{t+1})-V^i_{t}=0$ on $\{V^i_{t}=\rho_{t}(g_i(t)+V^i_{t+1})\}\supset \{V^i_{t}<\min_{j\in\mcI^{-i}} \rho_t(\tilde g_{i,j}(t)+V^{j}_{t+1})\}$.
	
	Let $M^i$ be the martingale in the Doob decomposition (see Lemma \ref{Lemma:Doob-Decomposition} in Appendix \ref{sec:Aggregated-Martingales}) for $V^i$, that is, $M^i_0=0$ and $\Delta M^i_{t+1}=V^i_{t+1}-\rho_{t}(V^i_{t+1})$. We have
	\begin{align*}
		V^i_{t} &= \min_{j\in\mcI}\rho_T(\tilde g_{i,j}(T))+\sum_{s=t}^{T-1} (V^i_{s}-V^i_{s+1}).
	\end{align*}
	Now, as
	\begin{align*}
		\Delta M^i_{s+1}+\Delta A^i_{s+1}&=V^i_{s+1}-\rho_{s}(V^i_{s+1})+\rho_{s}(g_i(t)+V^i_{s+1})-V^i_{s}
		\\
		&=V^i_{s+1}+\rho_{s}(g_i(t)+V^i_{s+1}-\rho_{s}(V^i_{s+1}))-V^i_{s}
		\\
		&=V^i_{s+1}-V^i_{s}+\rho_{s}(g_i(t)+\Delta M^i_{s+1}),
	\end{align*}
	we get $V^i_{s}-V^i_{s+1}=\rho_{s}(g_i(s)+\Delta M^i_{s+1})-\Delta M^i_{s+1}+\Delta A^i_{s+1}$ and, thus,
	\begin{align*}
		V^i_{t} = {} & \min_{j\in\mcI}\rho_T(\tilde g_{i,j}(T))+\sum_{s=t}^{T-1}\rho_s(g_{i}(s)+\Delta M^i_{s+1}) -(M_T-M_t) \\
		& -(A_T-A_t).
	\end{align*}
	
	We conclude that $(V,M,A)$ is a solution to the RBSDE \eqref{ekv:rbsde}.\\
	
	\emph{Uniqueness:} Suppose that $(Y,N,B)$ is another solution. Then
	\begin{equation}\label{eq:uniqueness-proof-1}
		\Delta Y^i_{t+1} = -\rho_t(g_{i}(t)+\Delta N^i_{t+1})+\Delta N^i_{t+1}+\Delta B^i_{t+1}.
	\end{equation}
	Applying $\rho_t$ on both sides gives
	\begin{align*}
		\rho_t(\Delta Y^i_{t+1}) &=-\rho_t(g_{i}(t)+\Delta N^i_{t+1})+\rho_t(\Delta N^i_{t+1}+\Delta B^i_{t+1})
		\\
		&=-\rho_{t}(g_{i}(t)+\Delta N^i_{t+1})+\Delta B^i_{t+1}
	\end{align*}
	since, by our assumption on solutions to the RBSDE, $\Delta B^i_{t+1}$ is $\mcG_t$-measurable and $N^i$ is a martingale. Inserted into equation \eqref{eq:uniqueness-proof-1}, this gives
	\begin{align*}
		\Delta N^i_{t+1}&=\Delta Y^i_{t+1} + \rho_t(g_{i}(t)+\Delta N^i_{t+1})-\Delta B^i_{t+1}
		\\
		&=\Delta Y^i_{t+1}-\rho_t(\Delta Y^i_{t+1})
		\\
		&=Y^i_{t+1}-\rho_t(Y^i_{t+1})
	\end{align*}
	and
	\begin{align*}
		\Delta B^i_{t+1}&=\rho_t(\Delta Y^i_{t+1}) + \rho_t(g_{i}(t)+\Delta N^i_{t+1})
		\\
		&=\rho_t(\Delta Y^i_{t+1}) + \rho_t(g_{i}(t)+Y^i_{t+1}-\rho_t(Y^i_{t+1}))
		\\
		&=\rho_t(g_{i}(t)+Y^i_{t+1})-Y^i_t.
	\end{align*}
	We conclude that
	\begin{equation}\label{eq:One-Step-M-A-RBSDE}
		\begin{cases}
			\Delta N^{i}_{t+1}=Y^i_{t+1}-\rho_t(Y^i_{t+1})\\
			\Delta B^{i}_{t+1}=\rho_{t}(g_i(t)+Y^i_{t+1})-Y^i_t,
		\end{cases}
	\end{equation}
	and in particular we have that, given $Y\in\mcL^{\infty}_{\bbG,T}$, there is at most (up to indistinguishability of processes) one pair $(N,B)$ such that $(Y,N,B)$ solves the RBSDE \eqref{ekv:rbsde}.
	
	Since $(Y,N,B)$ solves the RBSDE \eqref{ekv:rbsde} we have that $$Y^i_t \leq  \min_{j\in\mcI^{-i}} \rho_t(\tilde g_{i,j}(t)+Y^j_{t+1}),$$ and
	\begin{align*}
		Y^i_t &= Y^i_{t+1}+\rho_t(g_{i}(t)+\Delta N^i_{t+1})-(N^i_{t+1}-N^i_t)-(B^i_{t+1}-B^i_t)
		\\
		&\leq Y^i_{t+1}+\rho_t(g_{i}(t)+\Delta N^i_{t+1})-(N^i_{t+1}-N^i_t)
		\\
		&= Y^i_{t+1}+\rho_t(g_{i}(t)+Y^i_{t+1}-\rho_t(Y^i_{t+1}))-(Y^i_{t+1}-\rho_t(Y^i_{t+1}))
		\\
		&=\rho_t(g_{i}(t)+Y^i_{t+1}).
	\end{align*}
	We conclude that $Y^i_t \leq  \min_{j\in\mcI} \rho_t(\tilde g_{i,j}(t)+Y^j_{t+1})$ for all $t \le T$ and $i \in \mcI$. For $t=T$ this implies that, for all $i \in \mcI$, $Y^i_T \leq  \min_{j\in\mcI} \rho_T(\tilde g_{i,j}(t))=V^i_T$. Assume that $t < T$ and $Y^i_{t+1} \leq  V^i_{t+1}$ for all $i \in \mcI$, then
	\begin{align*}
		Y^i_t &\leq  \min_{j\in\mcI} \rho_t(\tilde g_{i,j}(t)+Y^j_{t+1})
		\\
		&\leq \min_{j\in\mcI} \rho_t(\tilde g_{i,j}(t)+V^j_{t+1})
		\\
		&\leq V^i_t.
	\end{align*}
	Applying an induction argument we thus find that if $(Y,N,B)$ solves the RBSDE \eqref{ekv:rbsde} then $Y^i_t\leq V^i_t$ for all $t \le T$ and $i \in \mcI$. To arrive at uniqueness we show that the value $Y^i_t$ is attained by a strategy in which case the reverse inequality follows by optimality of $V^i_t$.
	
	Define the stopping time $\bar \tau^{t,i}_1 \coloneqq \inf \{ s\geq t:\Delta B^i_{s+1}>0\} \wedge T$ and the $\mcG_{\bar \tau^{t,i}_1}$-measurable $\mathcal{I}$-valued random variable $\bar \beta^{t,i}_1$ as a measurable selection of
	\[
	\begin{cases}
		\argmin\limits_{j\in\mcI^{-i}}\rho_{\bar \tau^{t,i}_1}\Big(\tilde g_{i,j}(\bar \tau^{t,i}_1)+Y^{j}_{\bar \tau^{t,i}_1+1}\Big), & \bar \tau^{t,i}_1 < T, \\
		\argmin\limits_{j\in\mcI}\rho_{T}\big(\tilde g_{i,j}(T)\big), & \bar \tau^{t,i}_1 = T.
	\end{cases}
	\]
	Now as $B^i_{\bar \tau^{t,i}_1}-B^i_t=0$ we have for $t\leq s< \bar \tau^{t,i}_1$ the recursion
	\begin{align*}
		Y^i_s &= Y^i_{s+1}+\rho_s(g_{i}(s)+\Delta N^i_{s+1})-(\Delta N^i_{s+1}) - (\Delta B^i_{s+1})
		\\
		&=Y^i_{s+1}+\rho_s(g_{i}(s)+Y^i_{s+1}-\rho_s(Y^i_{s+1}))-(Y^i_{s+1}-\rho_s(Y^i_{s+1}))
		\\
		&=\rho_s(g_{i}(s)+Y^i_{s+1}).
	\end{align*}
	Furthermore, by the Skorokhod condition, on $\{\bar \tau^{t,i}_1 < T\}$ we have that
	\begin{align*}
		Y^i_{\bar \tau^{t,i}_1}&=\min_{j\in\mcI^{-i}}\rho_{\bar \tau^{t,i}_1}(\tilde g_{i,j}(\bar \tau^{t,i}_1)+Y^{j}_{\bar \tau^{t,i}_1+1})
		\\
		&=\rho_{\bar \tau^{t,i}_1}(\tilde g_{i,\bar \beta^{t,i}_1}(\bar \tau^{t,i}_1)+Y^{\bar \beta^{t,i}_1}_{\bar \tau^{t,i}_1+1})
	\end{align*}
	and since $Y^{i}_{T} = \argmin\limits_{j\in\mcI}\rho_{T}\big(\tilde g_{i,j}(T)\big)$ we conclude that
	\begin{align*}
		Y^i_t=\rho_{t,\tau^{t,i}_1}(g_i(t),\ldots,g_i(\bar \tau^{t,i}_1-1),\tilde g_{i,\bar \beta^{t,i}_1}(\bar \tau^{t,i}_1)+Y^{\bar \beta^{t,i}_1}_{\bar \tau^{t,i}_1+1}),
	\end{align*}
	with $Y^{j}_{T+1} = 0$ for all $j \in \mathcal{I}$.
	
	This process can be repeated to define $\bar \tau^{t,i}_{k+1} \coloneqq \inf \{ s > \bar \tau^{t,i}_{k} \colon \Delta B^{\beta^{t,i}_k}_{s+1}>0\} \wedge T$ and the $\mcG_{\bar \tau^{t,i}_{k+1}}$-measurable $\mathcal{I}$-valued random variable $\bar \beta^{t,i}_{k+1}$ as a measurable selection of
	\[
	\begin{cases}
		\argmin\limits_{j\in\mcI^{-\beta^{t,i}_k}}\rho_{\bar \tau^{t,i}_{k+1}}\Big(\tilde g_{\beta^{t,i}_k,j}(\bar \tau^{t,i}_{k+1})+Y^{j}_{\bar \tau^{t,i}_{k+1}+1}\Big), & \tau^{t,i}_{k+1} < T,\\
		\argmin\limits_{j\in\mcI}\rho_{T}\big(\tilde g_{\beta^{t,i}_k,j}(T)\big), & \tau^{t,i}_{k+1} = T.
	\end{cases}
	\]
	Letting $\mathcal{N} \coloneqq \min\{k \ge 1 \colon \bar \tau^{t,i}_{k} \ge T\}$,
	\[
	\bar\xi^{t,i}_s \coloneqq i\mathbf{1}_{[-1, \tau^{t,i}_{1})}(s) + \sum_{j=1}^{\mathcal{N}} \beta_j \mathbf{1}_{[\bar \tau^{t,i}_{j},\bar \tau^{t,i}_{j+1})}(s) + \beta_{\mathcal{N}}\mathbf{1}_{\{s=T\}},
	\]
	and arguing as above we get that
	\begin{align*}
		Y^i_t&=\rho_{t,T}(\tilde g_{i,\bar\xi^{t,i}_t}(t),\ldots,\tilde g_{\bar\xi^{t,i}_{T-1},\bar\xi^{t,i}_{T}}(T))
		\\
		&\geq V^i_t.
	\end{align*}
\end{proof}

Given a strategy $\xi \in\mcU^{i}_t$, we can define its pairs of jump times $\tau_{j} \ge t$ and positions $\beta_{j} \in \mathcal{I}$ as follows:
\begin{equation}\label{eq:alternative-representation-of-strategy}
	\begin{split}
		\tau_{1} & = \inf \{s \ge t \colon \xi_{s} \neq i\} \wedge T, \\
		\beta_{1} & = \xi_{\tau_{1}}, \\
		\vdots \\
		\tau_{j+1} & = \inf \{s > \tau_{j} \colon \xi_{s} \neq \beta_{j}\} \wedge T, \\
		\beta_{j+1} & = \xi_{\tau_{j+1}}.
	\end{split}
\end{equation}
(Note that constant strategies $\xi_t \equiv i$ satisfy $\tau_{j} = T$ and $\beta_j = i$ for all $j$.)

We have the following characterisation of an optimal strategy:
\begin{cor}\label{cor:sufficient}
	A strategy $\xi \in\mcU^{i}_t$ is optimal for \eqref{ekv:optSwitch} if
	\begin{equation}\label{eq:optimal-strategy-sufficient-condition}
		\begin{cases}
			A^{\beta_{j-1}}_{\tau_j}-A^{\beta_{j-1}}_{\tau_{j-1}}=0,\\
			Y^{\beta_{j-1}}_{\tau_j}=\rho_{\tau_j}(\tilde g_{\beta_{j-1},\beta_{j}}+Y^{\beta_j}_{\tau_j+1}),
		\end{cases}
	\end{equation}
	where $\{(\tau_j,\beta_{j})\}$ are the pairs of jumps times and positions of $\xi$. If $\rho$ has the strong sensitivity property (cf. Appendix \ref{sec:conditional-risk-mapping}) then condition \eqref{eq:optimal-strategy-sufficient-condition} is also necessary for optimality.
\end{cor}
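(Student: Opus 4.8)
The plan is to leverage the structure already developed in the proof of Theorem~\ref{thm:rbsde}, in which the explicitly constructed strategy $\bar\xi^{t,i}$ satisfies precisely the two relations in \eqref{eq:optimal-strategy-sufficient-condition}; the corollary asserts that these relations characterise optimality. Throughout I use $Y = V$, which is guaranteed by Theorem~\ref{thm:rbsde}, and the convention $\tau_0 = t$, $\beta_0 = i$ for the jump representation \eqref{eq:alternative-representation-of-strategy}.

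For \emph{sufficiency}, suppose $\xi \in \mcU^i_t$ satisfies \eqref{eq:optimal-strategy-sufficient-condition}. Between consecutive jumps, i.e.\ for $\tau_{j-1} \le s < \tau_j$, the strategy stays in mode $\beta_{j-1}$, and the first condition $A^{\beta_{j-1}}_{\tau_j} - A^{\beta_{j-1}}_{\tau_{j-1}} = 0$ forces $\Delta A^{\beta_{j-1}}_{s+1} = 0$ on this interval. Exactly as in the recursion displayed in the uniqueness part of that proof, this yields the one-step identity $V^{\beta_{j-1}}_s = \rho_s(g_{\beta_{j-1}}(s) + V^{\beta_{j-1}}_{s+1})$. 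At the jump time itself the second condition reads $V^{\beta_{j-1}}_{\tau_j} = \rho_{\tau_j}(\tilde g_{\beta_{j-1},\beta_j}(\tau_j) + V^{\beta_j}_{\tau_j+1})$. I would then telescope these identities through the nested definition of $\rho_{t,T}$, substituting the one-step relations successively from $s=t$ up to $s=T$, to obtain $V^i_t = \rho_{t,T}(\tilde g_{i,\xi_t}(t),\ldots,\tilde g_{\xi_{T-1},\xi_T}(T))$. Since $V^i_t$ is the essential infimum over $\mcU^i_t$, this exhibits $\xi$ as attaining the infimum, hence optimal; the computation is the one already performed for $\bar\xi^{t,i}$, now applied verbatim to the given $\xi$.

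For \emph{necessity} under strong sensitivity I would argue by contraposition: if either relation in \eqref{eq:optimal-strategy-sufficient-condition} fails on a $\mcG$-measurable set of positive probability, then $\xi$ is strictly suboptimal. If the first relation fails, then $\Delta A^{\beta_{j-1}}_{s+1} > 0$ for some $\tau_{j-1}\le s<\tau_j$ on a set of positive probability; by the Skorokhod condition together with \eqref{ekv:VFrec} this means $V^{\beta_{j-1}}_s < \rho_s(g_{\beta_{j-1}}(s) + V^{\beta_{j-1}}_{s+1})$ there, so remaining in $\beta_{j-1}$ is strictly costlier than the value. If instead the second relation fails, then $\beta_j$ is not a minimiser at $\tau_j$ and $V^{\beta_{j-1}}_{\tau_j} < \rho_{\tau_j}(\tilde g_{\beta_{j-1},\beta_j}(\tau_j) + V^{\beta_j}_{\tau_j+1})$ on a positive-probability set. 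In either case a strict gap between $V$ and the cost incurred by $\xi$ appears at one time step. Invoking strong sensitivity (strict monotonicity of each $\rho_s$, cf.\ Appendix~\ref{sec:conditional-risk-mapping}) ensures the gap is not collapsed when passed through the intervening mappings, and propagating it backwards through the nested $\rho_{t,T}$ yields $V^i_t < \rho_{t,T}(\tilde g_{i,\xi_t}(t),\ldots)$, so $\xi$ cannot be optimal.

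The main obstacle is the necessity direction: converting a local strict inequality that holds only on a set of positive probability into a genuine strict inequality for $\rho_{t,T}$ at the initial time. This requires strong sensitivity so that no $\rho_s$ erases the gap, careful $\mcG_s$-measurable bookkeeping with indicator functions to localise the event through successive conditioning (mirroring the $\ett_{\{\xi_t' = j\}}$ manipulation in the proof of Theorem~\ref{thm:vrfctn}), and a monotone propagation argument ensuring the strict inequality survives the random, stopping-time-indexed number of nested applications of $\rho$.
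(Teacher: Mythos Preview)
Your sufficiency argument is essentially the paper's: both telescope the one-step identities derived in the proof of Theorem~\ref{thm:rbsde} to obtain $Y^i_t = \rho_{t,T}(\tilde g_{i,\xi_t}(t),\ldots,\tilde g_{\xi_{T-1},\xi_T}(T))$ and conclude by $Y = V$.

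For necessity the paper takes the direct route rather than your contrapositive. Starting from optimality it writes the chain
\[
Y^i_t = \rho_{t,T}(\tilde g_{i,\xi_t}(t),\ldots) = \rho_{t,\tau_1}\big(g_i(t),\ldots,g_i(\tau_1-1),\rho_{\tau_1,T}(\ldots)\big) \ge \rho_{t,\tau_1}\big(\ldots,\rho_{\tau_1}(\tilde g_{i,\beta_1}+Y^{\beta_1}_{\tau_1+1})\big) \ge \rho_{t,\tau_1}\big(\ldots,Y^i_{\tau_1}\big) \ge \cdots \ge Y^i_t,
\]
using Lemma~\ref{lem:Recursive-Optional-Stopping} for the stopping-time decomposition, monotonicity, and the RBS$\Delta$E inequalities. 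The chain closes, so every inequality is an a.s.\ equality; strong sensitivity is then invoked to \emph{peel} the nested mappings, yielding $Y^i_{\tau_1} = \rho_{\tau_1}(\tilde g_{i,\beta_1}+Y^{\beta_1}_{\tau_1+1})$ and, from the definition of $A$, $A^i_{\tau_1}-A^i_t=0$, with the remaining $j$ handled by induction. This is the same inequality chain you are implicitly using, read top-down rather than bottom-up. The payoff is that the ``main obstacle'' you identify---propagating a strict inequality on a positive-probability set through a random, stopping-time-indexed number of nested $\rho$'s with indicator-function bookkeeping---simply disappears: the squeeze forces a.s.\ equalities at every stage, and strong sensitivity is applied once per layer to ordered arguments that are already known to have equal risk. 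Your contrapositive is correct in principle, but the direct squeeze is cleaner and sidesteps exactly the measurability tracking you flag as delicate.
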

\begin{proof}
	\vskip 0.1em
	{\it Sufficiency:}
	
	From the proof of Theorem~\ref{thm:rbsde} we have that
	\begin{equation}\label{eq:value-of-optimal-strategy}
		Y^i_t = \rho_{t,T}(\tilde g_{i,\xi_t}(t),\ldots,\tilde g_{\xi_{T-1},\xi_{T}}),
	\end{equation}
	and optimality follows by the fact that $Y^i_t=V^i_t$.
	\vskip 0.1em
	{\it Necessity:} Suppose $\xi
	\in\mcU^{i}_t$ is optimal for \eqref{ekv:optSwitch} and $\rho$ is strongly sensitive. Let $\{(\tau_j,\beta_{j})\}$ be the pairs of jump times and positions of $\xi$ as defined in \eqref{eq:alternative-representation-of-strategy}. Then using \eqref{eq:value-of-optimal-strategy} above, Lemma \ref{lem:Recursive-Optional-Stopping}, the RBS$\Delta$Es \eqref{ekv:rbsde} and monotonicity of $\rho$ we have
	\begin{align*}
		Y^i_t & = \rho_{t,T}(\tilde g_{i,\xi_t}(t),\ldots,\tilde g_{\xi_{T-1},\xi_{T}}) \\
		& = \rho_{t,\tau_{1}}\Big(g_{i}(t),\ldots,g_{i}(\tau_{1}-1),\rho_{\tau_{1},T}\big(\tilde g_{i,\beta_{1}}(\tau_{1}),\ldots,\tilde g_{\xi_{T-1},\xi_{T}}\big)\Big) \\
		& \ge \rho_{t,\tau_{1}}\Big(g_{i}(t),\ldots,g_{i}(\tau_{1}-1),\rho_{\tau_{1}}\big(\tilde g_{i,\beta_{1}}(\tau_{1})+Y^{\beta_{1}}_{\tau_{1}+1}\big)\Big)\\
		& \ge \rho_{t,\tau_{1}}\big(g_{i}(t),\ldots,g_{i}(\tau_{1}-1),Y^{i}_{\tau_{1}}\big)\\
		& \vdots \\
		& \ge Y^{i}_{t},
	\end{align*}
	where we set $Y^{j}_{T+1} \coloneqq 0$ for all $j \in \mathcal{I}$. We therefore have
	\[
	\rho_{t,\tau_{1}}\Big(g_{i}(t),\ldots,g_{i}(\tau_{1}-1),\rho_{\tau_{1}}\big(\tilde g_{i,\beta_{1}}(\tau_{1})+Y^{\beta_{1}}_{\tau_{1}+1}\big)\Big) = \rho_{t,\tau_{1}}\big(g_{i}(t),\ldots,g_{i}(\tau_{1}-1),Y^{i}_{\tau_{1}}\big),
	\]
	and by strong sensitivity of $\rho$ and the definition of $A^{i}$ from Theorem~\ref{thm:rbsde}, \eqref{eq:optimal-strategy-sufficient-condition} is true for $j = 1$. The general case is proved by induction in a similar manner.
\end{proof}


\subsection{The special case of optimal stopping\label{sec:opti-stop}}
We now consider the problem of finding
\begin{align}\label{ekv:optStop}
	F_t\coloneqq\essinf_{\tau\in\mathscr{T}_{[t,T]}}\rho_{t,\tau}(f(t),\ldots,f(\tau-1),h(\tau)),
\end{align}
for given sequences $\{f(t)\}_{t=0}^{T}$ and $\{h(t)\}_{t=0}^{T}$ in $(L^{\infty}_{\mathcal{F}})^{T+1}$. This problem can be related to optimal switching with two modes $\mcI\coloneqq\{1,2\}$. The optimal stopping problem \eqref{ekv:optStop} is equivalent to \eqref{ekv:optSwitch} if we
\begin{itemize}
	\item Set $g_1(t)=f(t)$ for $0\leq t\leq T-1$, $g_1(T)=h(T)$, $c_{1,2}\equiv h$ and $g_2\equiv c_{2,1}\equiv 0$.
	\item \emph{Mutatis mutandis} let $\mcI$ depend on the present mode. We then set $\mcI(1) \coloneqq \{1,2\}$ when we are in mode $1$ and $\mcI(2) \coloneqq \{2\}$ whenever we are in mode 2. In particular this gives $\mcI(2)^{-2}=\emptyset$ in \eqref{ekv:rbsde}. We additionally use the conventions $\min\emptyset=\infty$ and $-\infty \cdot 0 = \infty \cdot 0 = 0$.
	\item Optimise over strategies satisfying $\xi_{t-1}=1$.
\end{itemize}
We note that in this setting the recursion \eqref{ekv:VFrec} gives $V^2\equiv 0$. The following result is then a direct consequence of Theorem \ref{thm:rbsde}:

\begin{thm}\label{Thm:Risk-Averse-Value-ch-Backward-Induction}
	The value process $F$ for the optimal stopping problem satisfies
	\begin{align}\label{ekv:VFrecOS}
		\begin{cases}
			F_T=\rho_T(h(T)),&{}\\
			F_t=\rho_t(f(t)+F_{t+1})\wedge\rho_t(h(t)),& \: 0\le t < T,
		\end{cases}
	\end{align}
	and the stopping time $\tau_{t} \in \mathscr{T}_{[t,T]}$ defined by
	\begin{equation}\label{eq:Risk-Averse-ch-Optimal-Stopping-Time}
		\tau_{t} = \inf\left\{ t \le s \le T \colon F_{s} = \rho_{s}(h(s))\right\},
	\end{equation}
	is optimal for \eqref{ekv:optStop}. Furthermore, there exist a $\rho_{s,t}$-martingale $M$ and a non-decreasing $\bbG$-predictable process $A$ such that $(F,M,A)$ is the unique solution to the following RBS$\Delta$E:
	
	\begin{equation}\label{ekv:rbsdeOS}
		\begin{cases}
			F_t = \rho_T(h(T))+\sum_{s=t}^{T-1}\rho_s(f(s)+\Delta M_{s+1})-(M_T-M_t)
			\\
			\qquad-(A_{T}-A_t),\quad \forall\,t\in\bbT,\\
			F_t \leq  \rho_t(h(t)), \quad \forall\,t\in\bbT,\\
			\sum_{t=0}^{T-1}(F_t - \rho_t(h(t)))\Delta A_{t+1}=0.
		\end{cases}
	\end{equation}
\end{thm}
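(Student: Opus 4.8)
The plan is to obtain every assertion as a specialisation of the switching results via the two-mode reduction set out immediately before the statement, so that no fresh induction is needed. First I would make the correspondence explicit: with $g_1(t)=f(t)$ for $t<T$, $g_1(T)=h(T)$, $c_{1,2}\equiv h$, $g_2\equiv c_{2,1}\equiv 0$ and the mode-dependent action sets $\mcI(1)=\{1,2\}$, $\mcI(2)=\{2\}$, one computes directly from \eqref{eq:defcosts} that $\tilde g_{1,1}(t)=f(t)$ (equal to $h(T)$ at $t=T$), $\tilde g_{1,2}(t)=h(t)$ and $\tilde g_{2,2}\equiv 0$. Substituting into the backward recursion \eqref{ekv:VFrec} and invoking normalisation of $\rho$ shows $V^{2}\equiv 0$ and collapses the $V^{1}$-recursion to $F_t=\rho_t(f(t)+F_{t+1})\wedge\rho_t(h(t))$, i.e.\ \eqref{ekv:VFrecOS}; identifying $F=V^{1}$ and appealing to $\hat V=V$ from Theorem \ref{thm:vrfctn} then proves the first claim.

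Next I would identify the optimal stopping time with the single switch from mode $1$ into the absorbing mode $2$. Since $F_t\le\rho_t(h(t))$ always holds, the event $\{F_s=\rho_s(h(s))\}$ is precisely the set on which $2\in\argmin_{j\in\mcI(1)}\rho_s(\tilde g_{1,j}(s)+V^{j}_{s+1})$, that is, on which it is optimal to stop. For $s<\tau_t$ the strict inequality $F_s<\rho_s(h(s))$ forces the argmin in \eqref{eq:select} to be $\{1\}$, so the strategy must remain in mode $1$, while at $s=\tau_t$ the admissible choice $j=2$ is absorbing. Hence the optimal strategy furnished by Theorem \ref{thm:vrfctn}, started from mode $1$, can be taken to jump exactly at $\tau_t$, which proves that $\tau_t$ from \eqref{eq:Risk-Averse-ch-Optimal-Stopping-Time} is optimal for \eqref{ekv:optStop}.

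For the RBS$\Delta$E I would simply read off \eqref{ekv:rbsdeOS} from the $i=1$ component of \eqref{ekv:rbsde}. As $\mcI(1)^{-1}=\{2\}$ we have $\min_{j\in\mcI(1)^{-1}}\rho_t(\tilde g_{1,j}(t)+Y^{j}_{t+1})=\rho_t(h(t)+0)=\rho_t(h(t))$, so the reflection line becomes $F_t\le\rho_t(h(t))$ and the Skorokhod line becomes its scalar form. Theorem \ref{thm:rbsde} then supplies the $\rho_{s,t}$-martingale $M^{1}$ (the Doob decomposition of $V^{1}$) and the predictable non-decreasing $A^{1}$, with $Y^{1}=V^{1}=F$; setting $M=M^{1}$, $A=A^{1}$ gives existence. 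For uniqueness I would embed any scalar solution $(F,M,A)$ of \eqref{ekv:rbsdeOS} into the vector triple $((F,0),(M,0),(A,0))$ and verify it solves the full two-mode system \eqref{ekv:rbsde}, so that uniqueness there yields uniqueness of the scalar solution.

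The step I expect to require the most care is the degenerate mode-$2$ equation, where $\mcI(2)^{-2}=\emptyset$ brings the conventions $\min\emptyset=\infty$ and $\infty\cdot 0=0$ into play. One must confirm that these render the reflection and Skorokhod constraints for $i=2$ vacuous, while the first line of \eqref{ekv:rbsde} together with $\tilde g_{2,2}\equiv 0$ and normalisation still pins $Y^{2}\equiv 0$, $M^{2}\equiv 0$ and $A^{2}\equiv 0$. Checking that these degenerate constraints are consistent is what makes the embedding faithful, and hence what secures both the existence and the uniqueness directions of the reduction.
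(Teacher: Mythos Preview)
Your proposal is correct and follows exactly the route the paper takes: the paper states that Theorem~\ref{Thm:Risk-Averse-Value-ch-Backward-Induction} is ``a direct consequence of Theorem~\ref{thm:rbsde}'' via the two-mode reduction described in Section~\ref{sec:opti-stop}, and you have simply unpacked that one-line argument in detail, correctly deriving the recursion and optimal stopping time from Theorem~\ref{thm:vrfctn}, reading off the $i=1$ component of \eqref{ekv:rbsde}, and handling the degenerate $i=2$ equation with the stated conventions. Your attention to the $\mcI(2)^{-2}=\emptyset$ conventions and to the embedding for uniqueness is appropriate and matches the spirit of the paper's \emph{mutatis mutandis} remark.
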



\begin{rem}\label{rem:osf}
	As is done in \cite{Follmer2016}, Theorem \ref{Thm:Risk-Averse-Value-ch-Backward-Induction} can be used to identify the optimal switching problem with a family of optimal stopping problems. Setting
	\[
	h^i(t) = \begin{cases}
		\min_{j\in\mcI} \rho_T(\tilde g_{i,j}(T)),& t = T,\\
		\min_{j\in\mcI^{-i}} \rho_t(\tilde g_{i,j}(t)+\hat V^{j}_{t+1}),& t < T,
	\end{cases}
	\]
	and then substituting $h^i$ for $h$ in \eqref{ekv:VFrecOS} and recalling \eqref{ekv:VFrec}, it follows by Theorem 3 that for each $i \in \mcI$ and $t \in \mathbb{T}$ we have
	\[
	\hat V_t^{i} = \essinf_{\tau\in\mathscr{T}_{[t,T]}}\rho_{t,\tau}\big(\tilde g_{i,i}(t),\ldots,\tilde g_{i,i}(\tau-1),h^i(\tau)\big).
	\]
	
\end{rem}

\section{Infinite-horizon risk-aware optimal switching under general filtration}\label{sec:infinite-horizon}
In many problems the horizon $T$ is so long that it can be considered infinite, and this motivates us to extend the results obtained in Section~\ref{sec:Non-Markovian-Optimal-Stopping} to the infinite horizon. We thus let $\bbT \coloneqq \bbN_0$ and define the infinite-horizon aggregated risk mapping $\varrho_{s} \colon (L^{\infty}_{\mathcal{F}})^\mathbb{T} \to m\mcG_{s}$ (with $m\mcG_{s}$ the set of $\mcG_{s}$-measurable random variables) by
\begin{align}
	\varrho_s(W_s,W_{s+1},\ldots) = \limsup_{t \to \infty} \rho_{s,t}(W_s,W_{s+1},\ldots,W_t).
\end{align}

We define the value process for the switching problem on an infinite horizon as
\begin{equation}\label{eq:value-function-infinite-horizon}
	V_t^{i} \coloneqq \essinf_{\xi\in\mcU^i_t}\varrho_{t}(\tilde g_{\xi_{t-1},\xi_t}(t),\tilde g_{\xi_{t},\xi_{t+1}}(t+1),\ldots).
\end{equation}

\begin{defn}\label{def:Useful-Sequences-Random-Variables}
	\mbox{}
	Let $\mcL_\mathbb{G}^{\infty} \coloneqq \otimes_{t \in \mathbb{T}}L^\infty_{\mcG_t}$ and
	\[
	\mathcal{L}_\mathbb{G}^{\infty,d} \coloneqq \{W \in \mcL_\mathbb{G}^{\infty} \colon \lim_{s \to \infty} \esssup_\omega |W_s(\omega)| = 0 \}.
	\]
	Also, let $\mathcal{K}^{+}_{d}$ denote the set of all non-negative deterministic sequences $\{k_{t}\}_{t \in \mathbb{T}}$ such that the series $\sum_{t \in \mathbb{T}}k_{t}$ converges, and define
	\[
	H_\mathcal{F} \coloneqq {} \big\{ W \in (L_\mathcal{F}^\infty)^{\mathbb{T}} \colon \exists \{k_{t}\}_{t \in \mathbb{T}} \in \mathcal{K}^{+}_{d} \; \text{such that} \; |W_{t}| \le k_{t} \; \forall t \in \mathbb{T}\big\}.
	\]
\end{defn}
\begin{rem}\label{rem:ex}
	If $W \in H_\mathcal{F}$ then for every $s \in \mathbb{T}$ the limit
	\[
	\varrho_{s}(W_{s},W_{s+1},\ldots) = \lim_{t \to \infty}\rho_{s,t}(W_{s},\ldots,W_{t})
	\]
	exists almost surely and belongs to $L^{\infty}_{\mathcal{G}_{s}}$ (see Lemma \ref{lem:vrho-limit} in the appendix). An example $W \in H_\mathcal{F}$ is a discounted sequence $W_{t} = \alpha^t Z_t$ for some $\alpha \in (0,1)$ and $\{Z_{t}\}_{t \in \mathbb{T}} \subset L^\infty_\mathcal{F}$ with $\sup_{t}|Z_{t}| < C$ for some $C \in (0,\infty)$.
\end{rem}

\begin{assumption}\label{assumption:infinite-horizon}
	There exists a sequence $\{\bar g(t)\}_{t\in\bbT}\in H_{\mcF}$ such that $|\tilde g_{i,j}(t)| \leq \bar g(t)$ for all $(t,i,j)\in\bbT\times\mcI^2$.
\end{assumption}

\subsection{Dynamic programming equations}

For $(t,r) \in\bbT^{2}$ we set $\hat V_{t,r}^{i} \coloneqq \varrho_{t}(g_i(t),g_i(t+1),\ldots)$ whenever $t>r$ and define
\begin{align}\label{ekv:recIH}
	\hat V_{t,r}^{i}=\min_{j\in\mcI} \rho_t(\tilde g_{i,j}(t)+\hat V^{j}_{t+1,r}),
\end{align}
recursively for $t\leq r$. By a simple induction argument we note that for each $i\in\mcI$ and $r\in\bbT$ the sequence $\{\hat V_{t,r}^{i} \}_{t\in\bbT}$ exists as a member of $\mcL^{\infty,d}_\bbG$. We have the following lemma:\\

\begin{lem}\label{lem:IHtrunk}
	For $0\leq t\leq r$ and $i\in\mcI$ let $\mcU^i_{t,r}\coloneqq\{\xi\in\mcU^i_t:\xi_s=\xi_r,\,\forall s> r\}$. Then,
	\begin{align*}
		\hat V_{t,r}^{i}=\essinf_{\xi\in\mcU^i_{t,r}}\varrho_{t}(\tilde g_{\xi_{t-1},\xi_t}(t),\tilde g_{\xi_{t},\xi_{t+1}}(t+1),\ldots).
	\end{align*}
\end{lem}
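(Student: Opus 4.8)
The plan is to mirror the backward-induction argument used for Theorem~\ref{thm:vrfctn}, with the finite-horizon nested mapping $\rho_{t+1,T}$ replaced by the infinite-horizon mapping $\varrho_{t+1}$, and to carry along an inductive claim that simultaneously establishes the equality and exhibits an optimal strategy (exactly as in Theorem~\ref{thm:vrfctn}). The induction runs downward in $t$. The base case is any $t>r$: there the only strategy in $\mcU^i_{t,r}$ is the constant strategy $\xi_s\equiv i$, since $\xi_{t-1}=i$ together with $t-1\ge r$ forces $\xi_r=i$ and hence $\xi_s=i$ for all $s\ge r$; using $c_{i,i}\equiv 0$, the right-hand side then equals $\varrho_t(g_i(t),g_i(t+1),\ldots)$, which is $\hat V^i_{t,r}$ by definition. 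The key structural tool, to be established first (or invoked from Lemma~\ref{lem:vrho-limit}), is the one-step tower property
\[
\varrho_t(W_t,W_{t+1},\ldots)=\rho_t\big(W_t+\varrho_{t+1}(W_{t+1},W_{t+2},\ldots)\big), \qquad W\in H_\mcF,
\]
obtained by letting $n\to\infty$ in $\rho_{t,n}(W_t,\ldots,W_n)=\rho_t(W_t+\rho_{t+1,n}(W_{t+1},\ldots,W_n))$ and interchanging $\rho_t$ with the limit defining $\varrho_{t+1}$. Throughout, Assumption~\ref{assumption:infinite-horizon} guarantees that every cost tail $\{\tilde g_{\xi_{s-1},\xi_s}(s)\}_{s\ge t}$ is dominated by $\bar g\in H_\mcF$, so each $\varrho$ below is a genuine limit in $L^\infty$ (Remark~\ref{rem:ex}).

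For the induction step, fix $t\le r$ and assume the claim for $t+1$ and every mode. To show $\hat V^i_{t,r}\le\essinf$, I would take an arbitrary $\xi'\in\mcU^i_{t,r}$, observe that $\xi'$ also lies in $\mcU^{\xi'_t}_{t+1,r}$, and combine (i) the recursion \eqref{ekv:recIH}, (ii) the locality of $\rho_t$ applied to the $\mcG_t$-measurable partition $\{\xi'_t=j\}_{j\in\mcI}$, (iii) the induction hypothesis $\hat V^{\xi'_t}_{t+1,r}\le\varrho_{t+1}(\tilde g_{\xi'_t,\xi'_{t+1}}(t+1),\ldots)$ with monotonicity, and (iv) the tower property, to obtain
\[
\hat V^i_{t,r}\le \rho_t\big(\tilde g_{i,\xi'_t}(t)+\varrho_{t+1}(\tilde g_{\xi'_t,\xi'_{t+1}}(t+1),\ldots)\big)=\varrho_t(\tilde g_{i,\xi'_t}(t),\tilde g_{\xi'_t,\xi'_{t+1}}(t+1),\ldots).
\]
Taking the essential infimum over $\xi'$ yields $\hat V^i_{t,r}\le\essinf_{\xi\in\mcU^i_{t,r}}\varrho_t(\ldots)$.

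For the reverse inequality I would exhibit an optimal strategy as in \eqref{eq:select}: set $\xi^*_{t-1}=i$, choose $\xi^*_s$ as a measurable selection from $\argmin_{j\in\mcI}\rho_s(\tilde g_{\xi^*_{s-1},j}(s)+\hat V^j_{s+1,r})$ for $t\le s\le r$, and set $\xi^*_s=\xi^*_r$ for $s>r$, so that $\xi^*\in\mcU^i_{t,r}$. Using locality to attain the minimum in \eqref{ekv:recIH} at $j=\xi^*_t$, the induction hypothesis in the strengthened form that the essinf defining $\hat V^{\xi^*_t}_{t+1,r}$ is attained by the continuation of $\xi^*$ (valid since the selection rule for the subproblem starting in mode $\xi^*_t$ at time $t+1$ coincides with that continuation), and the tower property, I obtain $\hat V^i_{t,r}=\varrho_t(\tilde g_{i,\xi^*_t}(t),\ldots)\ge\essinf_{\xi\in\mcU^i_{t,r}}\varrho_t(\ldots)$, completing the step.

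The main obstacle I anticipate is the rigorous justification of the tower property, i.e.\ moving the limit inside $\rho_t$; this rests on the domination by $\bar g\in H_\mcF$ and on the continuity and regularity of the conditional risk mappings supplied in the appendix (Lemma~\ref{lem:vrho-limit}), rather than on the switching structure itself. A secondary, essentially bookkeeping point is to verify that the constructed $\xi^*$ genuinely belongs to $\mcU^i_{t,r}$ and that its continuation is measurable and optimal for the shifted subproblem, so that the induction hypothesis applies verbatim.
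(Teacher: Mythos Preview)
Your argument is correct, and it rests on exactly the same structural ingredient as the paper's proof---the one-step tower property for $\varrho$. However, the paper takes a much shorter route: rather than re-running the backward induction of Theorem~\ref{thm:vrfctn} line by line in the infinite-horizon setting, it observes that for $\xi\in\mcU^i_{t,r}$ the tail beyond time $r$ is the constant-mode sequence $g_{\xi_r}(r+1),g_{\xi_r}(r+2),\ldots$, uses the tower property (Lemma~\ref{lem:vrho-rec}, not Lemma~\ref{lem:vrho-limit} as you wrote) to fold this tail into a modified terminal cost $\tilde g_{i,j}(r)+\varrho_{r+1}(g_j(r+1),g_j(r+2),\ldots)\in L^\infty_{\mcG_{r+1}}$, and then invokes Theorem~\ref{thm:vrfctn} with horizon $T=r$ as a black box. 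Your approach is more self-contained and spells out the mechanism, whereas the paper's approach is more economical and makes transparent that the lemma is really just the finite-horizon result applied to a specific cost sequence. Both are valid; the only correction needed in your write-up is the reference for the tower property.
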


\begin{proof} This follows immediately from Lemma~\ref{lem:vrho-rec} by applying Theorem~\ref{thm:vrfctn} with cost sequence
	\begin{align}
		\Big(\tilde g_{i,j}(t),\tilde g_{i,j}(t+1),\ldots,\tilde g_{i,j}(r-1),
		\tilde g_{i,j}(r)+\varrho_{r+1}\big(g_j(r+1),g_j(r+2),\ldots\big)\Big)_{i,j \in\mcI},
	\end{align}
	noting that $\varrho_{r+1}(g_j(r+1),g_j(r+2),\ldots)\in L^\infty_{\mcG_{r+1}}$.
\end{proof}

We arrive at the following verification theorem:

\begin{thm}\label{thm:infHOR}
	The pointwise limits $\{\tilde V_{t}^{i}\}_{t\in\bbT,i\in\mcI}\coloneqq\lim_{r\to\infty}\{\hat V_{t,r}^{i}\}_{t\in\bbT,i\in\mcI}$ exist and satisfy
	\begin{align*}
		\tilde V_{t}^{i}=V_t^{i},\quad\forall\,t\in\bbT.
	\end{align*}
	Furthermore, starting from any $t \in \mathbb{T}$ and $i \in \mathcal{I}$, the limit family defines an optimal strategy $\xi^* \in \mcU^i_t$ as follows:
	\[
	\begin{cases}
		\xi_r^* = i, \quad r < t,\\
		\xi_r^*\in {\arg\min}_{j \in\mcI}\rho_r\big(\tilde g_{\xi_{r-1}^*,j}(r)+\tilde V^{j}_{r+1}\big), \quad r \ge t.
	\end{cases}
	\]
\end{thm}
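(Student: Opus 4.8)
The plan is to establish the theorem in three stages: first the existence of the pointwise limit together with the easy inequality $\tilde V_{t}^{i}\ge V_{t}^{i}$, then the reverse inequality by a truncation argument, and finally the optimality of the selector $\xi^{*}$ by telescoping the limiting dynamic programming equation. By Lemma~\ref{lem:IHtrunk} the truncated value $\hat V_{t,r}^{i}$ is the essential infimum of $\varrho_{t}$ over the class $\mcU^{i}_{t,r}$ of strategies frozen after time $r$. Since any strategy frozen after $r$ is a fortiori frozen after $r+1$, we have $\mcU^{i}_{t,r}\subseteq\mcU^{i}_{t,r+1}\subseteq\mcU^{i}_{t}$, so that $\{\hat V_{t,r}^{i}\}_{r\ge t}$ is non-increasing in $r$ and bounded below by $V_{t}^{i}$. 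Hence the pointwise limit $\tilde V_{t}^{i}$ exists almost surely and satisfies $\tilde V_{t}^{i}\ge V_{t}^{i}$.

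For the reverse inequality I would fix an arbitrary $\xi\in\mcU^{i}_{t}$ and, for each $r\ge t$, define its truncation $\xi^{(r)}\in\mcU^{i}_{t,r}$ by freezing $\xi$ at its value $\xi_{r}$ for all times beyond $r$. The two strategies generate cost sequences that agree up to time $r$ and differ at later times $s$ by at most $2\bar g(s)$ in absolute value, where $\bar g$ is the summable dominating sequence of Assumption~\ref{assumption:infinite-horizon}. Using monotonicity and conditional translation invariance of the $\rho_{s}$ together with the convergence of the nested mappings on $H_{\mcF}$ (Remark~\ref{rem:ex} and Lemma~\ref{lem:vrho-limit}), the discrepancy $|\varrho_{t}(\text{cost of }\xi^{(r)})-\varrho_{t}(\text{cost of }\xi)|$ is controlled by the tail $2\sum_{s>r}\bar g(s)$, which vanishes as $r\to\infty$. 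Since $\hat V_{t,r}^{i}\le\varrho_{t}(\text{cost of }\xi^{(r)})$, letting $r\to\infty$ gives $\tilde V_{t}^{i}\le\varrho_{t}(\text{cost of }\xi)$; taking the essential infimum over $\xi\in\mcU^{i}_{t}$ yields $\tilde V_{t}^{i}\le V_{t}^{i}$, and hence $\tilde V_{t}^{i}=V_{t}^{i}$.

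Next I would pass to the limit $r\to\infty$ in the recursion \eqref{ekv:recIH}. As $\{\hat V^{j}_{t+1,r}\}_{r}$ is monotone, uniformly bounded by the tail of $\bar g$, and converges to $\tilde V^{j}_{t+1}$, the continuity of $\rho_{t}$ together with the finiteness of the minimum gives the limiting equation $\tilde V_{t}^{i}=\min_{j\in\mcI}\rho_{t}(\tilde g_{i,j}(t)+\tilde V^{j}_{t+1})$. Along the selector $\xi^{*}$ the minimising index is attained, so $\tilde V_{r}^{\xi^{*}_{r-1}}=\rho_{r}(\tilde g_{\xi^{*}_{r-1},\xi^{*}_{r}}(r)+\tilde V^{\xi^{*}_{r}}_{r+1})$ for every $r\ge t$. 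Iterating this identity and using the nested structure of $\rho_{t,n}$ expresses $\tilde V_{t}^{i}=\rho_{t,n}(\tilde g_{i,\xi^{*}_{t}}(t),\ldots,\tilde g_{\xi^{*}_{n-1},\xi^{*}_{n}}(n)+\tilde V^{\xi^{*}_{n}}_{n+1})$ for each $n$. Because the domination of Assumption~\ref{assumption:infinite-horizon} forces $|\tilde V^{j}_{n+1}|\le\sum_{s\ge n+1}\bar g(s)\to0$, letting $n\to\infty$ collapses the terminal term and identifies the right-hand side with $\varrho_{t}$ of the cost of $\xi^{*}$; thus $\xi^{*}$ attains $V_{t}^{i}$ and is optimal.

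The hard part will be the interchange of the limit $r\to\infty$ with the nonlinear risk mappings, both in the truncation estimate and in passing to the limit in \eqref{ekv:recIH}. This is precisely where Assumption~\ref{assumption:infinite-horizon} is essential: the summable domination furnishes the uniform tail control needed for the $H_{\mcF}$ continuity of $\varrho$ from Lemma~\ref{lem:vrho-limit} to apply, making all of the limit exchanges legitimate. By contrast, the measurable selection defining $\xi^{*}$ is routine, since $\mcI$ is finite and each map $\rho_{r}(\tilde g_{\cdot,j}(r)+\tilde V^{j}_{r+1})$ is $\mcG_{r}$-measurable.
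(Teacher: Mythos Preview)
Your argument is correct and the first two stages match the paper's proof closely: both use Lemma~\ref{lem:IHtrunk} and the nesting $\mcU^{i}_{t,r}\subseteq\mcU^{i}_{t,r+1}$ for the monotone limit, and both control the gap $\hat V_{t,r}^{i}-V_{t}^{i}$ by a deterministic tail of the summable dominating sequence from Assumption~\ref{assumption:infinite-horizon} (the paper phrases this via Lemma~\ref{lem:vrho-rec} rather than by truncating a fixed strategy, but the estimate is the same).

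The optimality argument differs in organisation. You first pass to the limit in \eqref{ekv:recIH} to obtain the one-step recursion $\tilde V_{t}^{i}=\min_{j}\rho_{t}(\tilde g_{i,j}(t)+\tilde V^{j}_{t+1})$ and then telescope along $\xi^{*}$, sending the terminal term to zero. The paper instead first establishes the dynamic-programming inequality \eqref{eq:lower-infinite-horizon} and then invokes the finite-horizon Theorem~\ref{thm:vrfctn} on $\{t,\ldots,r\}$ with the time-$r$ running cost augmented by $V^{j}_{r+1}$, so that $\xi^{*}$ is optimal for each truncation; the one-step recursion itself is deferred to Corollary~\ref{cor:recIH}. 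Your route is a little more self-contained, while the paper's yields \eqref{eq:lower-infinite-horizon} as a by-product, which is reused downstream. One point to sharpen in your write-up: ``continuity of $\rho_{t}$'' is not an assumption here; what you actually need is the $L^{\infty}$-Lipschitz property implied by monotonicity plus conditional translation invariance, applied to the \emph{uniform} (deterministic) bound $0\le \hat V^{j}_{t+1,r}-\tilde V^{j}_{t+1}\le 2\sum_{s>r}k_{s}$. To make that uniform bound explicit, take the essential infimum over $\xi$ \emph{before} passing to the limit in $r$ in your second stage (as the paper does), rather than after.
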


\begin{proof}
	From Lemma~\ref{lem:IHtrunk} and as $\mcU^i_{t,r}\subset \mcU^i_{t,r+1}\subset \mcU^i_{t}$ for all $0 \le t \le r$, the sequence $\{\hat V_{t,r}^{i}\}_{r\geq 0}$ is non-increasing and $\hat V_{t,r}^{i}\geq V_t^{i}$ for all $r\geq 0$. Further, as it is bounded from below by the sequence $\{\varrho_t(-\bar g(t),-\bar g(t+1),\ldots)\}_{t\in\bbT}$ (due to monotonicity) and $\{-\bar g(t)\}_{t \in \bbT} \in H_\mcF$, we conclude that the sequence $\big\{\{\hat V_{t,r}^{i}\}_{t\in\bbT,i\in\mcI}\big\}_{r\geq 0}$ converges pointwise.
	
	Now, by Assumption \ref{assumption:infinite-horizon} there is a non-negative decreasing deterministic sequence $\{K_s\}_{s\in \bbT}$, with $\lim_{s \to \infty}K_s = 0$, such that, for all $\xi\in\mcU^i_{t,r}$,
	\begin{align}\nonumber
		&|\varrho_{r+1}(\tilde g_{\xi_{r},\xi_{r+1}}(r+1),\tilde g_{\xi_{r+1},\xi_{r+2}}(r+2),\ldots)|
		\\
		& = \sum_{j\in\mcI}\ett_{\{\xi_r=j\}} |\varrho_{r+1}( g_{j}(r+1),g_{j}(r+2),\ldots)|\leq K_{r+1},\label{ekv:Kbnd}
	\end{align}
	and
	\begin{align}\label{ekv:Kbnd2}
		|\varrho_{r+1}(-\bar g(r+1),-\bar g(r+2),\ldots)|& \leq K_{r+1}.
	\end{align}
	Then, using Lemma \ref{lem:vrho-rec}, \eqref{ekv:Kbnd} gives
	\begin{align*}
		\hat V_{t,r}^{i} &= \essinf_{\xi\in\mcU^i_{t,r}}\varrho_{t}(\tilde g_{\xi_{t-1},\xi_t}(t),\tilde g_{\xi_{t},\xi_{t+1}}(t+1),\ldots)
		\\
		&\leq \essinf_{\xi\in\mcU^i_{t,r}} \rho_{t,r+1}(\tilde g_{\xi_{t-1},\xi_t}(t),\ldots,\tilde g_{\xi_{r-1},\xi_{r}}(r),K_{r+1})
		\\ &=\essinf_{\xi\in\mcU^i_{t,r}} \rho_{t,r}(\tilde g_{\xi_{t-1},\xi_t}(t),\ldots,\tilde g_{\xi_{r-1},\xi_{r}}(r))+K_{r+1},
	\end{align*}
	and \eqref{ekv:Kbnd2} implies that
	\begin{align*}
		V_t^{i}&=\essinf_{\xi\in\mcU^i_t}\varrho_{t}(\tilde g_{\xi_{t-1},\xi_t}(t),\tilde g_{\xi_{t},\xi_{t+1}}(t+1),\ldots)
		\\
		&\geq \essinf_{\xi\in\mcU^i_{t}}\rho_{t,r+1}(\tilde g_{\xi_{t-1},\xi_t}(t),\ldots,\tilde g_{\xi_{r-1},\xi_{r}}(r),-K_{r+1})
		\\
		&= \essinf_{\xi\in\mcU^i_{t,r}}\rho_{t,r}(\tilde g_{\xi_{t-1},\xi_t}(t),\ldots,\tilde g_{\xi_{r-1},\xi_{r}}(r))-K_{r+1},
	\end{align*}
	and we conclude that $\hat V_{t,r}^{i}-V_t^{i}\leq 2K_{r+1}$. Letting $r\to\infty$ gives  the first statement.\\
	
	For the second part, first note that the following inequality holds:
	\begin{equation}\label{eq:lower-infinite-horizon}
		V_t^{i} \ge \essinf_{\xi\in\mcU^i_{t,r}}\rho_{t,r}(\tilde g_{\xi_{t-1},\xi_t}(t),\ldots,\tilde g_{\xi_{r-1},\xi_{r}}(r) + V^{\xi_{r}}_{r+1}), \quad 0 \le t \le r.
	\end{equation}
	Indeed, for every $0 \le t \le r$ and $\xi \in \mcU^i_{t}$ we can use Lemma \ref{lem:vrho-rec} to argue that
	\begin{align*}
		\varrho_{t}\big(\tilde g_{\xi_{t-1},\xi_t}(t),\tilde g_{\xi_{t},\xi_{t+1}}(t+1),\ldots\big) & = \rho_{t,r}\big(\tilde g_{\xi_{t-1},\xi_t}(t),\ldots,\tilde g_{\xi_{r-1},\xi_{r}}(r) + \hat{V}_{r+1,r}^{\xi_{r}}\big) \\
		& \ge \rho_{t,r}\big(\tilde g_{\xi_{t-1},\xi_t}(t),\ldots,\tilde g_{\xi_{r-1},\xi_{r}}(r) + V^{\xi_{r}}_{r+1}\big) \\
		& \ge \essinf_{\xi'\in\mcU^i_{t,r}}\rho_{t,r}\big(\tilde g_{\xi'_{t-1},\xi'_t}(t),\ldots,\tilde g_{\xi'_{r-1},\xi'_{r}}(r) + V^{\xi'_{r}}_{r+1}\big),
	\end{align*}
	and since this is true for every $\xi \in \mcU^i_{t}$ we get \eqref{eq:lower-infinite-horizon}.
	Next, momentarily fix $0 \le t \le r$ and replace $g_{j}(r)$ with $g_{j}(r) + V^{j}_{r+1}$. Then using Theorem \ref{thm:vrfctn} with $T = r$ we have
	\begin{align*}
		V^i_t & \geq \essinf_{\xi\in\mcU^i_{t,r}}\rho_{t,r}(\tilde g_{\xi_{t-1},\xi_t}(t),\ldots,\tilde g_{\xi_{r-1},\xi_{r}}(r) + V^{\xi_{r}}_{r+1}) \\
		& = \rho_{t,r}\big(\tilde g_{i,\xi_{t}^*}(t),\tilde g_{\xi_{t}^*,\xi_{t+1}^*}(t+1),\ldots,\tilde g_{\xi_{r-1}^*,\xi_{r}^*}(r) + V^{\xi_{r}^*}_{r+1}\big)
		\\
		& \ge \rho_{t,r}\big(\tilde g_{i,\xi_{t}^*}(t),\tilde g_{\xi_{t}^*,\xi_{t+1}^*}(t+1),\ldots,\tilde g_{\xi_{r-1}^*,\xi_{r}^*}(r)\big) - K_{r+1}.
	\end{align*}
	Letting $r \to\infty $ we conclude that
	\begin{align*}
		V^i_t & \geq\varrho_{t}(\tilde g_{i,\xi_{t}^*}(t),\tilde g_{\xi_{t}^*,\xi_{t+1}^*}(t+1),\ldots),
	\end{align*}
	from which it follows that $\xi^*$ is an optimal strategy.
\end{proof}
We also record the following corollary which will be used in the proof of Theorem \ref{thm:rbsdeIH}.
\begin{cor}\label{cor:recIH}
	The value process for the infinite horizon optimal switching problem \eqref{eq:value-function-infinite-horizon} satisfies the following dynamic programming principle:
	\[
	V_t^{i} = \essinf_{\xi\in\mcU^i_{t,r}}\rho_{t,r}(\tilde g_{\xi_{t-1},\xi_t}(t),\ldots,\tilde g_{\xi_{r-1},\xi_{r}}(r) + V^{\xi_{r}}_{r+1}), \quad 0 \le t \le r.
	\]
\end{cor}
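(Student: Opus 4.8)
The plan is to obtain the claimed identity by combining the inequality \eqref{eq:lower-infinite-horizon}, already established within the proof of Theorem~\ref{thm:infHOR}, which gives the ``$\ge$'' direction, with a matching ``$\le$'' direction proved by an explicit strategy construction. Throughout I fix $0 \le t \le r$ and $i \in \mcI$. For the reverse inequality it suffices, by the characterisation of the essential infimum as a greatest lower bound, to show that for \emph{every} $\xi \in \mcU^i_{t,r}$ one has
\[
V_t^i \le \rho_{t,r}\big(\tilde g_{\xi_{t-1},\xi_t}(t),\ldots,\tilde g_{\xi_{r-1},\xi_{r}}(r) + V^{\xi_{r}}_{r+1}\big);
\]
taking the essential infimum over $\xi$ on the right then yields $V^i_t \le \essinf_{\xi\in\mcU^i_{t,r}}(\cdots)$, which together with \eqref{eq:lower-infinite-horizon} gives equality.

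To prove this pointwise bound I would, for each mode $j \in \mcI$, invoke Theorem~\ref{thm:infHOR} to fix an optimal strategy $\eta^{(j)} \in \mcU^j_{r+1}$, so that $\varrho_{r+1}(\tilde g_{j,\eta^{(j)}_{r+1}}(r+1),\ldots) = V^j_{r+1}$. Given $\xi \in \mcU^i_{t,r}$ I then splice these continuations onto $\xi$ by setting $\zeta_s \coloneqq \xi_s$ for $s \le r$ and $\zeta_s \coloneqq \sum_{j\in\mcI}\ett_{\{\xi_r = j\}}\eta^{(j)}_s$ for $s > r$. Since $\{\xi_r = j\}\in\mcG_r \subseteq \mcG_s$ for $s \ge r$ and each $\eta^{(j)}$ is $\bbG$-adapted, the spliced process $\zeta$ is again a $\bbG$-adapted strategy, and $\zeta_{t-1} = \xi_{t-1} = i$, so $\zeta \in \mcU^i_t$.

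With this $\zeta$ in hand, the definition \eqref{eq:value-function-infinite-horizon} of $V^i_t$ as an essential infimum over $\mcU^i_t$ gives $V^i_t \le \varrho_t(\tilde g_{\zeta_{t-1},\zeta_t}(t),\tilde g_{\zeta_t,\zeta_{t+1}}(t+1),\ldots)$. I would then apply the recursion of Lemma~\ref{lem:vrho-rec} to split this aggregated risk mapping at time $r$, obtaining
\[
\varrho_t(\tilde g_{\zeta_{t-1},\zeta_t}(t),\ldots) = \rho_{t,r}\big(\tilde g_{\xi_{t-1},\xi_t}(t),\ldots,\tilde g_{\xi_{r-1},\xi_r}(r) + \varrho_{r+1}(\tilde g_{\zeta_r,\zeta_{r+1}}(r+1),\ldots)\big),
\]
where I have used that $\zeta$ and $\xi$ agree on $\{t-1,\ldots,r\}$; the cost sequences involved are dominated by $\bar g \in H_\mcF$ under Assumption~\ref{assumption:infinite-horizon}, so all these mappings are well-defined. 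Finally, the locality of $\varrho_{r+1}$ with respect to the finite $\mcG_{r+1}$-measurable partition $\{\{\xi_r = j\}\}_{j\in\mcI}$ (the same manipulation used in the proof of Theorem~\ref{thm:vrfctn}) together with the optimality of each $\eta^{(j)}$ yields $\varrho_{r+1}(\tilde g_{\zeta_r,\zeta_{r+1}}(r+1),\ldots) = \sum_{j}\ett_{\{\xi_r=j\}}V^j_{r+1} = V^{\xi_r}_{r+1}$, which establishes the desired pointwise bound.

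The step I expect to require the most care is the splicing, namely verifying that $\zeta$ is a legitimate $\bbG$-adapted strategy and that $\varrho_{r+1}$ restricted to the continuation of $\zeta$ decomposes correctly over the partition $\{\xi_r = j\}$; this locality is precisely where the axioms of the conditional risk mapping (monotonicity and conditional translation invariance) are used, exactly as in the indicator manipulation in the proof of Theorem~\ref{thm:vrfctn}. If one prefers to avoid invoking the existence of exactly optimal continuations, the same conclusion follows by selecting, for each $j$, an $\varepsilon$-optimal $\eta^{(j)}$, carrying an additive $\varepsilon$ through the translation-invariant mapping $\rho_{t,r}$, and letting $\varepsilon \downarrow 0$.
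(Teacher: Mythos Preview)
Your argument is correct, but it follows a genuinely different route from the paper's. The paper first reduces to the one-step recursion
\[
V^i_t = \min_{j\in\mcI}\rho_t\big(\tilde g_{i,j}(t) + V^j_{t+1}\big),
\]
proving the ``$\le$'' half by taking limits in~\eqref{ekv:recIH} and using the bound $\hat V^j_{t+1,r} \le V^j_{t+1} + 2K_{r+1}$ from the proof of Theorem~\ref{thm:infHOR}, and the ``$\ge$'' half as the special case $r=t$ of~\eqref{eq:lower-infinite-horizon}. With the one-step recursion in hand, the general identity for arbitrary $r$ follows simply by applying the finite-horizon verification Theorem~\ref{thm:vrfctn} on $\{t,\ldots,r\}$ with the terminal cost $g_j(r)$ replaced by $g_j(r) + V^j_{r+1}$: both sides satisfy the same backward recursion with the same terminal condition, hence coincide.

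Your approach is instead a direct pasting argument for the ``$\le$'' direction: you splice optimal continuations $\eta^{(j)}$ onto an arbitrary $\xi \in \mcU^i_{t,r}$ and use conditional locality of $\varrho_{r+1}$ over the partition $\{\xi_r=j\}_{j\in\mcI}$. This is the classical proof of the dynamic programming principle and is entirely self-contained, not relying on the limiting approximation $\hat V_{t,r} \searrow V_t$. The paper's argument is shorter because it recycles the finite-horizon machinery and the $K_r$ estimates already in place; yours is more transparent about \emph{why} the DPP holds and would work equally well in settings where no natural truncated approximation is available. One small point: what you call ``the recursion of Lemma~\ref{lem:vrho-rec}'' needs to be iterated $r-t+1$ times (or you can cite the remark after Lemma~\ref{lem:vrho-rec} that recursivity extends to $\varrho_s$), but this is routine.
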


\begin{proof}
	We only need to prove that the following recursion holds:
	\begin{align}\label{eq:infrec}
		V^i_t=\min_{j\in\mcI} \rho_t(\tilde g_{i,j}(t)+V^{j}_{t+1}),
	\end{align}
	since the general result then follows from Theorem~\ref{thm:vrfctn} with $T = r$ and replacing $g_{j}(r)$ with $g_{j}(r) + V^{j}_{r+1}$. Taking limits on both sides in \eqref{ekv:recIH} gives
	\begin{align*}
		V^i_t&=\lim_{r\to\infty}\min_{j\in\mcI} \rho_t(\tilde g_{i,j}(t)+\hat V^{j}_{t+1,r})
		\\
		&\leq \lim_{r\to\infty}\min_{j\in\mcI} \rho_t(\tilde g_{i,j}(t)+ V^{j}_{t+1}+2K_{r+1})
		\\
		&=\lim_{r\to\infty}\{\min_{j\in\mcI} \rho_t(\tilde g_{i,j}(t)+ V^{j}_{t+1})+2K_{r+1}\}
		\\
		&=\min_{j\in\mcI} \rho_t(\tilde g_{i,j}(t)+ V^{j}_{t+1}).
	\end{align*}
	Since the reverse inequality follows as special case of \eqref{eq:lower-infinite-horizon}, the proof is complete.
\end{proof}

\subsection{Relation to systems of RBS$\Delta$Es}
\begin{defn}[Infinite horizon RBS$\Delta$Es]
	The infinite-horizon extension of Definition \ref{def:RBSDE} (with $\bbT=\mathbb N_0$) is given by:
	\begin{equation}\label{ekv:rbsdeIH}
		\begin{cases}
			Y^i_t = Y^{i}_{T} + \sum_{s=t}^{T-1}\rho_s(g_{i}(s)+\Delta M^i_{s+1})-(M^i_T-M^i_t)
			\\
			\qquad-(A^i_{T}-A^i_t),\quad \forall \;  t,T \in \mathbb{T}\; \text{with}\; t \le T,\\
			Y^i_t \leq  \min_{j\in\mcI^{-i}} \rho_t(\tilde{g}_{i,j}(t)+Y^j_{t+1}), \quad\forall \;   t \in \mathbb{T}, \\
			\sum_{t \in \mathbb{T} }\big(Y^i_t - \min_{j\in\mcI^{-i}} \rho_t(\tilde{g}_{i,j}(t)+Y^j_{t+1})\big)\Delta A^i_{t+1}=0.
		\end{cases}
	\end{equation}
	A solution to the system of RBS$\Delta$Es~\eqref{ekv:rbsdeIH} is a triple $(Y,M,A)\in \mcL^{\infty,d}_\bbG\times (\mcL_\mathbb{G}^{\infty})^2$ with $M$
	a $\rho_{s,t}$-martingale and $A$ a $\bbG$-predictable non-decreasing process.
\end{defn}

\begin{rem}
	In the special case when the limits $M_\infty=\lim_{t\to\infty}M_t$ and $A_\infty=\lim_{t\to\infty}A_t$ exist $\Prob$-a.s.~as members of $L^\infty_{\mcG}$, the infinite-horizon RBS$\Delta$E \eqref{ekv:rbsdeIH} can be written
	\begin{equation}\label{ekv:rbsdeIH2}
		\begin{cases}
			Y^i_t = \sum_{s=t}^{\infty}\rho_s(g_{i}(s)+\Delta M^i_{s+1})-(M^i_\infty-M^i_t) \\
			\qquad -(A^i_{\infty}-A^i_t),\quad\forall \;   t \in \mathbb{T},\\
			Y^i_t \leq  \min_{j\in\mcI^{-i}} \rho_t(\tilde{g}_{i,j}(t)+ Y^j_{t+1}), \quad\forall \;   t \in \mathbb{T},\\
			\sum_{t \in \mathbb{T}} (Y^i_t - \min_{j\in\mcI^{-i}} \rho_t(\tilde{g}_{i,j}(t)+ Y^j_{t+1}))\Delta A_{t+1}=0.
		\end{cases}
	\end{equation}
	We also emphasise that $Y \in \mcL^{\infty,d}_\bbG$ implies the boundary condition $\lim_{T \to \infty}Y^{i}_{T} = 0$ for all $i \in \mcI$.
\end{rem}

We have the following extension of Theorem \ref{thm:rbsde}:

\begin{thm}\label{thm:rbsdeIH}
	The system of RBS$\Delta$Es \eqref{ekv:rbsdeIH} has a unique solution. Furthermore, the solution satisfies $Y=V$.
\end{thm}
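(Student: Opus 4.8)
The plan is to mirror the two-part argument of Theorem~\ref{thm:rbsde}, replacing the terminal condition at time $T$ by the decay condition encoded in the requirement $Y \in \mcL^{\infty,d}_\bbG$ and controlling every passage to the limit $r \to \infty$ by means of the deterministic bounds $\{K_s\}_{s\in\bbT}$ (with $K_s \to 0$) arising from Assumption~\ref{assumption:infinite-horizon}, exactly as in the proof of Theorem~\ref{thm:infHOR}. For \emph{existence} I would take $Y = V$, the infinite-horizon value process, and note first that $V \in \mcL^{\infty,d}_\bbG$: monotonicity gives $\varrho_t(-\bar g(t),\ldots) \le V^i_t \le \varrho_t(\bar g(t),\ldots)$, so $|V^i_t| \le K_t \to 0$ by \eqref{ekv:Kbnd}--\eqref{ekv:Kbnd2}. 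By Corollary~\ref{cor:recIH}, $V$ satisfies the one-step recursion $V^i_t = \min_{j\in\mcI}\rho_t(\tilde g_{i,j}(t)+V^j_{t+1})$, which is all that the finite-horizon existence argument actually used. I would then define $A^i$ and $M^i$ by the identical formulas $A^i_0 = 0$, $A^i_t = A^i_{t-1} + \rho_{t-1}(g_i(t-1)+V^i_t) - V^i_{t-1}$ and $\Delta M^i_{t+1} = V^i_{t+1} - \rho_t(V^i_{t+1})$. The recursion makes $A^i$ $\bbG$-predictable and non-decreasing, the Doob decomposition makes $M^i$ a $\rho_{s,t}$-martingale, and both lie in $\mcL^\infty_\bbG$ since each increment is essentially bounded. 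The algebraic identity from Theorem~\ref{thm:rbsde} then holds verbatim for every finite $T \ge t$, giving the first line of \eqref{ekv:rbsdeIH}, while the reflection inequality and Skorokhod condition follow as before from the one-step recursion and the vanishing of $\Delta A^i$ off the contact set.

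For \emph{uniqueness}, given any solution $(Y,N,B)$ the one-step computation \eqref{eq:One-Step-M-A-RBSDE} carries over unchanged, so $\Delta N^i_{t+1} = Y^i_{t+1} - \rho_t(Y^i_{t+1})$ and $\Delta B^i_{t+1} = \rho_t(g_i(t)+Y^i_{t+1}) - Y^i_t$; hence $(N,B)$ is determined by $Y$ and it suffices to prove $Y = V$. The same manipulation as in Theorem~\ref{thm:rbsde} upgrades the reflection inequality to $Y^i_t \le \min_{j\in\mcI}\rho_t(\tilde g_{i,j}(t)+Y^j_{t+1})$. To obtain $Y \le V$ I would fix $r$, treat $Y_{r+1}$ as a terminal condition, and run the finite backward induction of Theorem~\ref{thm:vrfctn} on $\{t,\ldots,r\}$ (with $g_j(r)$ replaced by $g_j(r)+Y^j_{r+1}$) to get $Y^i_t \le \essinf_{\xi\in\mcU^i_{t,r}}\rho_{t,r}(\tilde g_{\xi_{t-1},\xi_t}(t),\ldots,\tilde g_{\xi_{r-1},\xi_r}(r)+Y^{\xi_r}_{r+1})$. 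Since $Y\in\mcL^{\infty,d}_\bbG$ supplies a sequence $\tilde K_{r+1}\to 0$ with $|Y^j_{r+1}| \le \tilde K_{r+1}$, while $|V^j_{r+1}|\le K_{r+1}$, conditional translation invariance together with Corollary~\ref{cor:recIH} yields $Y^i_t \le V^i_t + (K_{r+1}+\tilde K_{r+1})$, and letting $r\to\infty$ gives $Y^i_t \le V^i_t$.

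For the reverse inequality I would reuse the Skorokhod construction, defining jump times $\bar\tau_k = \inf\{s > \bar\tau_{k-1}\colon \Delta B^{\beta_{k-1}}_{s+1}>0\}$ (now without truncation at $T$, so $\bar\tau_k = \infty$ is permitted) and the associated arg-min selections $\bar\beta_k$, assembling them into a single strategy $\bar\xi \in \mcU^i_t$. Between jumps $\Delta B$ vanishes, giving $Y^i_s = \rho_s(g_i(s)+Y^i_{s+1})$, while at each jump the Skorokhod condition forces the switching identity; since each finite window $\{t,\ldots,r\}$ contains only finitely many jumps, the finite recursion gives $Y^i_t = \rho_{t,r}(\tilde g_{i,\bar\xi_t}(t),\ldots,\tilde g_{\bar\xi_{r-1},\bar\xi_r}(r)+Y^{\bar\xi_r}_{r+1})$ for every $r$. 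As the costs along $\bar\xi$ are dominated by $\bar g \in H_\mcF$ and $|Y^{\bar\xi_r}_{r+1}| \le \tilde K_{r+1}\to 0$, letting $r\to\infty$ and invoking Remark~\ref{rem:ex} gives $Y^i_t = \varrho_t(\tilde g_{i,\bar\xi_t}(t),\tilde g_{\bar\xi_t,\bar\xi_{t+1}}(t+1),\ldots) \ge V^i_t$; combining the two bounds yields $Y = V$, and the one-step formulas then identify $(N,B)$ with $(M,A)$. The principal difficulty, absent in the finite-horizon case, is exactly the control of these two limits $r\to\infty$: the terminal condition at $T$ is replaced by the transversality property $\lim_{T\to\infty}Y^i_T = 0$ built into $\mcL^{\infty,d}_\bbG$, and the convergence of the nested maps $\rho_{t,r}$ to $\varrho_t$ must be justified through the summable domination $\bar g \in H_\mcF$. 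Care is also required because $\bar\xi$ may switch infinitely often, which is harmless provided one evaluates $\rho_{t,r}$ only on finite horizons before passing to the limit.
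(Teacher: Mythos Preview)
Your existence argument is essentially the paper's: both take $Y = V$, invoke the one-step recursion from Corollary~\ref{cor:recIH}, and define $M,A$ by the same increment formulas, noting that these do not depend on the truncation level $T$.

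For uniqueness your route is correct but longer than the paper's. Rather than re-running the two halves of the finite-horizon uniqueness argument (backward induction for $Y \le V$, the Skorokhod-driven strategy $\bar\xi$ for $Y \ge V$) and passing each side to the limit separately, the paper observes that any solution $(Y,N,B)$ of \eqref{ekv:rbsdeIH}, restricted to $\{0,\ldots,T\}$, solves the finite-horizon system \eqref{ekv:rbsde} with $g_j(T)$ replaced by $g_j(T)+Y^j_{T+1}$. A single invocation of Theorem~\ref{thm:rbsde} then gives
\[
Y^i_t = \essinf_{\xi\in\mcU^i_{t}}\rho_{t,T}\bigl(\tilde g_{\xi_{t-1},\xi_t}(t),\ldots,\tilde g_{\xi_{T-1},\xi_{T}}(T)+Y^{\xi_T}_{T+1}\bigr),
\]
and since Corollary~\ref{cor:recIH} provides the identical representation for $V^i_t$ with terminal datum $V_{T+1}$, both inequalities $|Y^i_t - V^i_t| \le K_{T+1}$ follow symmetrically from $\sup_j\|Y^j_{T+1}-V^j_{T+1}\|_\infty \le K_{T+1}$ together with monotonicity and translation invariance; letting $T\to\infty$ concludes. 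This bypasses the Skorokhod strategy construction entirely. Your approach, by contrast, has the merit of explicitly exhibiting a strategy $\bar\xi\in\mcU^i_t$ that attains $Y^i_t$, which is of independent interest but not needed for the bare uniqueness statement.
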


\begin{proof}
	\emph{Existence:} By Corollary~\ref{cor:recIH} the value process $V$ satisfies the following dynamic programming relation for any $T\in\bbT$:
	\begin{align*}
		V_t^{i}&=\essinf_{\xi\in\mcU^i_{t,T}}\rho_{t,T}(\tilde g_{\xi_{t-1},\xi_t}(t),\ldots,\tilde g_{\xi_{T-1},\xi_{T}}(T)+V^{\xi_{T}}_{T+1}), \quad 0 \le t \le T.
	\end{align*}
	Using Theorem~\ref{thm:rbsde}, this implies for every $T\in\bbT$ that $(V,M,A)$ is the unique solution to
	\[
	\begin{cases}
		V^i_t =  V^i_T+\sum_{s=t}^{T-1}\rho_s(g_{i}(s)+\Delta  M^i_{s+1})-( M^i_T- M^i_t)-( A^i_{T}- A^i_t),\\
		\qquad t=0,\ldots,T,\\
		V^i_t \leq  \min_{j\in\mcI^{-i}} \rho_t(\tilde g_{i,j}(t)+ V^j_{t+1}),\quad t=0,\ldots,T\\
		\sum_{t=0}^T( V^i_t - \min_{j\in\mcI^{-i}} \rho_t(\tilde g_{i,j}(t)+ V^j_{t+1}))\Delta  A^i_{t+1}=0,
	\end{cases}
	\]
	where $M^{i}_0 = A^{i}_0 = 0$ and
	\begin{align*}
		\left\{\begin{array}{l} \Delta M^{i}_{t+1}=V^i_{t+1}-\rho_t(V^i_{t+1}),
			\\
			\Delta A^{i}_{t+1}=\rho_{t}(g_i(t)+V^i_{t+1})-V^i_t.
		\end{array}\right.
	\end{align*}
	Furthermore, since this unique definition for the vector-valued processes $M$ and $A$ is independent of $T$, it follows that $(V,M,A)$ satisfies equation \eqref{ekv:rbsdeIH}.
	
	By the proof of Theorem \ref{thm:infHOR}, there exists a decreasing deterministic sequence $\{K_{t}\}_{t \in \mathbb{T}}$ such that $|V^i_T| \le K_{T}$ and $\lim_{T \to \infty}K_{T} = 0$. Therefore $V \in \mcL_\bbG^{\infty,d}$ and we conclude that $(V,M,A)$ is a solution to \eqref{ekv:rbsdeIH}.
	
	\vskip0.1em
	\emph{Uniqueness:} To show uniqueness, we note that if $(Y,N,B)$ is any other solution to~\eqref{ekv:rbsdeIH} then by again truncating at time $T \ge t$ and applying Theorem~\ref{thm:rbsde} we have that
	\begin{align*}
		Y_t^{i}&=\essinf_{\xi\in\mcU^i_t}\rho_{t,T}(\tilde g_{\xi_{t-1},\xi_t}(t),\ldots,\tilde g_{\xi_{T-1},\xi_{T}}(T)+Y^{\xi_{T}}_{T+1}).
	\end{align*}
	Since $Y, V \in\mcL_\bbG^{\infty,d}$ and $\mathcal{I}$ is finite, we can define a deterministic sequence $\{K_s\}_{s\in\bbT}$ with $K_s\to 0$ as $s\to\infty$ such that $|V^{i}_t-Y^{i}_t|\leq K_t$ for all $t\in\bbT$ and $i \in \mathcal{I}$. Appealing once more to the dynamic programming relation it follows that
	\begin{align*}
		Y_t^{i} & \leq \essinf_{\xi\in\mcU^i_t}\rho_{t,T}(\tilde g_{\xi_{t-1},\xi_t}(t),\ldots,\tilde g_{\xi_{T-1},\xi_{T}}(T)+V^{\xi_{T}}_{T+1}+K_{T+1}) \\
		& = V_t^{i} + K_{T+1},
	\end{align*}
	and similarly we have that $V_t^{i}\leq Y_t^{i}+K_{T+1}$. Letting $T\to\infty$ we find that $V_t^{i} = Y_t^{i}$ for all $i \in \mathcal{I}$ and uniqueness follows.
\end{proof}

\subsection{Relation to optimal stopping}
As an extension to Section~\ref{sec:opti-stop} above, we specialise to the case of optimal stopping on an infinite horizon:
\begin{equation}\label{ekv:optStopIH}
	F_t\coloneqq\essinf_{\tau\in\mathscr{T}_{t}}\rho_{t,\tau}(f(t),\ldots,f(\tau-1),h(\tau)).
\end{equation}
The above result for infinite-horizon optimal switching problems naturally extends the results in Section~\ref{sec:opti-stop} on optimal stopping in finite horizon to infinite horizon. We have the following:\\

\begin{cor}
	The value process $F$ satisfies the dynamic programming relation
	\begin{align*}
		F_{t}=\rho_t(f(t)+F_{t+1})\wedge\rho_t(h(t)),
	\end{align*}
	for all $t\in\bbT$ and an optimal stopping time $\tau^*_t$ is given by
	\begin{align*}
		\tau^*_t\coloneqq \inf\{s\geq t:F_s=\rho_s(h(s))\}.
	\end{align*}
	Furthermore, there exists a $\rho_{s,t}$-martingale $M$ and a non-decreasing $\bbG$-predictable process $A$ such that $(F,M,A)$ is the unique solution to the RBS$\Delta$E
	\begin{equation}\label{ekv:rbsdeinfHOROS}
		\begin{cases}
			F_t = F_{T} +\sum_{s=t}^{T-1}\rho_s(f(s)+\Delta M_{s+1})-(M_T-M_t)
			\\
			\qquad-(A_{T}-A_t),\quad \forall \;  t \in \mathbb{T} \;\text{and}\; T \in \mathbb{T}\; \text{with}\; t \le T,\\
			F_t \leq  \rho_t(h(t)), \quad\forall \; t \in \mathbb{T}, \\
			\sum_{t \in \mathbb{T}}(F_t - \rho_t(h(t)))\Delta A_{t+1}=0.
		\end{cases}
	\end{equation}
\end{cor}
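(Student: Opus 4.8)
The plan is to mirror the finite-horizon argument of Theorem~\ref{Thm:Risk-Averse-Value-ch-Backward-Induction}, reducing \eqref{ekv:optStopIH} to the two-mode infinite-horizon switching problem \eqref{eq:value-function-infinite-horizon} and then invoking the switching results of this section. Concretely, I would take $\mcI(1)=\{1,2\}$ and $\mcI(2)=\{2\}$ with $g_1\equiv f$, $c_{1,2}\equiv h$ and $g_2\equiv c_{2,1}\equiv 0$, and optimise over strategies with $\xi_{t-1}=1$; unlike the finite-horizon case no terminal identification $g_1(T)=h(T)$ is needed, since there is no forced stopping at a horizon. Before anything else one must check that the induced costs $\tilde g_{i,j}$ satisfy Assumption~\ref{assumption:infinite-horizon}, which amounts to assuming $f$ and $h$ are dominated by a common sequence in $H_\mcF$; this guarantees that $\varrho_t$ is a genuine limit (Remark~\ref{rem:ex}).

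The first substantive step is to identify the two value processes. A strategy that switches from mode $1$ to mode $2$ at a finite time $\tau$ incurs the cost sequence $(f(t),\ldots,f(\tau-1),h(\tau),0,0,\ldots)$, and by normalisation and conditional translation invariance the trailing zeros are inert, so that
\[
\varrho_t(f(t),\ldots,f(\tau-1),h(\tau),0,0,\ldots)=\rho_{t,\tau}(f(t),\ldots,f(\tau-1),h(\tau)).
\]
Since $\mcI(2)=\{2\}$ forces $V^2\equiv 0$, the mode-$1$ switching value coincides with $F_t$. The dynamic programming relation then drops out of Corollary~\ref{cor:recIH}: the recursion $V_t^{i}=\min_{j}\rho_t(\tilde g_{i,j}(t)+V^j_{t+1})$ collapses, in mode $1$, to $F_t=\rho_t(f(t)+F_{t+1})\wedge\rho_t(h(t))$. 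The optimal strategy furnished by Theorem~\ref{thm:infHOR} switches to mode $2$ precisely when the $\argmin$ first selects $j=2$, i.e.\ when $\rho_s(h(s))\le\rho_s(f(s)+F_{s+1})$, which by the relation just derived is exactly the event $\{F_s=\rho_s(h(s))\}$; this identifies the switching time with $\tau^*_t$ and hence shows optimality of $\tau^*_t$.

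For the RBS$\Delta$E I would specialise the system \eqref{ekv:rbsdeIH} to this two-mode problem via Theorem~\ref{thm:rbsdeIH}. The mode-$2$ component is degenerate: $V^2\equiv 0$ gives $\Delta M^2_{t+1}=V^2_{t+1}-\rho_t(V^2_{t+1})=0$ and $\Delta A^2_{t+1}=\rho_t(V^2_{t+1})-V^2_t=0$, so $M^2\equiv A^2\equiv 0$ and, with the convention $\mcI(2)^{-2}=\emptyset$, the reflection constraint on $Y^2$ is vacuous. After writing $M\coloneqq M^1$, $A\coloneqq A^1$ and using $\tilde g_{1,2}=h$ together with $Y^2\equiv 0$, the mode-$1$ component of \eqref{ekv:rbsdeIH} is then identical to \eqref{ekv:rbsdeinfHOROS}; existence of $(F,M,A)$ and its uniqueness follow directly from Theorem~\ref{thm:rbsdeIH}.

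The step I expect to require the most care is confirming that the switching value $V^1_t$---which by definition includes the constant strategy $\xi\equiv 1$ (never switching)---coincides with the stopping value $F_t$ of \eqref{ekv:optStopIH}, whose infimum ranges only over finite-valued stopping times. The reconciliation rests on the $H_\mcF$ domination: since $h(\tau)\to 0$ in $L^\infty$ as $\tau\to\infty$, the stopping payoff $\rho_{t,\tau}(f(t),\ldots,f(\tau-1),h(\tau))$ converges to the never-switch payoff $\varrho_t(f(t),f(t+1),\ldots)$, so the constant strategy is captured in the limit and no gap opens between the two infima. This limiting argument---implicit in the bound $|V_T^{i}|\le K_T\to 0$ from the proof of Theorem~\ref{thm:infHOR}---is the one genuinely infinite-horizon ingredient beyond a transcription of the finite-horizon reasoning.
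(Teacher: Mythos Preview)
Your proposal is correct and follows exactly the route the paper intends: reduce the infinite-horizon stopping problem to the two-mode switching problem via the identification of Section~\ref{sec:opti-stop}, then read off the dynamic programming relation, the optimal strategy, and the RBS$\Delta$E from the infinite-horizon switching results (Theorem~\ref{thm:infHOR}, Corollary~\ref{cor:recIH}, Theorem~\ref{thm:rbsdeIH}). The paper's own proof is a single sentence citing only Theorem~\ref{thm:infHOR} and the analogy of Section~\ref{sec:opti-stop}; you have simply made explicit the pieces that sentence compresses, including the appeal to Theorem~\ref{thm:rbsdeIH} for the RBS$\Delta$E claim and the observation that the never-stop strategy is recovered as a limit of finite stopping times under Assumption~\ref{assumption:infinite-horizon}, a point the paper leaves implicit.
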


\begin{proof} This follows immediately from Theorem~\ref{thm:infHOR} through the analogy between optimal switching problems and optimal stopping problems described in Section~\ref{sec:opti-stop}.\end{proof}


\section{Example: Delayed or missing observations}
\label{sec:num-ex}

In this section we aim to add some colour to the above results by illustrating the interplay between delayed or missing observations and risk awareness. We demonstrate that this issue should be treated differently than in the case of linear expectation, otherwise suboptimal actions may result.

Let $(\Omega, \mcF, \mathbb{F},\PP)$ be a filtered probability space and consider either the finite or infinite horizon problem above. Let the process of essentially bounded costs $(\tilde g_{i,j}(t), i,j \in \mathcal{I})_{t \in \mathbb{T}}$ be adapted (in the infinite horizon case, also satisfying Assumption \ref{assumption:infinite-horizon}) and let $\rho^\bbF$ be an $\mathbb{F}$-conditional risk mapping.
Suppose that the observation at some time $s$ is delayed. To model this, let ${\mathbb{G}}$ be the filtration given by
\[ {\mcG_t} =
\begin{cases}
	\mcF_{s-1}, & t = s, \\
	\mcF_{t}, & \text{otherwise,}
\end{cases}
\]
and let $\rho$ be the conditional risk mapping given by
\[ {\rho}_t =
\begin{cases}
	\rho^\bbF_{s-1}, & t = s, \\
	\rho^\bbF_{t}, & \text{otherwise.}
\end{cases}
\]
Indeed, since we will examine the decision taken at time $s$ rather than at later times, the observation at time $s$ may equivalently be missing rather than delayed.

For any time $t \in \mathbb{T}$ with $t \neq s$, the value processes at time $t$ are given by the dynamic programming equations \eqref{ekv:VFrec} or \eqref{eq:infrec} and conditional translation invariance:
\begin{align}\label{eq:vpt0}
	\hat V_t^{i}=\min_{j\in\mcI} \left(\tilde g_{i,j}(t)+ \rho_t(\hat V^{j}_{t+1})\right),
\end{align}
while the missing observation at time $s$ means that
\begin{align}\label{eq:vpt}
	\hat V_s^{i}&=\min_{j\in\mcI} \rho_s \left(\tilde g_{i,j}(s)+ \hat V^{j}_{s+1}\right), \\
	\xi_s^i &\in {\arg\min}_{j \in \mcI} \rho_s(\tilde g_{i,j}(s)+\hat V^{j}_{s+1}). \label{eq:goodxi}
\end{align}
When $\rho^\bbF$ is the linear (conditional) expectation, this is equivalent to
the following value and choice of mode:
\begin{align}
	\check V_s^{i}=\min_{j\in\mcI} \left({\rho}_s(\tilde g_{i,j}(s))+ {\rho}_s(\hat V^{j}_{s+1})\right), \\
	\check{\xi}_s^i \in {\arg\min}_{j \in \mcI}\left({\rho}_s(\tilde g_{i,j}(s)) + {\rho}_s(\hat V^{j}_{s+1})\right). 	\label{eq:badxi}
\end{align}
The intuitively obvious fact that the selections \eqref{eq:goodxi} and \eqref{eq:badxi} may differ can be confirmed by suitably modifying the costs at time $s$, as follows.
For $f \in m\mathcal{F}$ define
\begin{align}
	\begin{cases}
		\check C_{i,j}(f) &= {\rho}_s(\tilde g_{i,j}(s) )+ {\rho}_s(\hat V^{j}_{s+1}) - f, \\
		\hat C_{i,j}(f) &= {\rho}_s(\tilde g_{i,j}(s) + \hat V^{j}_{s+1}) - f.
	\end{cases}
	\label{eq:chatcheck}
\end{align}

We assume that
\begin{align}\label{eq:order}
	\check C_{i,j}(0) > \hat C_{i,j}(0) \text{ for each } i,j \in \mcI,
\end{align}
(which is true for example if the risk mapping $\rho^\bbF$ is subadditive),
and that for some $l \in \mcI$ we have
\begin{equation} \label{eq:assum1}
	\PP\left(\check C_{l,1}(0) - \hat C_{l,1}(0) = \check C_{l,2}(0) - \hat C_{l,2}(0) \; \right) < 1,
\end{equation}
setting $l = 1$ without loss of generality.
\begin{rem}
	Clearly, these assumptions fail when $\rho^\mathbb{F}$ is linear (and in the finite horizon case, they require that $s < T$). They can be understood as ensuring that $\rho^\mathbb{F}$ is `sufficiently nonlinear' on the problem data. The inequality \eqref{eq:order} serves to reduce combinatorial complexity.
\end{rem}
We argue as follows:
\begin{enumerate}
	\item Defining for each $n \in \mathbb{N}$ and $i=1,2$ the events
	\begin{align}\label{eq:an}
		A^i_n = \{\omega \in \Omega: \check C_{1,3-i}(0) - \hat C_{1,3-i}(0) > \check C_{1,i}(0) - \hat C_{1,i}(0) + 2/n\},
	\end{align}
	by assumption \eqref{eq:assum1} at least one of these events ($A^1_n$, say) has positive probability.
	\item Setting $f_{1,1} = \check C_{1,1}(0) - \check C_{1,2}(0) + 1/n$, we have
	\begin{align}\label{eq:choosef}
		\check C_{1,2}(0) = \check C_{1,1}(f_{1,1}) + 1/n.
	\end{align}
	\item We now further reduce combinatorial complexity by adjusting costs so that under both selections \eqref{eq:goodxi} and \eqref{eq:badxi}, when started in state $i=1$ at time $s-1$, at time $s$ only either remaining in state 1 or switching to mode 2 can be optimal. That is, we would like the following to hold:
	\begin{align}
		\begin{cases}
			&{\arg\min}_{j \in \mcI}\left\{\hat C_{1,j}(\bar f_{1,j})\right\} \subset \{1,2\}, \\
			&{\arg\min}_{j \in \mcI}\left\{\check C_{1,j}(\bar f_{1,j})\right\} \subset \{1,2\}.
		\end{cases} \label{eq:sel12}
	\end{align}
	By straightforward linear algebra and \eqref{eq:order}, this can be achieved by taking
	\begin{align}\label{eq:shift}
		\bar f = 1 + & \esssup \{\check C_{1,1}(f_{1,1}), \hat C_{1,1}(f_{1,1}), \check C_{1,2}(0),  \hat C_{1,2}(0)\} - \essinf_{k>2} \{\check C_{1,k}(0), \hat C_{1,k}(0) \} \nonumber\\
		= 1 + & \esssup \{\check C_{1,1}(f_{1,1}), \check C_{1,2}(0)\} -  \essinf_{k>2} \{\hat C_{1,k}(0)\}.
	\end{align}
	\item Finally, to observe a difference between the selections \eqref{eq:goodxi} and \eqref{eq:badxi}, set
	\begin{align}\label{eq:deff1j}
		\bar f_{1,j} =
		\begin{cases}
			\bar f+ f_{1,1}, & j=1, \\
			\bar f, & j=2, \\
			0, & j>2,
		\end{cases}
	\end{align}
	since then on $A^1_n$ we have
	\begin{align}\label{eq:bigorder}
		\check C_{1,2}(\bar f_{1,2}) > \check C_{1,1}(\bar f_{1,1}) > \hat C_{1,1}(\bar f_{1,1}) > \hat C_{1,2}(\bar f_{1,2}),
	\end{align}
	where the first inequality comes from combining \eqref{eq:chatcheck}, \eqref{eq:choosef} and \eqref{eq:deff1j}, the second from \eqref{eq:order}, and the third from combining \eqref{eq:an} with \eqref{eq:choosef}.
\end{enumerate}

Noting that $\bar f_{1,1}$ and  $\bar f_{1,2}$ are $\mcG_s$-measurable, it follows from \eqref{eq:chatcheck} that the two selections differ if we modify just the two costs $\tilde g_{1,1}(s)$ and $\tilde g_{1,2}(s)$ by replacing $\tilde g_{1,j}(s)$ with $\tilde g_{1,j}(s) - \bar f_{1,j}$ for $j \in \{1,2\}$.
In particular, from \eqref{eq:sel12} and \eqref{eq:bigorder}, if the system is in mode 1 at time $s-1$ then on $A^1_n$, \eqref{eq:badxi} selects mode 2 at time $s$ while \eqref{eq:goodxi} selects mode 1 at time $s$.

\section{Example: A hydropower planning problem\label{sec:num-ex-hydro-planning}}

In this section we first illustrate the above framework for risk-aware optimal switching under general filtration by formulating a non-Markovian hydropower planning problem (Sections \ref{sec:dsm}--\ref{sec:toptp}). In Sections \ref{sec:dpe}--\ref{sec:numerics} we provide practical dynamic programming equations, an approximate numerical scheme for the problem, a solution algorithm using neural networks and a discussion of numerical results.

\subsection{Decision space and market}\label{sec:dsm}
Consider a hydropower producer whose interventions take the form of bidding into a market. The producer sells electricity in a daily spot market at noon on the day before delivery. Let $T=9$, $\bbT:=\{0,\ldots,T\}$ and $\bbT^+:=\{0,\ldots,T+1\}$. Here, $t \in \bbT \cup \{-1\}$ represents a decision epoch at hour $12$ of day $t$, where day -1 is the last day of the previous planning period. We assume one-hour planning periods so at decision epoch $t\in \bbT$, the producer hands in a list of bids $B_{t}:=(B^E_{t+1,1},\ldots,B^E_{t+1,24};B^P_{t+1,1},\ldots,B^P_{t+1,24})$, where $B^E_{t+1,l}$ specifies the quantity of electrical energy offered and $B^P_{t+1,l}$ the acceptable price for hour $l$ of day $t+1$. Just after decision epoch $t$, the market clears and the prices of electricity are published. If the market price $R_{t+1,l}$ of electricity for hour $l$ exceeds the producer's bid price $B^P_{t+1,l}$, the producer is obligated to deliver the bidden volume $B^E_{t+1,l}$ of electrical energy during hour $l$ of day $t+1$. For this the producer receives a payment $R_{t+1,l}B^E_{t+1,l}$. The total income 
arising from the bid vector $B_t$ made at decision epoch $t$ is thus given by
\begin{align}\label{eq:rev1}
	\sum_{l=1}^{24}\ett_{\{B^P_{t+1,l}\leq R_{t+1,l}\}}R_{t+1,l}B^E_{t+1,l}.
\end{align}
If, on the other hand, a bid is accepted and the reservoir contains insufficient water to deliver the bidden volume, she has to purchase the undelivered energy from the balancing power market at a price $R^F$, which is usually higher than the spot price. This induces the cost
\begin{align}\label{ekv:fin-penal}
	\sum_{l=1}^{24}\ett_{\{B^P_{t+1,l}\leq R_{t+1,l}\}}R^F_{t+1,l}(B^E_{t+1,l}-E_{t+1,l})^+
\end{align}
of undelivered energy, where $E_{t,l}$ is the electrical energy produced during hour $l\in\{1,\ldots,24\}$ of day $t$.

\subsection{Probability space, inflow and price processes}

We take $(\Omega,\mcF,\{\mcF_t\}_{t \in \bbT},\Prob)$ to be a filtered probability space, with $\mcF_t$ representing the information available at noon on day $t \in \bbT$. This space will be rich enough to support a Markovian price process $(\tilde R_t)_{t\in\{-1\} \cup \bbT}$ and a non-Markovian inflow process $(\tilde I_t)_{t\in\bbT^+}$, as follows.

As is common in electricity planning problems, we assume that the electricity price vector $(\tilde R_t)_{t\in\bbT^+}:=(R_{t,1},\ldots,R_{t,24})_{t\in\bbT^+}$ is a bounded Markov process adapted to $\bbF$.
Regarding the inflow process, even under normal conditions, heavy rainfall only leads to increased inflows to a reservoir after a time delay, as the water is filtered through the catchment area surrounding the reservoir. Moreover, the hydropower station may be located in a mountainous area where river flows depend heavily on the melting of snow masses in a spring flood.
To model the discrete-time process of inflows $\{I_{t,j}\}^{1\leq j\leq 24}_{t\in\bbT}$, where $I_{t,l}$ is the inflow of water from the surroundings during hour $l$ of day $t$, let $(H_s)_{s\geq 0}$ be a continuous-time Markov process representing relevant environmental conditions. To account for the highly non-linear dependence of inflows on environmental conditions, set
\begin{align}\label{ekv:inflow}
	I_{t,j}=\int_{t+j/24}^{t+(j+1)/24}\int_{0}^{\delta}h(s)H_{r-s}dsdr,
\end{align}
where $\delta$ is a constant time lag and $h$ a deterministic function. Then $(\tilde I_t)_{t\in\bbT^+}:=(I_{t-1,13},\ldots,I_{t,12})_{t\in\bbT^+}$ is adapted to $\bbF$ and non-Markovian.

\subsection{Dynamics of the hydropower system}
We assume that the hydropower system consists of one reservoir containing the volume $M_{t,l}$ at the beginning of hour $l$ of day $t$ and a plant that produces electricity
\begin{align}\label{eq:elprod}
	E_{t,l} \coloneqq \eta(M_{t,l},F_{t,l}),
\end{align}
where $F_{t,l}$ is the flow of water directed through the turbines
and $\eta:\R^2_ +\to [0,C]$ is a deterministic function describing the efficiency of the plant with $C>0$ the installed capacity. We assume that the function $y \mapsto \eta(m,y)$ is strictly increasing for each fixed $m$ lying between the reservoir minimum level $M_\text{min}$ and maximum $M_\text{max}$.
The process $M = (M_{t,l})_{t,l}$ of reservoir levels follows the dynamics
\begin{align}\nonumber
	M_{t,l}&=\min\{\ett_{[l>1]}(M_{t,l-1}-F_{t,l-1}+I_{t,l-1})
	\\
	&\quad+\ett_{[l=0]}(M_{t-1,24}-F_{t-1,24}+I_{t-1,24}),M_{\text{max}}\},\label{ekv:reservoir-dynamics}
\end{align}
where $M_{0,13}$ is the volume in the reservoir at the first decision epoch.

Also, as explained in \cite{Lundstrom2020}, changing the production level by altering the flow $F_{t,l}$ may necessitate the startup or shutdown of turbines, resulting in both wear and tear and temporarily decreased efficiency. This feature motivates the inclusion of switching costs in the optimisation problem.

\subsection{The optimisation problem}\label{sec:toptp}
The controllable parameters in the problem are the bid vectors $\{B_t\}_{t\in\bbT}$. With the reasonable assumption that these bids take values in a finite set $\mcI\subset \R^{48}$ we have a switching problem. Let $\xi:=(\xi_t)_{t\in\bbT}$ denote the switching control, so that $\xi_t = B_t$ for each $t \in \bbT$.

By inverting $\eta$, the production plan and the reservoir level gives us the flow
\begin{align}\label{eq:flow}
	F_{t+1,l}=\min(f(B^E_{t+1,l},M_{t+1,l})\ett_{\{B^P_{t+1,l}\leq R_{t+1,l}\}}, M_{t+1,l} - M_{\text{min}}).
\end{align}
Substituting \eqref{eq:flow} into \eqref{ekv:reservoir-dynamics} we see that 
$M_{t}$ depends both on $\omega$ and on the entire history of $\xi$ up to time $t$. It follows that the switching costs are also dependent on this history. Therefore, recalling \eqref{eq:rev1}--\eqref{eq:flow} and letting $\mcI_t \coloneqq (\mcI)^{t+1}$, for $(i_{-1},\ldots,i_{t-1},i_{t}) \in  \mcI_{t+1}$
we may define the rewards for the planning problem as
\begin{equation}\label{eq:gij}
	\begin{split}
		& \tilde g_{i_{-1:t-1},i_{-1:t}}(t) \coloneqq {}  -c_{i_{-1:t}}(\tilde R_{t+1})+\sum_{l=1}^{24}\ett_{\{i_{t,24+l}\leq R_{t+1,l}\}}(R_{t+1,l}i_{t,l}
		\\
		& \qquad - R^F_{t+1,l}(i_{t,l}-\eta(M^{i_{-1:t}}_{t+1,l},
		\min(f(i_{t,l},M^{i_{-1:t}}_{t+1,l}),
		M^{i_{-1:t}}_{t+1,l} - M_\text{min})))^+) \\
		& \qquad + \ett_{\{t=T\}}R^M M^{i_{-1:T}}_{T+2,1},
	\end{split}
\end{equation}
where
\begin{itemize}
	\item 
	$i_{-1:t} := (i_{-1},\ldots,i_{t-1},i_{t})$;
	\item $M^{i_{-1:t}}_{t+1,l}$ is the reservoir level at hour $l$ on day $t+1$ corresponding to the bid history $i_{-1:t} \in \mcI_{t+1}$;
	\item $i_{t,m}$ is the $m$-th component of $i_t$;
	\item $R^M$ is the value of water stored at the end of the planning period;
	\item and for each
	$r\in\R^{24}_+$, $c_{i_{-1:t}}(r)$ is the cost rendered by switching from bid $i_{t-1}$ to $i_t$ when the price vector is $r$ and the bid history is $i_{-1:t}$.
\end{itemize}

If the producer has risk mapping $\rho$ then for each $t \in \mathbb{T}$, given a bid history $i_{-1:t-1} \in \mcI_{t}$
the objective is to find
\begin{equation}\label{ekv:value-fun-hydroplanning}
	V_t^{i_{-1:t-1}} \coloneqq \esssup_{\xi \in\mcU^{i_{-1:t-1}}_t}\rho_{t,T}\big(\tilde g_{\xi_{-1:t-1},\xi_{-1:t}}(t),\tilde g_{\xi_{-1:t},\xi_{-1:t+1}}(t+1),\ldots,\tilde g_{\xi_{-1:T-1},\xi_{-1:T}}(T)\big),
\end{equation}
where $\mcU^{i_{-1:t-1}}_t$ is the set of $\bbF$-adapted, $\mcI$-valued processes $(\xi_s)_{s \in\bbT}$ such that $\xi_{-1:t-1} = i_{-1:t-1}$. Note that the reward $\tilde g_{i_{-1:t-1},i_{-1:t}}(t)$ is $\mcF_{t+1}$-measurable but not $\mcF_{t}$-measurable. The producer's problem is thus one of non-adapted (in this case, delayed) information.

\subsection{Dynamic programming equations}\label{sec:dpe}
By modifying the proof of Theorem~\ref{thm:vrfctn} accordingly we can show that the value processes $(V_t^{i_{-1:t-1}} \colon i_{-1:t-1} \in \mcI_t)_{t\in\bbT}$ corresponding to \eqref{ekv:value-fun-hydroplanning} satisfy the following analogue of \eqref{ekv:VFrec}:
\begin{equation}\label{ekv:dynP-hydro}
	\begin{cases}
		V_T^{i_{-1:T-1}} = \max_{j\in \mcI} \rho_T(\tilde g_{i_{-1:T-1},(i_{-1:T-1},j)}(T)),& {}\\
		V_t^{i_{-1:t-1}} = \max_{j\in \mcI} \rho_t(\tilde g_{i_{-1:t-1},(i_{-1:t-1},j)}(t)+ V^{(i_{-1:t-1},j)}_{t+1}),& \text{for} \: 0\leq t<T,
	\end{cases}
\end{equation}
where for $i_{-1:t-1} \in \mcI_t$ and $j \in \mcI$ we define $(i_{-1:t-1},j) = (i_{-1},\ldots,i_{t-1},j)$. {\color{black} In order to obtain a practical solution algorithm we observe that the same optimal control can be obtained by dynamic programming without requiring the entire bid history. Recalling \eqref{eq:gij}, given $\omega \in \Omega$, for $(i_{-1},\ldots,i_{t-1},i_{t}) \in  \mcI_{t+1}$ the cost $\tilde g_{i_{-1:t-1},i_{-1:t}}(t)$ depends on $i_{-1:t-1}$ only through its final bid vector $i_{t-1}$ and the reservoir level $M^{i_{-1:t}}_{t+1,1}$. Moreover, by \eqref{ekv:reservoir-dynamics} and \eqref{eq:flow}, $M^{i_{-1:t}}_{t+1,1}$ only depends on $i_{-1:t-1}$ through $M_{t,13}^{i_{-1:t-1}}$ and the final bid vector $i_{t-1}$.
	Thus for $i_{-1:t}  \in \mcI_{t+1}$ and $m \in \mathbb{R}$ we may define new (random) rewards $\tilde{g}_{i_{t-1},i_{t}}(t, m)$ such that
	\begin{equation}\label{eq:gdef2}
		\begin{split}
			&\tilde g_{i_{t-1},i_{t}}(t,m) \coloneqq {}  -c_{i_{-1:t}}(R_{t+1})+\sum_{l=1}^{24}\ett_{\{i_{t,24+l}\leq R_{t+1,l}\}}(R_{t+1,l}i_{t,l}
			\\
			& \qquad - R^F_{t+1,l}(i_{t,l}-\eta(M^{m,i_{t}}_{t+1,l},\min(f(i_{t,l},M^{m,i_{t}}_{t+1,l}),M^{m,i_{t}}_{t+1,l}-M_\text{min})))^+) \\
			& \qquad + \ett_{\{t=T\}}R^M M^{i_{-1:T}}_{T+2,1},
		\end{split}
	\end{equation}
	where $M^{m,i_{t}}_{t+1}$ is the vector of reservoir levels on day $t+1$ given that on day $t$ the reservoir was at level $m$ at the beginning of hour $13$ (\textit{i.e.} at noon) and the bid vector was $i_{t}$. That is, $\tilde{g}_{i_{t-1},i_{t}}(t, m)$ and $\tilde g_{i_{-1:t-1},i_{-1:t}}(t)$ coincide when $M_{t,13}^{i_{-1:t-1}} = m$. Then define auxiliary value processes by
	\begin{equation}\label{redp2}
		V_t^{i}(m) =
		\begin{cases}
			\max_{j\in \mcI} \rho_T(\tilde g_{i,j}(T, m)),& {}\\
			\max_{j\in \mcI} \rho_t(\tilde g_{i,j}(t, m) + V_{t+1}^{j}(M^{m,j}_{t+1,13})) ,& \text{for} \: 0\leq t<T.
		\end{cases}
	\end{equation}
	
	By construction we have $V_t^{i_{t-1}}(M_{t,13}^{i_{-1:t-1}}) = V_t^{i_{-1:t-1}}$; this can be confirmed by backward induction. Therefore, if the auxiliary value function $V_{t+1}^{j}(m)$ can be computed for each $j \in \mcI$ and $m \in \mathbb{R}$, then \eqref{ekv:dynP-hydro} and \eqref{redp2} provide equivalent dynamic programming equations over the set of modes $\mcI$. The benefit of \eqref{redp2} is that we do not need to remember the switching control's entire history.
	Note that this reformulation is non-Markovian since $(M_t)_{t \in \mathbb{T}}$ is not a Markov process.
	In the next section we present a numerical approximation to this scheme using neural networks.}

\subsection{Numerical scheme}\label{sec:ns}
Let $\eta(M,F)=\eta_0MF$ with $\eta_0=0.1$ and $R_{t,l}=(1+|\sin(l\pi/12)|)(0\vee\tilde R_{t+(l-1)/24}\wedge C_R)$, where the multiplicative coefficient models the daily trend, $C_R=4$ is a price ceiling and $\tilde R$ solves the stochastic difference equation
\begin{align*}
	\tilde R_{t+1}-\tilde R_{t}=0.02(1-\tilde R_t)+0.05N_t,
\end{align*}
where $(N_t)_{t \in \mathbb{T}}$ are standard Normal random variables.

For the processes $I$ and $H$ of \eqref{ekv:inflow} we take $\delta=2/24$, $h(s):=\sin(s\pi/\delta)$ and $H$ to be a pure jump Markov process taking values in $\{0,0.5,1\}$
with transition intensity matrix
\begin{align*}
	Q_H:=\left[\begin{array}{rrr} -1 & 0.5 & 0.5\\
		1 & -2 & 1\\
		2 & 0.5 & -2.5\end{array}\right],
\end{align*}
representing no, medium and heavy rainfall respectively. For numerical purposes we approximate $H$ by a discrete-time Markov chain updating $k$ times per hour, with transition matrix $\exp\left(\frac{1}{24k}Q_H\right)$.

Moreover let $\tilde I$ be a discretisation of the set $[0,2]^{24}\times[0,4]^{24}$ (representing the fact that market bids  have limited precision, for example 1 MWh and 0.01 Euro) and let the hydropower producer's risk aversion be modelled by an entropic risk measure, \ie
\begin{align*}
	\rho_t(X)=-\frac{1}{\theta}\log\left(\EE\big[e^{-\theta X}\big|\mcF_t\big]\right),
\end{align*}
with parameter $\theta>0$. Finally, we assume that changes in production level cost $0.1$ Euro per MW and set $R^F=10$, $R^M:=4$, $M_\text{min} = 10$, $M_\text{max} = 50$ and $k=2$.

\subsubsection{State-space description}
To obtain a state-space description of our problem we introduce the state $(x_t)_{t\in\bbT}$, where $x_t$ is the non-redundant information available at hand at noon on day $t$, that is:
\begin{align}\label{eq:ssp}
	x_t:=\left[\begin{array}{c} M_{t,13}\\
		R_{t,24}\\
		\{P_{t,j}\}_{13\leq j\leq 24}\\
		\{H^k_{t+1/2-l/24k}\}_{l\in\{0,\ldots,24\delta k\}}\end{array}\right],
\end{align}
\noindent where $\{P_{t,j}\}_{13\leq j\leq 24}$ is the $\mcF_t$-measurable production schedule for the hours between noon and midnight of day $t$. In particular the state contains the discretised weather trajectory for the past two hours (10 am to noon) since, according to \eqref{ekv:inflow}, the impact of precipitation
is only fully revealed after this delay. Recalling the notation $\xi^*$ of Theorem~\ref{thm:vrfctn} for an optimal strategy, from  Section \ref{sec:dpe} the optimal mode (bid vector) $\xi_t^*$ depends its previous value $\xi_{t-1}^*$ only through the production schedule $ \{P_{t,j}\}_{13\leq j\leq 24}$, so we may write $\xi_t^* = \xi_t^*(x_t)$.

It follows from equations 
\eqref{ekv:reservoir-dynamics}--\eqref{eq:gij}  and \eqref{ekv:inflow} that given the system state $x_t$ and bid vector $j=B_t$ at noon on day $t$, both the reward $\tilde{g}_{i,j}(t)$ and the new state $x_{t+1}$ are measurable with respect to the noise vector $w_t$, where
\begin{align*}
	w_t:=\left[\begin{array}{c} \{R_{t+1,j}\}_{1 \leq j \leq 24}\\
		\{H^k_{t+1/2+l/24k}\}_{l\in\{1,\ldots,36k\}}\end{array}\right],
\end{align*}
which is not $\mcF_t$-measurable.

\subsection{Algorithm}\label{sec:alg}

In this section we describe an implementation of the numerical scheme of Section \ref{sec:ns}. Code implementing this scheme, and also a risk-neutral scheme, is available at \url{https://github.com/moriartyjm/optimalswitching/tree/main/hydro}
and is described in Algorithm \ref{alg}. For practicality it employs the neural networks shown in Figures \ref{fig:bidnn} and \ref{fig:valuenn}.

The bid neural network, whose architecture is given in Figure \ref{fig:bidnn}, aims to solve the following optimisation problem:
\begin{align}\label{ekv:VFrec2}
	\begin{cases}
		\xi^*_T(x_{T}) \in \argmax_{j\in\mcI} \left\{-\frac{1}{\theta}\log\left(E^{x_{T}}_{T}[e^{-\theta \tilde g_{i,j}(T)}] \right)\right\},& {}\\
		\xi^*_t(x_t) \in \argmax_{j\in\mcI} \left\{-\frac{1}{\theta}\log\left(E_{t}^{x_{t}}\big[e^{-\theta (\tilde g_{i,j}(t) + \hat{V}^{j}_{t+1}(x_{t+1}))}\big] \right)\right\},& \text{for} \: 0\leq t<T,
	\end{cases}
\end{align}
where $x_{t} \mapsto E_{t}^{x_{t}}$ approximates the conditional expectation with respect to $\mcF_t$ using the state vector, and  $\hat{V}^{j}_{t+1}(x_{t+1})$
approximates the continuation value using the value neural network, whose architecture is given in Figure \ref{fig:valuenn}. Continuation values $\hat{V}^{j}_{T+1}(x_{T+1})$ are set equal to zero. Note that these equations do not simplify further since the rewards $\tilde g_{ij}(t)$ are non-adapted.

The optimisation is performed by first training the bid neural network on $M$ independent noise realisations with target values equal to zero and loss function equal to $$-\frac{1}{\theta}\log\left(\frac{1}{M}\sum_{\ell=1}^{M}\Big[e^{-\theta (\tilde g_{i,\xi^*_t(x^{\ell}_{t})}(t) + \hat{V}^{j}_{t+1}(x^{\ell}_{t+1}))}\Big]\right),$$
where $x^{\ell}_{t}$ denotes the state vector $x_t$ under the $\ell$th noise realisation. (Note that since the state $x^{\ell}_{t}$ contains the production schedule $\{P^\ell_{t,j}\}_{13\leq j\leq 24}$, it also depends on the bid vector submitted at time $t-1$; we omit this dependency in order to lighten the notation). After the bid neural network has been trained, the value neural network
is trained on the $M$ independent noise realisations with target values equal to $\exp\big(-\theta (\tilde g_{i,\xi^*_t(x_t^\ell)}(t) + \hat{V}^{j}_{t+1}(x_{t+1}^\ell))\big)$ and the mean squared error as the loss function. Initial reservoir levels $M_{0,13}$ are drawn uniformly at random between $M_{\text{min}}$ and $M_{\text{max}}$, while initial market prices $R_{0,24}$ and weather values $\{H^k_{t+0.5-l/k}\}_{l\in\{0,\ldots,\delta k\}}=0$ are drawn from the corresponding stationary distribution.

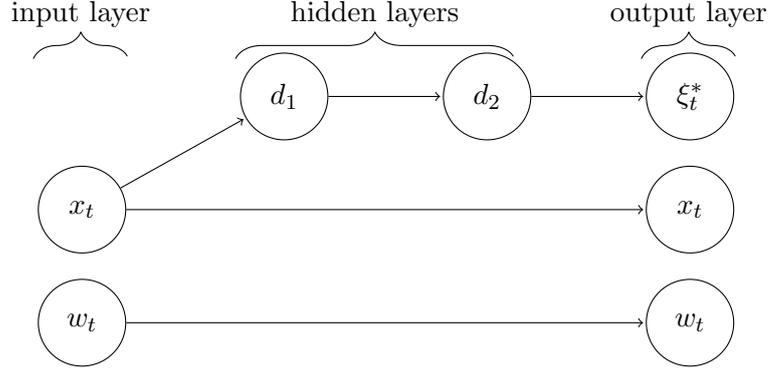
\begin{figure}
	\centering
	\begin{tikzpicture}[shorten >=1pt]
		\tikzstyle{unit}=[draw,shape=circle,minimum size=1.15cm]
		
		\node[unit](x1) at (0,3.5){$x_t$};
		\node[unit](x2) at (0,2){$w_t$};
		
		\node[unit](y1) at (8,5){$\xi^*_t$};
		\node[unit](y3) at (8,3.5){$x_t$};
		\node[unit](y4) at (8,2){$w_t$};
		
		\node[unit](w1) at (2.66,5){$d_1$};
		\node[unit](w2) at (5.33,5){$d_2$};
		
		\draw[->] (x1) -- (w1);
		\draw[->] (w1) -- (w2);
		
		\draw[->] (w2) -- (y1);
		
		\draw[->] (x1) -- (y3);
		\draw[->] (x2) -- (y4);

		\draw [decorate,decoration={brace,amplitude=10pt},xshift=-4pt,yshift=0pt] (-0.5,5.5) -- (0.75,5.5) node [black,midway,yshift=+0.6cm]{input layer};
		\draw [decorate,decoration={brace,amplitude=10pt},xshift=-4pt,yshift=0pt] (2.16,5.5) -- (5.83,5.5) node [black,midway,yshift=+0.6cm]{hidden layers};
		\draw [decorate,decoration={brace,amplitude=10pt},xshift=-4pt,yshift=0pt] (7.5,5.5) -- (8.75,5.5) node [black,midway,yshift=+0.6cm]{output layer};
	\end{tikzpicture}
	\caption{Architecture of the bid neural network. Nodes $d_1$ and $d_2$ represent dense layers with sigmoid activation function.}
	\label{fig:bidnn}
\end{figure}

\begin{figure}
	\centering
	\begin{tikzpicture}[shorten >=1pt]
		\tikzstyle{unit}=[draw,shape=circle,minimum size=1.15cm]
		
		\node[unit](x1) at (0,5){$x_t$};
		\node[unit](x2) at (0,3.5){$w_t$};
		
		\node[unit](y1) at (8,5){$\hat{V}_t$};
		
		\node[unit](w1) at (2.66,5){$d_3$};
		\node[unit](w2) at (5.33,5){$d_4$};
		
		\draw[->] (x1) -- (w1);
		\draw[->] (x2) -- (w1);
		\draw[->] (w1) -- (w2);
		
		\draw[->] (w2) -- (y1);
		
		\draw [decorate,decoration={brace,amplitude=10pt},xshift=-4pt,yshift=0pt] (-0.5,5.5) -- (0.75,5.5) node [black,midway,yshift=+0.6cm]{input layer};
		\draw [decorate,decoration={brace,amplitude=10pt},xshift=-4pt,yshift=0pt] (2.16,5.5) -- (5.83,5.5) node [black,midway,yshift=+0.6cm]{hidden layers};
		\draw [decorate,decoration={brace,amplitude=10pt},xshift=-4pt,yshift=0pt] (7.5,5.5) -- (8.75,5.5) node [black,midway,yshift=+0.6cm]{output layer};
	\end{tikzpicture}
	\caption{Architecture of the value neural network. To reduce dimension, in the state vector $x_t$ the production schedule $\{P_{t,j}\}_{13\leq j\leq 24}$ is replaced by the sum of its entries. Nodes $d_3$ and $d_4$ represent dense layers with sigmoid activation function.}
	\label{fig:valuenn}
\end{figure}
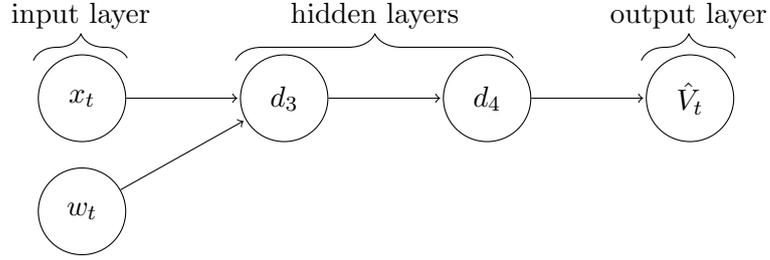

\begin{algorithm}[!h]\label{alg}
	\SetKwInOut{Input}{Input}
	\SetKwInOut{Output}{Output}
	\SetKw{In}{in}
	\SetKw{State}{State:}
	\SetKw{Noise}{Noise:}
	\SetKw{Train}{Train}
	\SetKw{Store}{Store}
	\SetKw{Predict}{Predict}
	\Input{$M$ independently sampled price and weather trajectories and initial reservoir levels}
	\For{$t$ \In $\{T,T-1,\ldots,0\}$}
	{
		Train bid NN for day $t$ with states $(x_t^i)_{i=1,\ldots,M}$ and noise $(w_t^i)_{i=1,\ldots,M}$, store as {\tt model\_bids[$t$]}, and predict optimal bid vectors $((\xi^*_t)^i)_{i=1,\ldots,M}$ \\
		Train value NN for day $t$ with states $(x_t^i)_{i=1,\ldots,M}$, bid vectors $((\xi^*_t)^i)_{i=1,\ldots,M}$ and noise $(w_t^i)_{i=1,\ldots,M}$ and
		store as {\tt mdl\_E\_exp[$t$]}
	}
	\Output{Neural networks {\tt model\_bids}, {\tt mdl\_E\_exp} approximating optimal bid vector and value for each day $t$, state $x_t$
	}
	\caption{Hydropower planning over $T$ days}
\end{algorithm}

\subsection{Numerical results and discussion}\label{sec:numerics}
In this section we present and discuss numerical results obtained using Algorithm~\ref{alg} over an optimisation horizon of 10 days and with 50,000 independent noise realisations. Identifying the risk-neutral case with $\theta = 0$, results are plotted for $\theta=0, 0.01$ and $0.02$ in blue (solid), orange (dashed) and green (dotted) respectively.

For each hour in the optimisation, Figure~\ref{fig:meanGEN} shows the production level under the respective optimal strategies, averaged across all noise realisations. Similarly, Figure~\ref{fig:M_t-samples} plots the mean water level under the optimal strategies, together with the 0.05 percentiles (dashed lines).
\begin{figure}[h!]
	\centering
	\includegraphics[width=0.75\textwidth]{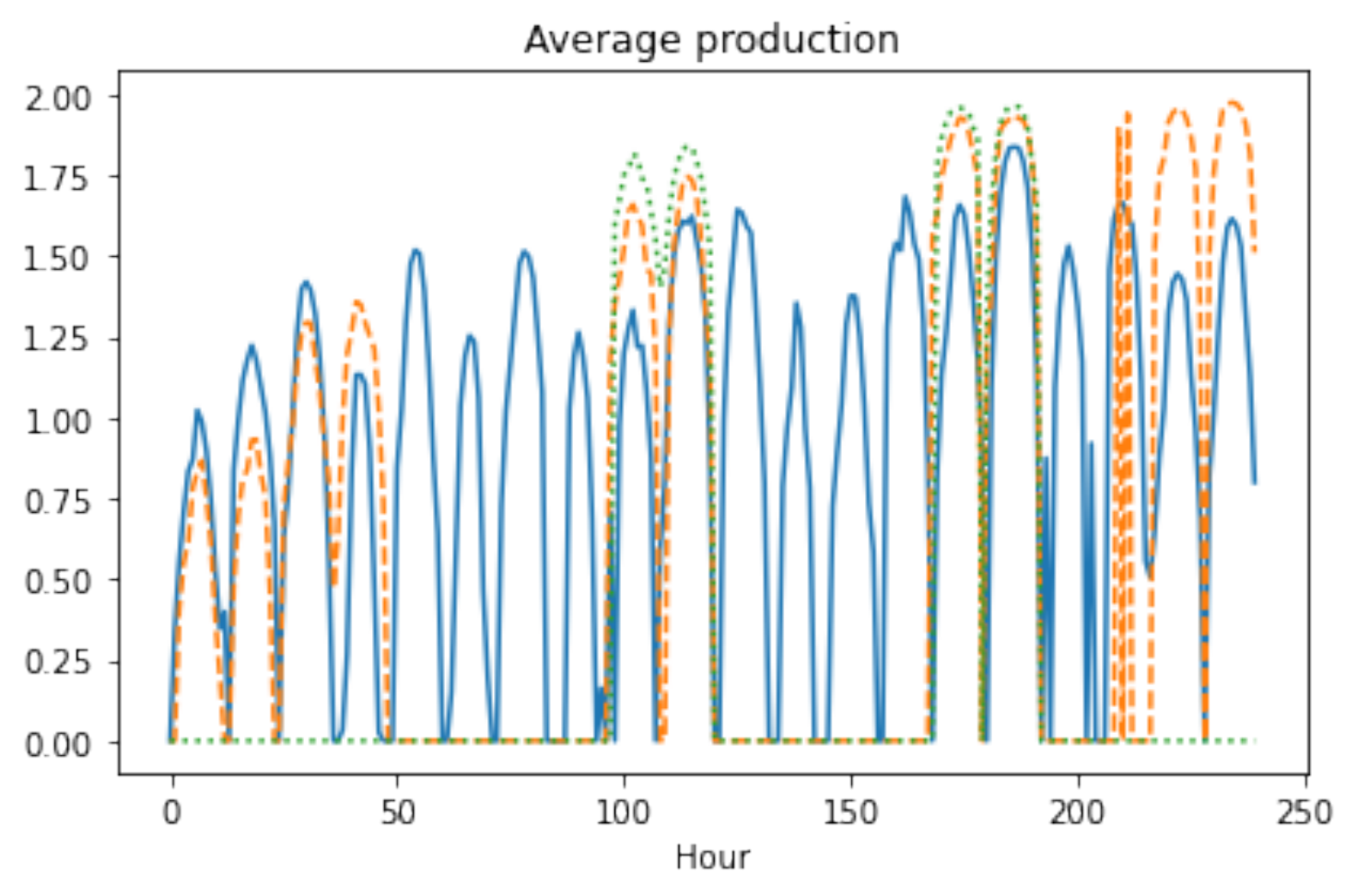}
	\caption{Average production curves by hour for risk sensitivity $\theta=0, 0.01$ and $0.02$ (blue solid, orange dashed and green dotted respectively).
	}
	\label{fig:meanGEN}
\end{figure}
In order to represent the value processes, Figure~\ref{fig:VFs} plots the prediction $\hat{V}_0(x_0)$ made by the value neural network when the input is $x_0=\left[m, 0, \mathbf{0}, \mathbf{0}\right]^{\transp}$, for different values of $m$ (recall \eqref{eq:ssp}, ${}^\transp$ denotes transpose).

The reservoir's physical constraints $M_{\text{min}}$ and $M_{\text{max}}$ create risks for the hydropower producer. When the reservoir level is near $M_{\text{min}}$ the producer risks being unable to fulfil the bid volume and receiving a penalty for under-production. Conversely, if the reservoir reaches its maximum level $M_{\text{max}}$ then she risks spilling the water inflow, which would otherwise be stored and used profitably later.

From Figure~\ref{fig:M_t-samples}, the risk-neutral producer maintains the reservoir at an intermediate water level on average. Further, in at least 5\% of cases she allows the water level to fall rather close to the minimum level. In contrast, in at least 95\% of cases the optimal strategy of the risk-averse producers first drives the initial water level up by trading less, and production increases only once the reservoir is at least approximately half filled. Indeed, for $\theta=0.02$ the average water level is seen to increase towards $M_{\text{max}}$ over the time horizon. Thus increases in $\theta$ incentivise the producer to avoid the risk of under-production penalties. (The risk of spilling water at level  $M_{\text{max}}$ appears to have comparatively less influence on the optimal strategies.)	

These observations are also borne out in Figure~\ref{fig:VFs}. In the risk-neutral case, the marginal value of water is approximately constant as the water level varies. However locally around $M_{\text{min}}$, where the risk of penalties has more influence, the marginal value of water becomes lower as the risk sensitivity parameter $\theta$ increases.

Figure~\ref{fig:meanGEN} confirms that the risk-neutral strategy involves producing every day, and also involves following the daily price trend within each day. As the risk aversion parameter $\theta$ increases, the number of production days, and also the total produced volume, decrease.

\begin{figure}[h!]
	\centering
	\includegraphics[width=0.75\textwidth]{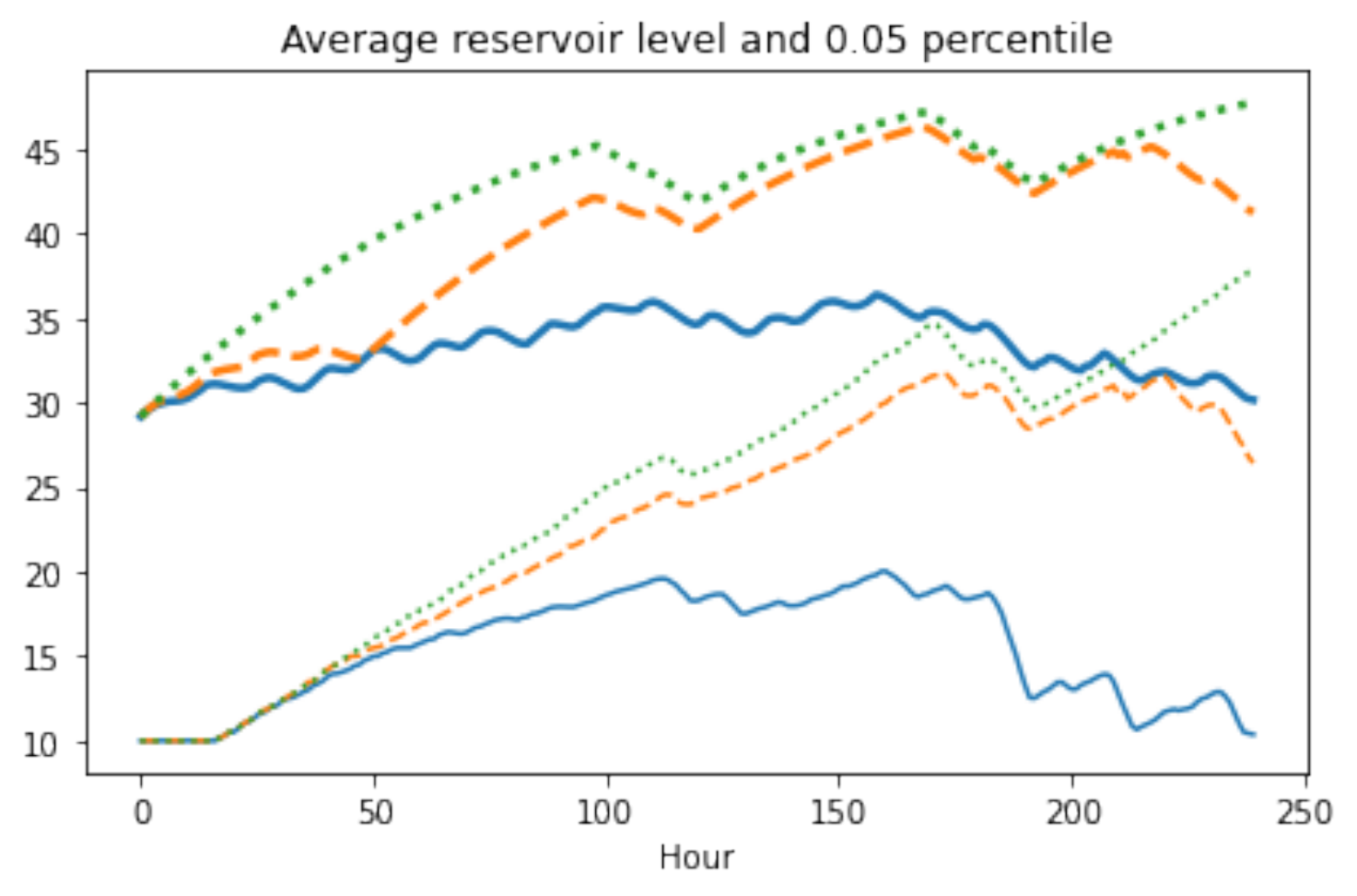}
	\caption{Plots for risk sensitivity $\theta=0, 0.01$ and $0.02$ of the reservoir level $M_t$ by hour: mean (thick blue solid, orange dashed and green dotted respectively) and 0.05 percentile (thinner).
	}
	\label{fig:M_t-samples}
\end{figure}

\begin{figure}[h!]
	\centering
	\includegraphics[width=0.75\textwidth]{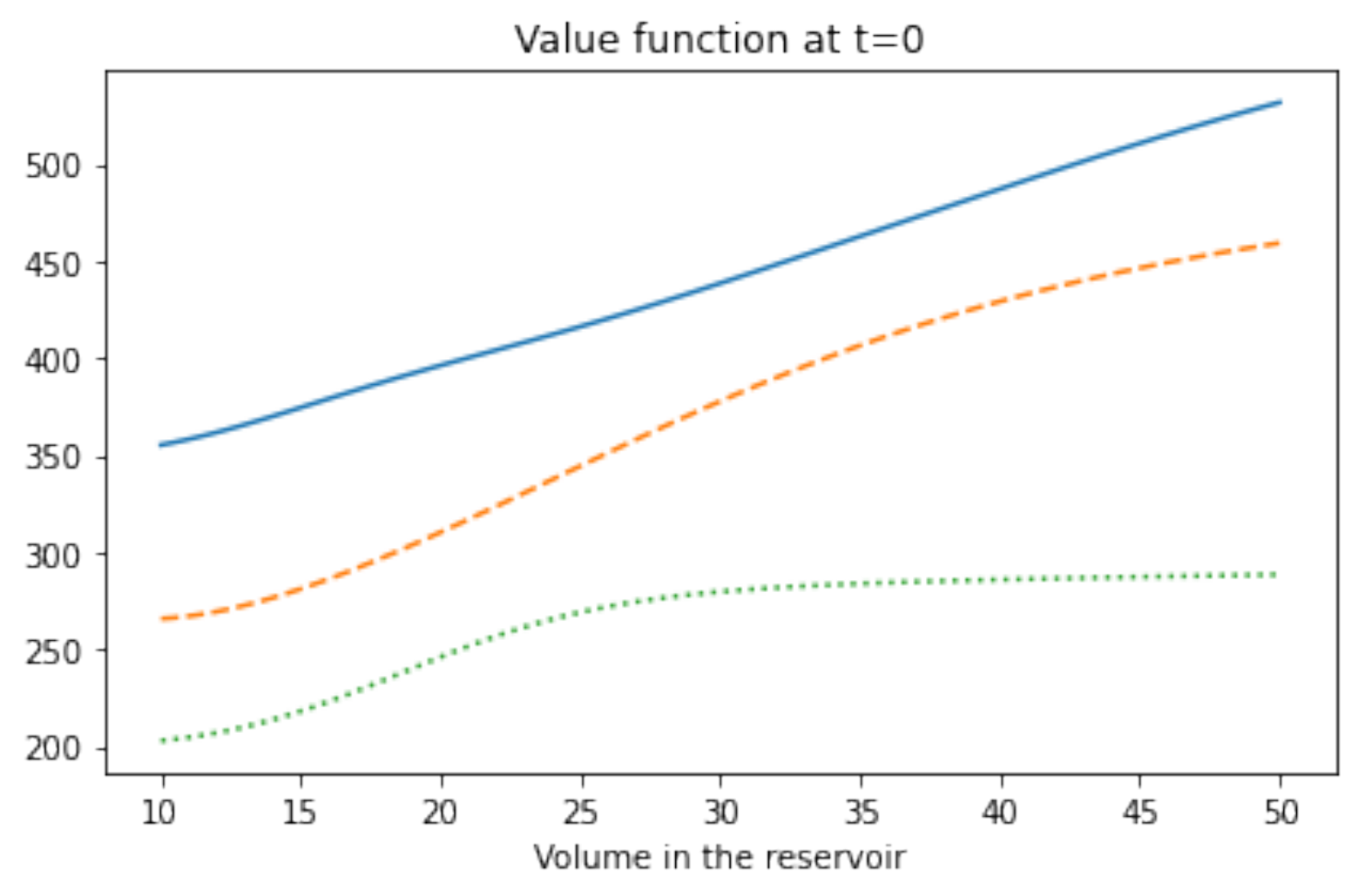}
	\caption{Predictions made by the $t=0$ value neural network for $\theta=0, 0.01$ and $0.02$ (blue solid, orange dashed and green dotted respectively) for the input $x_0=\left[m, 0, \mathbf{0}, \mathbf{0}\right]^{\transp}$, for different initial reservoir levels $m$.}
	\label{fig:VFs}
\end{figure}

\section*{Acknowledgments}
\noindent The authors would like to thank everyone whose suggestions helped improve the presentation of the paper. This work was partially supported by EPSRC grant numbers EP/N013492/1 and EP/P002625/1, by the Lloyd's Register Foundation-Alan Turing Institute programme on Data-Centric Engineering under the LRF grant G0095, and by the Swedish Energy Agency through grants number 42982-1 and 48405-1.

\section*{Data}
\noindent The code used in Section \ref{sec:num-ex-hydro-planning} can be found at \url{https://github.com/moriartyjm/optimalswitching/tree/main/hydro}.

\appendix

\section{Properties of conditional risk mappings}

Here we review definitions and preliminary results on conditional risk mappings that are used in the main text. References for this material include \cite{Frittelli2002,Detlefsen2005, PflugPichler2014,SDR2014,Ruszczynski2006,CHERIDITO2011,Ruszczynski2010,Follmer2016} among many others. Proofs are provided for results if they are not readily available in these references.

We are given a probability space $(\Omega,\mathcal{F},\mathbb{P})$ and a filtration $\mathbb{G} = \{\mathcal{G}_{t}\}_{t \in \mathbb{T}}$ of sub-$\sigma$-algebras of $\mathcal{F}$. All random variables below are defined with respect to this probability space, and (in-) equalities between random variables are in the $\PP$-almost-sure sense.

\subsection{Conditional risk mappings}\label{sec:conditional-risk-mapping}

A $\mathbb{G}$-conditional risk mapping is a family of mappings $\{\rho_{t}\}_{t \in \mathbb{T}}$, $\rho_{t} \colon L^{\infty}_{\mathcal{F}} \to L^{\infty}_{\mathcal{G}_{t}}$, satisfying for all $t \in \mathbb{T}$:
\begin{description}
	\item[\it Normalisation:] $\rho_{t}(0) = 0$,
	\item[\it Conditional translation invariance:] $\forall\; W \in L^{\infty}_{\mathcal{F}}$ and $Z \in L^{\infty}_{\mathcal{G}_{t}}$,
	\[
	\rho_{t}(Z + W) = Z + \rho_{t}(W),
	\]
	\item[\it Monotonicity:] $\forall\; W,Z \in L^{\infty}_{\mathcal{F}}$,
	\[
	W \le Z
	\implies \rho_{t}(W) \le \rho_{t}(Z).
	\]
\end{description}

For each $t \in \mathbb{T}$ we refer to $\rho_{t}$ as a conditional risk mapping. Note that in contrast to the one-step conditional risk mappings $\rho_t$ of \cite{Ruszczynski2010}, whose respective domains would be $L^{\infty}_{\mathcal{G}_{t+1}}$ in this context, here the domain of each $\rho_t$ is $L^{\infty}_{\mathcal{F}}$. Conditional risk mappings and the {\it monetary conditional risk mappings} of \cite{Follmer2016} are interchangeable via the mapping $Z \mapsto \rho_{t}(-Z)$. Each $\mathbb{G}$-conditional risk mapping satisfies the following property (cf. \cite[Proposition 3.3]{Cheridito2006}, \cite[Exercise 11.1.2]{Follmer2016}):
\begin{description}
	\item[\it Conditional locality:] for every $W$ and $Z$ in $L^{\infty}_{\mathcal{F}}$, $t \in \mathbb{T}$ and $A \in \mathcal{G}_{t}$,
	\[
	\rho_{t}(\mathbf{1}_{A}W + \mathbf{1}_{A^{c}}Z) = \mathbf{1}_{A}\rho_{t}(W) + \mathbf{1}_{A^{c}}\rho_{t}(Z).
	\]
\end{description}
A $\mathbb{G}$-conditional risk mapping is said to be strongly sensitive if it satisfies:
\begin{description}
	\item[\it Strong Sensitivity:] $\forall\; W,Z \in L^{\infty}_{\mathcal{F}}$ and $t \in \mathbb{T}$,
	\[
	W \le Z \;\;\text{and}\;\; \rho_{t}(W) = \rho_{t}(Z)
	\iff W = Z.
	\]
\end{description}
The strong sensitivity and monotonicity properties are sometimes jointly called the strict (or strong) monotonicity property.

\subsection{Aggregated conditional risk mappings}

\subsubsection{Finite horizon}
Where it simplifies notation we will write $W_{s:t} = (W_{s},\ldots,W_{t})$ for tuples of length $t-s+1$, with $W_{s:s} = W_{s}$, and use the component-wise partial order $W_{s:t} \le W'_{s:t} \iff W_{r} \le W'_{r},\; r = s,\ldots,t$. If $\alpha$ and $\beta$ are real-valued random variables then we write $\alpha W_{s:t} + \beta Z_{s:t} = (\alpha W_{s} + \beta Z_{s},\ldots,\alpha W_{t} + \beta Z_{t})$.

\begin{lem}\label{Lemma:Basic-Properties-Aggregator}\mbox{}
	The aggregated risk mapping $\{\rho_{s,t}\}$ has the following properties: for all $s,t \in \mathbb{T}$ with $s \le t$,
	\begin{description}
		\item[\it Normalisation:] $\rho_{s,t}(0,\ldots,0) = 0$.
		\item[\it Conditional translation invariance:] $\forall\; \{W_{r}\}_{r  = s}^{t} \in \otimes^{t-s+1} L^{\infty}_{\mathcal{F}}$ with $W_{s} \in \mathcal{G}_{s}$,
		\[
		\rho_{s,t}(W_{s},\ldots,W_{t}) = W_{s} + \rho_{s,t}(0,W_{s+1},\ldots,W_{t}).
		\]
		\item[\it Monotonicity:] $\forall\; \{W_{r}\}_{r  = s}^{t}, \{Z_{r}\}_{r  = s}^{t} \in \otimes^{t-s+1} L^{\infty}_{\mathcal{F}}$,
		\[
		W_{s:t} \le Z_{s:t}
		\implies \rho_{s,t}(W_{s:t}) \le \rho_{s,t}(Z_{s:t}).
		\]
		\item[\it Conditional locality:] $\forall$ $\{W_{r}\}_{r  = s}^{t}$ and $\{Z_{r}\}_{r  = s}^{t}$ in $\otimes^{t-s+1} L^{\infty}_{\mathcal{F}}$,
		\[
		\rho_{s,t}(\mathbf{1}_{A}W_{s:t}+\mathbf{1}_{A^{c}}Z_{s:t}) = \mathbf{1}_{A}\rho_{s,t}(W_{s:t}) + \mathbf{1}_{A^{c}}\rho_{s,t}(Z_{s:t}),
		\;\; \forall\,A \in \mathcal{G}_{s}.
		\]
		\item[\it Recursivity:] for each $s,r,t \in \mathbb{T}$ with $0 \le s < r \le t$,
		\[
		\rho_{s,t}(W_{s:t}) = \rho_{s,r}(W_{s:r-1},\rho_{r,t}(W_{r:t})).
		\]
	\end{description}
\end{lem}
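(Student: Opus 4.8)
The plan is to derive every listed property of $\rho_{s,t}$ from the corresponding property of the one-step mappings $\rho_t$, proceeding by induction on the length $t-s$ of the tuple and unwinding the nested definition $\rho_{s,t}(W_{s:t}) = \rho_s(W_s + \rho_{s+1,t}(W_{s+1:t}))$. In each instance the base case $s=t$ is exactly the stated property of the single mapping $\rho_t$, so all the content lies in the inductive step.

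First I would dispatch normalisation and monotonicity, which are the most routine. For normalisation the inductive step reads $\rho_{s,t}(0,\dots,0) = \rho_s(0 + \rho_{s+1,t}(0,\dots,0)) = \rho_s(0) = 0$, using the induction hypothesis and normalisation of $\rho_s$. For monotonicity, if $W_{s:t} \le Z_{s:t}$ then the induction hypothesis gives $\rho_{s+1,t}(W_{s+1:t}) \le \rho_{s+1,t}(Z_{s+1:t})$, whence $W_s + \rho_{s+1,t}(W_{s+1:t}) \le Z_s + \rho_{s+1,t}(Z_{s+1:t})$, and one application of monotonicity of $\rho_s$ closes the step. Conditional translation invariance needs no induction: when $W_s \in \mcG_s$, conditional translation invariance of $\rho_s$ applied once to $\rho_s(W_s + \rho_{s+1,t}(W_{s+1:t}))$ immediately yields $W_s + \rho_{s,t}(0,W_{s+1},\dots,W_t)$.

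For conditional locality the only subtlety is bookkeeping of the conditioning $\sigma$-algebra. Here I would use that $\bbG$ is a filtration, so a set $A \in \mcG_s$ also lies in every $\mcG_r$ with $r \ge s$; this lets me apply the induction hypothesis to the inner mapping $\rho_{s+1,t}$ with the same $A$. Pulling the indicators through that inner mapping, grouping the $\mathbf{1}_A$- and $\mathbf{1}_{A^c}$-parts into $\mathbf{1}_A(W_s + \rho_{s+1,t}(W_{s+1:t})) + \mathbf{1}_{A^c}(Z_s + \rho_{s+1,t}(Z_{s+1:t}))$, and then invoking conditional locality of $\rho_s$ (legitimate since $A \in \mcG_s$) separates the two branches as required.

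The genuine work is the recursivity identity, and I would prove it by induction on the gap $r-s$. The key preliminary observation is that $\rho_r$ acts as the identity on $\mcG_r$-measurable variables, $\rho_r(Z) = Z + \rho_r(0) = Z$ for $Z \in L^\infty_{\mcG_r}$ (from conditional translation invariance and normalisation), together with the fact that the output $\rho_{r,t}(W_{r:t})$ is itself $\mcG_r$-measurable. The base case $r=s+1$ then reduces to $\rho_{s,s+1}(W_s,\rho_{s+1,t}(W_{s+1:t})) = \rho_s(W_s + \rho_{s+1}(\rho_{s+1,t}(W_{s+1:t}))) = \rho_s(W_s + \rho_{s+1,t}(W_{s+1:t})) = \rho_{s,t}(W_{s:t})$ using that idempotence. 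For the inductive step I would apply the hypothesis to split the inner mapping $\rho_{s+1,t}$ at $r$, and then recognise the resulting expression $\rho_s(W_s + \rho_{s+1,r}(W_{s+1:r-1},\rho_{r,t}(W_{r:t})))$ as precisely $\rho_{s,r}(W_{s:r-1},\rho_{r,t}(W_{r:t}))$ by the definition of the aggregated mapping applied to the length-$(r-s+1)$ tuple. I expect the main obstacle to be notational rather than conceptual: correctly tracking which measurability holds at which nesting level, and confirming that the definition folds back exactly into $\rho_{s,r}$ at the final step.
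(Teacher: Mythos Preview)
Your proposal is correct and is precisely the approach the paper takes: its proof is the single sentence ``The proof follows by expanding the recursive definition of $\rho_{s,t}$ and using the properties of its generator,'' and your inductive unwinding of the nested definition is exactly that expansion carried out in detail. Every step you outline checks out, including the use of the filtration property $\mcG_s \subset \mcG_{s+1}$ for conditional locality and the idempotence $\rho_r(Z)=Z$ for $Z \in L^\infty_{\mcG_r}$ in the recursivity argument.
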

\begin{proof}
	The proof follows by expanding the recursive definition of $\rho_{s,t}$ and using the properties of its generator.
\end{proof}

\subsubsection{Infinite horizon}
\begin{lem}\label{lem:vrho-limit}
	Recalling Definition \ref{def:Useful-Sequences-Random-Variables}, for all $W\in H_\mcF$ we have
	\[
	\varrho_{s}(W_{s},W_{s+1},\ldots) = \lim_{t \to \infty}\rho_{s,t}(W_{s},\ldots,W_{t}) \quad \forall s \in \mathbb{T}.
	\]
\end{lem}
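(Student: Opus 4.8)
The plan is to prove that the sequence $\{\rho_{s,t}(W_s,\ldots,W_t)\}_{t \ge s}$ is Cauchy in $L^{\infty}_{\mcG_s}$, uniformly in $\omega$, so that its $\limsup$ (which is how $\varrho_s$ is defined) is in fact a genuine limit lying in $L^{\infty}_{\mcG_s}$. Fix $s \in \mathbb{T}$ and abbreviate $\Phi_t := \rho_{s,t}(W_s,\ldots,W_t)$. Since $W \in H_\mcF$, fix a sequence $\{k_t\}_{t\in\mathbb{T}} \in \mathcal{K}^{+}_{d}$ with $|W_t| \le k_t$ for all $t$; the summability $\sum_t k_t < \infty$ is what will drive the convergence.

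The key step is a one-step stability estimate: I would show $|\Phi_{t+1} - \Phi_t| \le k_{t+1}$. First, a one-step bound: because $-k_t \le W_t \le k_t$ with $k_t$ deterministic, monotonicity together with normalisation and conditional translation invariance (which give $\rho_t(\pm k_t) = \pm k_t$) yields $|\rho_t(W_t)| \le k_t$, and likewise $|\rho_{t+1}(W_{t+1})| \le k_{t+1}$. Expanding the nested definition, $\Phi_t$ and $\Phi_{t+1}$ coincide in every layer except the innermost, where $\Phi_t$ carries $\rho_t(W_t)$ while $\Phi_{t+1}$ carries $\rho_t\big(W_t + \rho_{t+1}(W_{t+1})\big)$; by the one-step bound these two $\mcG_t$-measurable quantities differ by at most the constant $k_{t+1}$. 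I then propagate this discrepancy outward through the layers $\rho_{t-1},\ldots,\rho_s$: at each layer, replacing an argument $Y$ by $Y'$ with $|Y - Y'| \le \epsilon$ (a constant) changes the output by at most $\epsilon$, since $Y \le Y' + \epsilon$ gives, via monotonicity and then conditional translation invariance, $\rho_r(\cdots + Y) \le \epsilon + \rho_r(\cdots + Y')$, and symmetrically. Telescoping the one-step estimate then gives $|\Phi_{t'} - \Phi_t| \le \sum_{r=t+1}^{t'} k_r \le \sum_{r>t} k_r$ for all $t' > t$.

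To conclude, I would observe that convergence of $\sum_t k_t$ forces the tail $\sum_{r>t} k_r \to 0$, so $\{\Phi_t\}$ is Cauchy in $L^{\infty}$. As the bound is deterministic and uniform in $\omega$, the sequence $\{\Phi_t(\omega)\}$ is moreover a Cauchy sequence of reals for $\PP$-a.e.\ $\omega$, hence converges; by completeness of $L^{\infty}$ the limit $\Phi_\infty$ exists both in $L^{\infty}$-norm and $\PP$-almost surely. Each $\Phi_t \in L^{\infty}_{\mcG_s}$, so $\Phi_\infty$ is $\mcG_s$-measurable and essentially bounded, i.e.\ $\Phi_\infty \in L^{\infty}_{\mcG_s}$. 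Finally, because the limit exists, $\limsup_{t\to\infty}\Phi_t = \lim_{t\to\infty}\Phi_t = \Phi_\infty$, which is precisely the asserted identity $\varrho_{s}(W_{s},W_{s+1},\ldots) = \lim_{t \to \infty}\rho_{s,t}(W_{s},\ldots,W_{t})$.

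I expect the main obstacle to be the propagation step: making rigorous that a constant perturbation of an inner argument passes through all the outer conditional risk mappings without amplification. The two structural facts that make it work are conditional translation invariance, used to extract the deterministic constant cleanly at each layer, and monotonicity, used to convert $|Y - Y'| \le \epsilon$ into two-sided control; both are supplied by Lemma~\ref{Lemma:Basic-Properties-Aggregator}.
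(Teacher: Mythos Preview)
Your proof is correct and follows essentially the same approach as the paper's: both arguments establish that $\{\rho_{s,t}(W_s,\ldots,W_t)\}_{t\ge s}$ is Cauchy with deterministic modulus $\sum_{r>t}k_r$, using monotonicity to sandwich and conditional translation invariance to extract the constants. The only cosmetic difference is that the paper compares $\Phi_{t+n}$ with $\Phi_t$ in one step---invoking recursivity to isolate the block $\rho_{t+1,t+n}(W_{t+1},\ldots,W_{t+n})$ and bounding it by $\sum_{m=1}^{n}k_{t+m}$---whereas you prove the one-step estimate $|\Phi_{t+1}-\Phi_t|\le k_{t+1}$ and telescope; the underlying mechanism is identical.
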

\begin{proof}
	Let $W\in H_\mcF$ and $\{k_{t}\}_{t \in \mathbb{T}}$ be as in the definition of $H_\mathcal{F}$. Set $K_{t} \coloneqq \sum_{n \ge 0}k_{t+n}$. Note that $\{K_{t}\}_{t \in \mathbb{T}}$ is a non-negative, non-increasing deterministic sequence such that $\lim_{t \to \infty}K_{t} = 0$. For every $0 \le s \le t$ and $n \ge 1$,
	\begin{align*}
		\rho_{s,t+n}(W_{s},\ldots,W_{t+n}) & =
		\rho_{s,t+1}\big(W_{s},\ldots,W_{t},\rho_{t+1,t+n}(W_{t+1},\ldots,W_{t+n})\big) \\
		& \le \rho_{s,t+1}\left(W_{s},\ldots,W_{t},\sum_{m=1}^{n}k_{t+m}\right) \\
		& = \rho_{s,t}(W_{s},\ldots,W_{t}) + \sum_{m=1}^{n}k_{t+m}.
	\end{align*}
	Similarly we have
	\[
	\rho_{s,t+n}(W_{s},\ldots,W_{t+n}) \ge \rho_{s,t}(W_{s},\ldots,W_{t}) - \sum_{m=1}^{n}k_{t+m},
	\]
	and we conclude that $\PP$-almost surely, the sequence $\{\rho_{s,t}(W_{s},\ldots,W_{t})\}_{t \in \mathbb{T}}$ is Cauchy.
	
\end{proof}

\begin{lem}\label{lem:vrho-rec}
	For all $W\in H_\mcF$ we have
	\[
	\varrho_{s}(W_{s},W_{s+1},\ldots) = \rho_{s,s+1}(W_{s}, \varrho_{s+1}(W_{s+1},W_{s+2},\ldots)).
	\]
\end{lem}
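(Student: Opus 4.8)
The plan is to reduce the infinite-horizon identity to the one-step recursivity of the finite-horizon aggregator and then pass to the limit. First I would invoke the recursivity property of $\{\rho_{s,t}\}$ from Lemma~\ref{Lemma:Basic-Properties-Aggregator} with intermediate time $r = s+1$, which gives, for every $t > s$,
\[
\rho_{s,t}(W_s,\ldots,W_t) = \rho_{s,s+1}\big(W_s, \rho_{s+1,t}(W_{s+1},\ldots,W_t)\big).
\]
Since $W \in H_\mcF$, the shifted sequence $(W_{s+1},W_{s+2},\ldots)$ also lies in $H_\mcF$ (a summable dominating sequence $\{k_t\}$ for $W$ dominates its shift as well), so both sides admit limits as $t \to \infty$ by Lemma~\ref{lem:vrho-limit}: the left-hand side tends to $\varrho_s(W_s,W_{s+1},\ldots)$, and the inner term $\rho_{s+1,t}(W_{s+1},\ldots,W_t)$ tends to $\varrho_{s+1}(W_{s+1},W_{s+2},\ldots)$, the latter being a member of $L^\infty_{\mcG_{s+1}}$ so that $\rho_{s,s+1}$ may legitimately be applied to it.

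The crux is to commute the outer map $\rho_{s,s+1}$ with the limit in its last argument. For this I would establish the elementary $1$-Lipschitz estimate that follows from monotonicity and conditional translation invariance: if $Z,Z' \in L^\infty_\mcF$ satisfy $|Z - Z'| \le c$ for a constant $c \ge 0$, then $Z' - c \le Z \le Z' + c$ yields $|\rho_t(Z) - \rho_t(Z')| \le c$ for every $t$. Writing $\rho_{s,s+1}(W_s,Z) = \rho_s(W_s + \rho_{s+1}(Z))$ and applying this bound twice, first through $\rho_{s+1}$ and then through $\rho_s$, gives
\[
\big|\rho_{s,s+1}(W_s,Z) - \rho_{s,s+1}(W_s,Z')\big| \le c.
\]

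Next I would supply the required deterministic rate for $c$. Re-reading the estimates in the proof of Lemma~\ref{lem:vrho-limit}, with $K_t \coloneqq \sum_{n \ge 0}k_{t+n}$ one has $|\rho_{s+1,t}(W_{s+1},\ldots,W_t) - \varrho_{s+1}(W_{s+1},W_{s+2},\ldots)| \le K_{t+1}$, where $K_{t+1} \to 0$. Combining this with the Lipschitz bound above (taking $Z = \rho_{s+1,t}(W_{s+1},\ldots,W_t)$ and $Z' = \varrho_{s+1}(W_{s+1},W_{s+2},\ldots)$, $c = K_{t+1}$) gives
\[
\big|\rho_{s,s+1}\big(W_s, \rho_{s+1,t}(W_{s+1},\ldots,W_t)\big) - \rho_{s,s+1}\big(W_s, \varrho_{s+1}(W_{s+1},W_{s+2},\ldots)\big)\big| \le K_{t+1} \to 0.
\]
Letting $t \to \infty$ in the recursivity identity then yields $\varrho_s(W_s,W_{s+1},\ldots) = \rho_{s,s+1}(W_s,\varrho_{s+1}(W_{s+1},W_{s+2},\ldots))$, as claimed.

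I expect the only genuine obstacle to be this interchange of limit and risk mapping. Monotonicity and conditional translation invariance make it routine once the uniform \emph{deterministic} convergence rate $K_{t+1}$ extracted from the proof of Lemma~\ref{lem:vrho-limit} is in hand; it is precisely the determinism (and $\omega$-independence) of this rate that lets a single almost-sure limit argument go through without any dominated-convergence subtleties.
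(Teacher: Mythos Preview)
Your proposal is correct and follows essentially the same route as the paper: both arguments start from the one-step recursivity of $\rho_{s,t}$, invoke the deterministic tail bound $K_t$ supplied by the proof of Lemma~\ref{lem:vrho-limit}, and then push this bound through $\rho_{s,s+1}$ using monotonicity together with conditional translation invariance. The only cosmetic difference is that the paper writes out the two one-sided inequalities separately and says ``similarly'' for the reverse, whereas you package both as a single $1$-Lipschitz estimate for $Z \mapsto \rho_{s,s+1}(W_s,Z)$; these are the same argument.
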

\begin{proof}
	Arguing as in the proof of Lemma \ref{lem:vrho-limit}, there is a deterministic positive sequence $\{K_t\}_{t\in\bbT}$, with $\lim_{t \to \infty} K_t = 0$, such that for every $0 \le s \le t$ we have
	\begin{align*}
		|\varrho_{s+1}(W_{s+1},W_{s+2},\ldots)-\rho_{s+1,t}(W_{s+1},\ldots,W_t)|\leq K_t \quad \text{a.s.}
	\end{align*}
	The monotonicity and conditional translation invariance of $\rho_{s+1,t}$ imply that
	\begin{align*}
		\rho_{s,s+1}(W_{s}, \varrho_{s+1}(W_{s+1},W_{s+2},\ldots))&\leq\rho_{s,s+1}(W_{s}, \rho_{s+1,t}(W_{s+1},\ldots,W_t)+K_t)
		\\
		&=\rho_{s,t}(W_{s},W_{s+1},\ldots,W_t)+K_t.
	\end{align*}
	Taking the limit as $t\to\infty$ we find that
	\begin{align*}
		\rho_{s,s+1}(W_{s}, \varrho_{s+1}(W_{s+1},W_{s+2},\ldots))\leq \varrho_{s}(W_{s}, W_{s+1},\ldots).
	\end{align*}
	A similar argument can be applied to find the reverse inequality.
\end{proof}

All of the properties in Lemma \ref{Lemma:Basic-Properties-Aggregator} for finite sequences extend to infinite sequences in $H_\mathcal{F}$ with $\varrho_{s}$ playing the role of $\rho_{s,\infty}$.

\subsection{Martingales for aggregated conditional risk mappings}\label{sec:Aggregated-Martingales}

We close by presenting elementary martingale theory for aggregated conditional risk mappings (see also \cite{Follmer2016,Kratschmer2010}).

Let $f = \{f_{t}\}_{t \in \mathbb{T}}$ be a sequence in $L^{\infty}_{\mathcal{F}}$. We say that $W \in \mcL_\mathbb{G}^{\infty}$ is an $f$-extended $\{\rho_{s,t}\}$-sub (-super) martingale if:
\[
W_{s} \le (\ge)\, \rho_{s,t}\big(f_{s},\ldots,f_{t-1},W_{t}\big), \quad 0 \le s \le t,
\]
and an $f$-extended $\{\rho_{s,t}\}$ martingale if it has both these properties. Note that we use the convention
\[
\rho_{s,t}\big(f_{s},\ldots,f_{t-1},W_{t}\big) = \rho_{t,t}(W_{t}) \;\; \text{if}\;\; s = t.
\]
If $f \equiv 0$ then the qualifier ``$f$-extended'' is omitted.

\begin{lem}\label{lem:One-Step-Submartingale-Property}
	The definitive property for an $f$-extended $\{\rho_{s,t}\}$-sub (-super) martingale $W$
	is equivalent to the one-step property,
	\[
	W_{t} \le (\ge)\, \rho_{t,t+1}(f_{t},W_{t+1}),\;\; t \in \mathbb{T}.
	\]
\end{lem}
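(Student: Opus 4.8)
The plan is to prove the two implications separately. The forward direction is immediate, and the reverse direction proceeds by backward induction using the \emph{recursivity} and \emph{monotonicity} properties of the aggregated mapping established in Lemma~\ref{Lemma:Basic-Properties-Aggregator}. I treat the submartingale case explicitly; the supermartingale case is identical with all inequalities reversed, and the martingale case follows by combining the two.

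First I would observe that the multi-step defining property trivially implies the one-step property: specialising the inequality $W_{s} \le \rho_{s,t}(f_{s},\ldots,f_{t-1},W_{t})$ to $t = s+1$ gives exactly $W_{s} \le \rho_{s,s+1}(f_{s},W_{s+1})$, which after relabelling is the asserted one-step inequality at each $t \in \mathbb{T}$.

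For the converse, I assume the one-step property holds at every $t \in \mathbb{T}$, fix an arbitrary terminal index $t$, and show $W_{s} \le \rho_{s,t}(f_{s},\ldots,f_{t-1},W_{t})$ for all $s \le t$ by backward induction on $s$. The base case $s = t$ holds with equality since $\rho_{t,t}(W_{t}) = W_{t}$. For the inductive step, suppose the claim holds at $s+1$, that is $W_{s+1} \le \rho_{s+1,t}(f_{s+1},\ldots,f_{t-1},W_{t})$. Applying recursivity with split point $r = s+1$ gives
\[
\rho_{s,t}(f_{s},\ldots,f_{t-1},W_{t}) = \rho_{s,s+1}\big(f_{s},\rho_{s+1,t}(f_{s+1},\ldots,f_{t-1},W_{t})\big),
\]
and monotonicity of $\rho_{s,s+1}$ in its final argument, combined with the inductive hypothesis, yields
\[
\rho_{s,s+1}(f_{s},W_{s+1}) \le \rho_{s,s+1}\big(f_{s},\rho_{s+1,t}(f_{s+1},\ldots,f_{t-1},W_{t})\big) = \rho_{s,t}(f_{s},\ldots,f_{t-1},W_{t}).
\]
The one-step property $W_{s} \le \rho_{s,s+1}(f_{s},W_{s+1})$ then closes the induction, and since $t$ was arbitrary the full multi-step property follows.

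Because the chain of inequalities is entirely routine once recursivity and monotonicity are available, I do not anticipate a genuine obstacle. The only points needing care are the correct bookkeeping of the cost arguments $f_{s},\ldots,f_{t-1}$ when invoking recursivity at $r = s+1$, and checking that every intermediate quantity—in particular $\rho_{s+1,t}(f_{s+1},\ldots,f_{t-1},W_{t}) \in L^{\infty}_{\mathcal{G}_{s+1}} \subset L^{\infty}_{\mathcal{F}}$—lies in the appropriate space so that the aggregated mapping is well defined at each step.
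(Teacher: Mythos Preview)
Your proposal is correct and follows essentially the same approach as the paper: both arguments chain the one-step inequalities together via the recursivity and monotonicity of $\rho_{s,t}$, with the paper peeling off the last step (splitting at $r=t-1$) while you peel off the first (splitting at $r=s+1$) in a formal backward induction. The only minor point worth tightening is your base case: $\rho_{t,t}(W_{t}) = \rho_{t}(W_{t}) = W_{t}$ holds because $W_{t} \in L^{\infty}_{\mathcal{G}_{t}}$ (since $W \in \mathcal{L}^{\infty}_{\mathbb{G}}$) together with conditional translation invariance and normalisation, which you may want to state explicitly.
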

\begin{proof}
	If $\{W_{t}\}_{t \in \mathbb{T}}$ is a one-step $f$-extended $\{\rho_{s,t}\}$-submartingale then for all $s,t \in \mathbb{T}$ such that $s < t$ we have
	\begin{align*}
		\rho_{s,t}\big(f_{s},\ldots,f_{t-1},W_{t}\big) & = \rho_{s,t-1}\big(f_{s},\ldots,f_{t-2},\rho_{t-1,t}(f_{t-1},W_{t})\big) \\
		& \ge \rho_{s,t-1}\big(f_{s},\ldots,f_{t-2},W_{t-1}\big) \ldots \ge W_{s}.
	\end{align*}
	
	The case $s = t$ and the converse implication that an $f$-extended $\{\rho_{s,t}\}$-submartingale satisfies the one-step property are both trivial and thus omitted.
\end{proof}

\begin{lem}[Doob Decomposition]\label{Lemma:Doob-Decomposition}
	Let $W \in \mcL_\mathbb{G}^{\infty}$. There exists an almost surely unique $\{\rho_{s,t}\}$-martingale $M$ and $\mathbb{G}$-predictable process $A$ such that $M_{0} = A_{0}$ and
	\begin{equation}\label{eq:Doob-Decomposition}
		W_{t} = W_{0} + M_{t} + A_{t}.
	\end{equation}
	The processes $A$ and $M$ are defined recursively as follows:
	\begin{align*}
		& \begin{cases}
			A_{0} = 0,\\
			A_{t+1} = A_{t} + \left(\rho_{t}(W_{t+1}) - W_{t}\right), \;\; t \in \mathbb{T},
		\end{cases} \\
		& \begin{cases}
			M_{0} = 0,\\
			M_{t+1} = M_{t} + \left(W_{t+1} - \rho_{t}(W_{t+1})\right), \;\; t \in \mathbb{T}.
		\end{cases}
	\end{align*}
	If $W$ is a $\{\rho_{s,t}\}$-sub (-super) martingale then $A$ is increasing (decreasing).
\end{lem}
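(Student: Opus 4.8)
The plan is to verify directly that the recursively-defined pair $(M,A)$ has the stated properties, and then to prove uniqueness by exploiting conditional translation invariance together with the martingale property; the latter is where the only real subtlety lies.

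First I would confirm the decomposition \eqref{eq:Doob-Decomposition}. Adding the two recursions gives the telescoping identity $\Delta M_{t+1} + \Delta A_{t+1} = (W_{t+1} - \rho_t(W_{t+1})) + (\rho_t(W_{t+1}) - W_t) = W_{t+1} - W_t$, where $\Delta X_{t+1} \coloneqq X_{t+1} - X_t$, so summing from $M_0 = A_0 = 0$ yields $M_t + A_t = W_t - W_0$, which is \eqref{eq:Doob-Decomposition}. Measurability follows by induction: since $W_t \in L^\infty_{\mcG_t}$ and $\rho_t(W_{t+1}) \in L^\infty_{\mcG_t}$, the increment $\Delta A_{t+1} = \rho_t(W_{t+1}) - W_t$ is $\mcG_t$-measurable, so $A$ is $\mathbb{G}$-predictable; then $M_t = W_t - W_0 - A_t$ is $\mcG_t$-measurable, so $M$ is $\mathbb{G}$-adapted, and boundedness of both processes is inherited from that of $W$ and the fact that $\rho_t$ maps into $L^\infty_{\mcG_t}$.

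Next I would check that $M$ is a $\{\rho_{s,t}\}$-martingale. By Lemma~\ref{lem:One-Step-Submartingale-Property} it suffices to verify the one-step identity $\rho_t(M_{t+1}) = M_t$. Writing $M_{t+1} = M_t - \rho_t(W_{t+1}) + W_{t+1}$ and noting that $M_t$ and $\rho_t(W_{t+1})$ are $\mcG_t$-measurable, conditional translation invariance gives $\rho_t(M_{t+1}) = M_t - \rho_t(W_{t+1}) + \rho_t(W_{t+1}) = M_t$. The sub/supermartingale claim is then immediate: again by Lemma~\ref{lem:One-Step-Submartingale-Property}, $W$ is a submartingale (resp.\ supermartingale) precisely when $W_t \le \rho_t(W_{t+1})$ (resp.\ $\ge$) for all $t$, which is exactly the condition $\Delta A_{t+1} = \rho_t(W_{t+1}) - W_t \ge 0$ (resp.\ $\le 0$), i.e.\ that $A$ is increasing (resp.\ decreasing).

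The hard part will be uniqueness, since the nonlinearity of $\rho_t$ blocks the usual argument of subtracting two martingales. Instead I would argue that the recursion for $A$ is forced. Suppose $(M',A')$ is any admissible pair with $W_t = W_0 + M'_t + A'_t$. Then $W_{t+1} = (W_t + \Delta A'_{t+1}) + \Delta M'_{t+1}$, where $W_t + \Delta A'_{t+1}$ is $\mcG_t$-measurable by predictability of $A'$; applying $\rho_t$ and using conditional translation invariance gives $\rho_t(W_{t+1}) = W_t + \Delta A'_{t+1} + \rho_t(\Delta M'_{t+1})$. Since $M'_t \in L^\infty_{\mcG_t}$, translation invariance and the martingale property yield $\rho_t(\Delta M'_{t+1}) = \rho_t(M'_{t+1}) - M'_t = 0$, so $\Delta A'_{t+1} = \rho_t(W_{t+1}) - W_t$. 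Thus $A'$ obeys the same recursion as $A$ from the same initial value $A'_0 = 0$, hence $A' = A$ up to indistinguishability, and the relation $M'_t = W_t - W_0 - A'_t$ then forces $M' = M$ as well, completing the proof.
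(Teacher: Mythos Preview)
Your proof is correct and complete. The paper does not give its own argument here but simply cites Lemma~5.1 of Kr\"atschmer--Schoenmakers; your direct verification---telescoping for existence, the one-step identity via conditional translation invariance for the martingale property, and forcing the recursion for $A'$ to establish uniqueness---is exactly the standard argument that reference contains, so there is nothing to compare.
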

\begin{proof}
	Proved in the same way as Lemma 5.1 of \cite{Kratschmer2010}.
\end{proof}
\subsubsection{Optional stopping properties.}
First let $\tau \in \mathscr{T}$ be a stopping time. For sequences $\{f_t\}_{t \in \mathbb{T}}$ and $\{W_t\}_{t \in \mathbb{T}}$ in $H_\mcF$ define the aggregated cost $\rho_{t,\tau}(f_t,\ldots,f_{\tau-1},W_{\tau})$ as
\begin{equation}\label{eq:stopped-c-aggregated-cost}
	\rho_{t,\tau}(f_t,\ldots,f_{\tau-1},W_{\tau}) =
	\begin{cases}
		0,& \text{on} \enskip \{\tau < t\},\\
		\rho_{t}(W_{t}),& \text{on} \enskip \{\tau = t\},\\
		\rho_{t}\big(f_t+\rho_{t+1,\tau}(f_{t+1}),\ldots,f_{\tau-1},W_{\tau})\big),& \text{on} \enskip \{\tau > t\}.
	\end{cases}
\end{equation}

Given another stopping time $\varsigma \in \mathscr{T}$,
define the aggregated cost \\ $\rho_{\varsigma,\tau}(f_{\varsigma},\ldots,f_{\tau-1},W_{\tau})$ as
\begin{equation}\label{eq:doubly-stopped-aggregated-cost}
	\begin{split}
		\rho_{\varsigma,\tau}(f_{\varsigma},\ldots,f_{\tau-1},W_{\tau}) & = \sum_{t \in \mathbb{T}}\mathbf{1}_{\{\varsigma = t\}}\rho_{t,\tau}(f_{t},\ldots,f_{\tau-1},W_{\tau}) \\
		& = \begin{cases}
			0,& \text{on} \enskip \{\tau < \varsigma\},\\
			\rho_{\varsigma}(W_{\varsigma}),& \text{on} \enskip \{\tau = \varsigma\},\\
			\rho_{\varsigma}\big(f_{\varsigma}+\rho_{\varsigma+1,\tau}(f_{\varsigma+1},\ldots,f_{\tau-1},W_{\tau})\big),& \text{on} \enskip \{\tau > \varsigma\}.
		\end{cases}
	\end{split}
\end{equation}

Without loss of generality we can assume $\tau \ge t$ and $ \tau \ge \varsigma$ in \eqref{eq:stopped-c-aggregated-cost} and \eqref{eq:doubly-stopped-aggregated-cost} respectively. The following lemma shows that the recursive property of aggregated conditional risk mappings extends to stopping times.
\begin{lem}\label{lem:Recursive-Optional-Stopping}
	If $\varsigma$, $\tilde{\varsigma}$ and $\tau$ are \emph{bounded} stopping times in $\mathscr{T}$ such that $\varsigma \le \tilde{\varsigma} \le \tau$, then for all sequences $\{f_{t}\}_{t \in \mathbb{T}}$ and $\{W_{t}\}_{t \in \mathbb{T}}$ in $\mcL^\infty_{\mathcal{F}}$ we have
	\[
	\rho_{\varsigma,\tau}(f_{\varsigma},\ldots,f_{\tau-1},W_{\tau}) = \rho_{\varsigma,\tilde{\varsigma}}\big(f_{\varsigma},\ldots,f_{\tilde{\varsigma} - 1},\rho_{\tilde{\varsigma},\tau}(f_{\tilde{\varsigma}},\ldots,f_{\tau-1},W_{\tau})\big).
	\]
\end{lem}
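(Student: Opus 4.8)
The plan is to reduce the optional identity to the deterministic recursivity property of Lemma~\ref{Lemma:Basic-Properties-Aggregator} by peeling off one time increment at a time and localising with the conditional locality property. Throughout I fix a common bound $N$ with $\varsigma \le \tilde{\varsigma} \le \tau \le N$, and I use repeatedly that each $\rho_t$ restricts to the identity on $L^{\infty}_{\mcG_t}$ (by normalisation together with conditional translation invariance applied with $W = 0$), so that $\rho_t$ and hence $\rho_{s,t}$ act as projections on already-measurable data.

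First I would reduce to the case of a deterministic outer time. Writing $\rho_{\varsigma,\tau}(f_{\varsigma},\ldots,f_{\tau-1},W_{\tau}) = \sum_{a}\ett_{\{\varsigma = a\}}\rho_{a,\tau}(f_{a},\ldots,f_{\tau-1},W_{\tau})$ as in \eqref{eq:doubly-stopped-aggregated-cost}, and doing the same with the right-hand side (whose inner argument $\rho_{\tilde{\varsigma},\tau}(f_{\tilde{\varsigma}},\ldots,f_{\tau-1},W_{\tau})$ is a fixed bounded random variable not depending on $\varsigma$), it suffices by conditional locality, using $\{\varsigma = a\} \in \mcG_a$, to prove for each deterministic $a \le N$ and all stopping times $a \le \tilde{\varsigma} \le \tau \le N$ that
\[
\rho_{a,\tau}(f_a,\ldots,f_{\tau-1},W_\tau) = \rho_{a,\tilde{\varsigma}}\big(f_a,\ldots,f_{\tilde{\varsigma}-1},\rho_{\tilde{\varsigma},\tau}(f_{\tilde{\varsigma}},\ldots,f_{\tau-1},W_\tau)\big).
\]

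I would establish this by backward induction on $a$, from $a = N$ down to $a = 0$. In the base case $a = N$ all three times equal $N$, and both sides collapse to $\rho_N(W_N)$. For the inductive step I would unfold the defining recursion \eqref{eq:stopped-c-aggregated-cost} at time $a$ on the two $\mcG_a$-measurable events $\{\tau = a\}$ and $\{\tau > a\}$, and on the latter localise further over $\{\tilde{\varsigma} = a\}$ and $\{\tilde{\varsigma} > a\}$, which again lie in $\mcG_a$ since $\tilde{\varsigma}$ is a $\mathbb{G}$-stopping time. On $\{\tilde{\varsigma} = a\}$ the inner cost $\rho_{\tilde{\varsigma},\tau}$ already starts at $a$, so the right-hand side becomes $\rho_a(\rho_{a,\tau}(\cdots))$, which equals $\rho_{a,\tau}(\cdots)$ by idempotency on $\mcG_a$-measurable variables; on $\{\tilde{\varsigma} > a\}$ both sides expose the same leading term $\rho_a(f_a + \,\cdot\,)$, and the two continuation values agree by the induction hypothesis applied at time $a+1$ to the stopping times $\tilde{\varsigma},\tau \ge a+1$. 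Reassembling the pieces with conditional locality then yields the identity at $a$.

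The hard part will be the measurability bookkeeping rather than any analytic estimate: every localisation must be performed only on events lying in $\mcG_a$. The crucial point is that while $\{\tilde{\varsigma} \le a\} \in \mcG_a$ lets one split according to whether the intermediate time has already been reached, the events $\{\tau = c\}$ for $c > a$ are \emph{not} in $\mcG_a$, so the claim cannot be obtained by a single reduction to purely deterministic endpoints. This is precisely why the step-by-step induction—peeling one time increment and invoking the deterministic recursivity of Lemma~\ref{Lemma:Basic-Properties-Aggregator} one layer at a time—is needed, and some care is required to check, on each localising event, that the relevant stopping times genuinely take values in the asserted range before the induction hypothesis is applied.
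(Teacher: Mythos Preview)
Your proposal is correct and follows essentially the same route as the paper: reduce the outer stopping time $\varsigma$ to a deterministic $t$ via the sum in \eqref{eq:doubly-stopped-aggregated-cost}, then prove the identity on $\{\tilde{\varsigma} \ge t\}$ by backward induction on $t$, splitting into $\{\tilde{\varsigma} = t\}$ (where idempotency of $\rho_t$ on $L^{\infty}_{\mcG_t}$ closes the case) and $\{\tilde{\varsigma} > t\}$ (where the recursion exposes $\rho_t(f_t + \cdot)$ on both sides and the induction hypothesis applies). Your additional preliminary split on $\{\tau = a\}$ versus $\{\tau > a\}$ is harmless but not needed, since on $\{\tilde{\varsigma} = a\}$ the idempotency argument already covers both subcases at once.
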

\begin{proof}
	Since $\tau$ is bounded it follows that $\tau \in \mathscr{T}_{[0,T]}$ for some integer $0 < T < \infty$. Furthermore, by \eqref{eq:doubly-stopped-aggregated-cost} it suffices to prove for all $0 \le t \le T$ that
	\begin{align}\label{eq:sem1}
		\mathbf{1}_{\{\tilde{\varsigma} \ge t\}}\rho_{t,\tilde{\varsigma}}\big(f_{t},\ldots,f_{\tilde{\varsigma} - 1},\rho_{\tilde{\varsigma},\tau}(f_{\tilde{\varsigma}},\ldots,f_{\tau - 1},W_{\tau})\big) = \mathbf{1}_{\{\tilde{\varsigma} \ge t\}}\rho_{t,\tau}(f_{t},\ldots,f_{\tau-1},W_{\tau}).
	\end{align}
	By decomposing $\{\tilde{\varsigma} \ge t\}$ into the disjoint events $\{\tilde{\varsigma} = t\}$ and $\{\tilde{\varsigma} \ge t+1\}$
	we have
	\begin{align*}
		{} & \mathbf{1}_{\{\tilde{\varsigma} \ge t\}}\rho_{t,\tilde{\varsigma}}\big(f_{t},\ldots,f_{\tilde{\varsigma} - 1},\rho_{\tilde{\varsigma},\tau}(f_{\tilde{\varsigma}},\ldots,f_{\tau - 1},W_{\tau})\big) \\ = {} & \mathbf{1}_{\{\tilde{\varsigma} = t\}}\rho_{t,\tau}(f_{t},\ldots,f_{\tau-1},W_{\tau}) \\
		& + \mathbf{1}_{\{\tilde{\varsigma} \ge t+1\}}\rho_{t,t+1}\big(f_{t},\rho_{t+1,\tilde{\varsigma}}\big(f_{t+1},\ldots,f_{\tilde{\varsigma} - 1}, \rho_{\tilde{\varsigma},\tau}(f_{\tilde{\varsigma}},\ldots,f_{\tau - 1},W_{\tau})\big).
	\end{align*}
	If $t < T$ and if \eqref{eq:sem1} holds with $t+1$ in place of $t$ then using conditional translation invariance we get
	\begin{align*}
		{} & \mathbf{1}_{\{\tilde{\varsigma} \ge t\}}\rho_{t,\tilde{\varsigma}}\big(f_{t},\ldots,f_{\tilde{\varsigma} - 1},\rho_{\tilde{\varsigma},\tau}(f_{\tilde{\varsigma}},\ldots,f_{\tau - 1},W_{\tau})\big) \\ = {} & \mathbf{1}_{\{\tilde{\varsigma} = t\}}\rho_{t,\tau}(f_{t},\ldots,f_{\tau-1},W_{\tau}) \\
		& + \mathbf{1}_{\{\tilde{\varsigma} \ge t+1\}}\rho_{t,t+1}\big(f_{t},\rho_{t+1,\tilde{\varsigma}}\big(f_{t+1},\ldots,f_{\tilde{\varsigma} - 1},\rho_{\tilde{\varsigma},\tau}(f_{\tilde{\varsigma}},\ldots,f_{\tau - 1},W_{\tau})\big) \\
		= {} & \mathbf{1}_{\{\tilde{\varsigma} = t\}}\rho_{t,\tau}(f_{t},\ldots,f_{\tau-1},W_{\tau}) \\
		& + \mathbf{1}_{\{\tilde{\varsigma} \ge t+1\}}\rho_{t,t+1}\big(f_{t},\rho_{t+1,\tau}(f_{t+1},\ldots,f_{\tau-1},W_{\tau})\big) \\
		= {} & \mathbf{1}_{\{\tilde{\varsigma} \ge t\}}\rho_{t,\tau}(f_{t},\ldots,f_{\tau-1},W_{\tau}),
	\end{align*}
	and we conclude using backward induction.
\end{proof}


\end{document}